\newtheorem{theorem}{Theorem}[section]
\newtheorem*{theorem*}{Theorem}
\newtheorem{lemma}[theorem]{Lemma}
\newtheorem{proposition}[theorem]{Proposition}
\newtheorem{corollary}[theorem]{Corollary}
\newtheorem*{conjecture*}{Conjecture}
\newtheorem{question}[theorem]{Question}
\newtheorem*{question*}{Question}
\theoremstyle{remark}
\newtheorem{remark}[theorem]{Remark}
\newtheorem{example}[theorem]{Example}
\theoremstyle{definition}
\newtheorem{definition}[theorem]{Definition}
\newcommand{\ie}{{\em i.e.~}\ }
\newcommand{\opname}[1]{\operatorname{\mathsf{#1}}}
\renewcommand{\mod}{\opname{mod}\nolimits}
\newcommand{\proj}{\opname{proj}\nolimits}
\newcommand{\pd}{\opname{pd}\nolimits}
\newcommand{\id}{\opname{id}\nolimits}
\newcommand{\thick}{\opname{thick}\nolimits}
\newcommand{\Hom}{\opname{Hom}}
\newcommand{\End}{\opname{End}}
\newcommand{\Ext}{\opname{Ext}}
\newcommand{\ca}{{\mathcal A}}
\newcommand{\cb}{{\mathcal B}}
\newcommand{\cc}{{\mathcal C}}
\newcommand{\cd}{{\mathcal D}}
\newcommand{\cq}{{\mathcal Q}}
\newcommand{\ct}{{\mathcal T}}
\numberwithin{equation}{section}
\begin{document}

\title[silted algebras of hereditary algebras]{Classification of silted algebras for two quivers of Dynkin type $\mathbb{D}_{n}$}

\author{Houjun Zhang}
\thanks{MSC2020: 16G99, 16E35, 16E45.}
\thanks{Key words: tilted algebra, 2-term silting complex, silted algebra, strictly shod algebra, string algebra, realization functor}

\address{Houjun Zhang, School of Science, Nanjing University of Posts and Telecommunications, Nanjing 210023, P. R. China
}

\email{zhanghoujun@njupt.edu.cn}

\begin{abstract}
Let $Q$ be the Dynkin quiver of type $\mathbb{D}_{n}$ with linear orientation and let $Q'$ be the quiver formed by reversing the arrow at the unique source in $Q$. In this paper, we present a complete classification of both silted algebras and strictly shod algebras associated with these two quivers. Based on the classification, we derive formulas for counting the number of silted algebras and strictly shod algebras. 
 Furthermore, we establish that all strictly shod algebras corresponding to $Q$ and $Q'$ are string algebras. As an application, we provide a way to construct examples such that the realization functor which is induced from the $t$-structure does not extend to a derived equivalence. 
\end{abstract}
\maketitle

%\tableofcontents

\section{Introduction}\label{s:introduction}
\medskip
As a generalization of tilted algebras, Buan and Zhou \cite{BuanZhou16b} introduced the concept of silted algebras in 2016, which are defined as endomorphism algebras of 2-term silting complexes. Silted algebras serve as a crucial bridge connecting hereditary algebras and derived categories. Their importance stems from the deep connection between their module categories and those of hereditary algebras: via torsion pair equivalences, the module category of a silted algebra decomposes into components equivalent to those of two subcategories within the hereditary algebra’s module category. This is similar to classical tilting theory but includes more types of algebras, such as those not induced by tilting modules, see \cite{BuanZhou16,BuanZhou16b} (also \cite{XieYangZhang23}).

Let $\mathcal{A}$ be an abelian category and let $\mathcal{B}$ denote the heart of the bounded derived category $\mathcal{D}^{b}(\mathcal{A})$. By \cite[Th\'{e}or\`{e}me 1.3.6]{BeilinsonBernsteinDeligne82}, $\mathcal{B}$ is also abelian, and there exists an embedding map from $\mathcal{B}$ to $\mathcal{D}^{b}(\mathcal{A})$. Beilinson, Bernstein and Deligne \cite{BeilinsonBernsteinDeligne82} extended this functor to a triangle functor, which is called a realization functor.
Recently, Martin Kalck observed an intriguing phenomenon: $\mathcal{A}$ and $\mathcal{B}$ are derived equivalent, but the embedding map from $\mathcal{B}$ to $\mathcal{D}^{b}(\mathcal{A})$ does not extend to a derived equivalence. This phenomenon was investigated by Yang in \cite{Yang20}. Furthermore, it was observed to arise in the silted algebras of a hereditary algebra of Dynkin type $\mathbb{D}_{5}$ \cite{Xing21}. 

Silted algebras are an important class of algebras, through research into them remains limited currently. One of the most significant results on silted algebras was established by Buan and Zhou \cite{BuanZhou16b}, who proved that a silted algebra is either a tilted algebra or a strictly shod algebra (\ie shod algebra of global dimension $3$). For hereditary algebras of finite representation type (e.g., Dynkin types $\mathbb{A}$, $\mathbb{D}$, $\mathbb{E}$), the number of isomorphism classes of basic 2-term silting complexes is finite \cite{ObaidNaumanFakiehRingel15}. We thus focus on classifying (up to isomorphism) basic silted algebras of Dynkin type, along with the strictly shod algebras among them. Moreover, by classifying silted algebras, can we derive additional examples exhibiting this phenomenon?

In \cite{XieYangZhang25}, we provided a classification of silted algebras for two hereditary algebras of Dynkin type $\mathbb{A}_{n}$ and showed that there are no strictly shod algebras among them. Furthermore, we proved that there are no strictly shod algebras for the path algebra of any quiver of type $\mathbb{A}$, (also see \cite{ZhangLiu25}).

In this paper, we extend our study to silted algebras of hereditary algebras of Dynkin type $\mathbb{D}_{n}$. Precisely, let $Q$ be the quiver of type $\mathbb{D}_{n}$ with linear orientation, \ie
$$Q=\begin{xy}
(-10,5)*+{1}="0",
(-10,-5)*+{2}="1",
(0,0)*+{3}="2",
(12,0)*+{\cdots}="3",
(28,0)*+{n-1}="4",
(42,0)*+{n}="5",
\ar"2";"0",\ar"2";"1", \ar"3";"2", \ar"4";"3", \ar"5";"4",
\end{xy}$$
and set $\Lambda_{n}=kQ$.
Let $Q'$ be the quiver obtained from $Q$ by reversing the arrow at the unique source, \ie
$$Q'=\begin{xy}
(-10,5)*+{1}="0",
(-10,-5)*+{2}="1",
(0,0)*+{3}="2",
(12,0)*+{\cdots}="3",
(28,0)*+{n-1}="4",
(42,0)*+{n}="5",
\ar"2";"0",\ar"2";"1", \ar"3";"2", \ar"4";"3", \ar"4";"5",
\end{xy}$$
and set $\Gamma_{n}=kQ'$.
We classify the basic silted algebras of type $\Lambda_{n}$ and type $\Gamma_{n}$, by introducing the concept of effective intersection, we further classify the strictly shod algebras among them. Based on the classification, we obtain some formulas for counting the number of these silted algebras and strictly shod algebras. Additionally, we show that all strictly shod algebras of type $\Lambda_{n}$ and type $\Gamma_{n}$ are string algebras. As a consequence, we obtain numerous examples such that the realization functor which is induced from the $t$-structure does not extend to a derived equivalence.

The paper is organized as follows. In Section~\ref{s:preliminaries}, we recall the notions of tilted algebras and silted algebras.  In Section ~\ref{s:classification-of-2-term-silting}, we provide a classification of the 2-term silting complexes over $\Lambda_n$. In Section~\ref{s:silted-algebra-of-A_{n}}, we study the silted algebras of type $\Lambda_n$: we first review some basic results on silted algebras of type $A_n$, the path algebra of quiver $$\begin{xy}
(-10,0)*+{1}="1",
(0,0)*+{2}="2",
(12,0)*+{\cdots}="3",
(28,0)*+{n-1}="4",
(42,0)*+{n}="5",
\ar"2";"1", \ar"3";"2", \ar"4";"3", \ar"5";"4",
\end{xy}$$ and then give a classification of silted algebras of type $\Lambda_n$ based on the classification of 2-term silting in Section ~\ref{s:classification-of-2-term-silting}. Furthermore, by introducing the concept of effective intersection, we determine the global dimension of silted algebras and establish a classification of strictly shod algebras. Using this classification, we also obtain some formulas for counting the number of these silted algebras and strictly shod algebras, and prove that all strictly shod algebras are string algebras. Based on the classification of silted algebras of type $\Lambda_n$, we give a complete classification of silted algebras of type $\Gamma_n$ in Section~\ref{s:silted-algebra-mutated-orientation}. In the final Section, we present a method for constructing classes of examples exhibiting the phenomenon that the realization functor which is induced from the $t$-structure does not extend to a derived equivalence.

\medskip
\noindent\emph{Notations and conventions.} Throughout, let $k$ be an algebraically closed field and all algebras are finite-dimensional $k$-algebras. For a $k$-algebra $A$, let $\mod A$ denote the category of all finite-dimensional right $A$-modules. Let $K^{b}(\proj A)$ be the bounded homotopy category of finitely generated projective $A$-modules,  and let $\tau$ denote the Auslander--Reiten translation in both $\mod A$ and in $K^b(\proj A)$. Let $K^{[-1,0]}(\proj A)$ be the full subcategory of $K^{b}(\proj A)$ consisting of complexes concentrated in degrees $-1$ and $0$. For any $A$-module $X$, $\pd X$ (resp., $\id X$) denotes the projective (resp., injective) dimension of $X$, and $\mathrm{gl.dim} A$ denotes the global dimension of $A$. We further denote by $t(A)$ the number of basic tilting modules over $A$, by $a_{t}(A)$ the number of tilted algebras of type $A$ and by $a_{s}(A)$ the number of silted algebras of type $A$.
The symbol $|X|$ denotes the number of non-isomorphic indecomposable direct summands of $X$.
For a Dynkin quiver $Q$ and vertex $i$ of $Q$, $P(i)$, $I(i)$ and $S(i)$ denote the indecomposable projective, indecomposable injective and simple modules at $i$, respectively.

\medskip
\noindent\emph{Acknowledgement}. {The author acknowledges support by National Natural Science Foundation of China No. 12301051 and by Natural Science Research Start-up Foundation
of Recruiting Talents of Nanjing University of Posts and Telecommunications No. NY222092. He is deeply grateful to Dong Yang for his consistent encouragement, support and guidance.} 

\section{Preliminaries}
\label{s:preliminaries}
In this section, we recall some related fundamental materials on tilting theory and silting theory. We refer the reader to \cite{AssemSimsonSkowronski06,KellerVossieck88,AiharaIyama12,AdachiIyamaReiten14,BuanZhou16b,HappelRingel82} for more details. Let $A$ be an algebra. 

\subsection{Tilted algebras}

\begin{definition}
A module $T\in \mod A$ is called a {\it tilting} module, if it satisfies the following three conditions:
 \begin{itemize}
\item[(1)] $\pd_A T \leq 1 $.
\item[(2)] $\Ext _{A}^{1} (T,T) =0$.
\item[(3)] $|T|=|A|$.
\end{itemize}
\end{definition}

\begin{definition}
Assume that $A$ is hereditary. An algebra $B$ is said to be {\it tilted} of type $A$ if there exists a tilting module $T$ over $A$ such that $B=\End_{A}(T)$.
\end{definition}

As a generalization of tilting modules, Adachi, Iyama and Reiten \cite{AdachiIyamaReiten14} introduced $\tau$-tilting theory.

\begin{definition}
Let $M$ be an $A$-module.
\begin{itemize}
\item[(1)] $M$ is called {\it $\tau$-rigid} if $\Hom_{A}(M,\tau M)=0$, and
$M$ is called {\it $\tau$-tilting} if $M$ is $\tau$-rigid and $|M|=|A|$.
\item[(2)] $M$ is called {\it support $\tau$-tilting} if there exists an idempotent
$e$ of $A$ such that $M$ is a $\tau$-tilting $A/ \langle e \rangle$-module.
\end{itemize}
\end{definition}

%Denote by $\mathrm{s\tau}$-$\mathrm{tilt}A$ the set of isomorphism classes of basic support $\tau$-tilting $A$-modules.

\begin{remark}\label{rem:number-of-tilting-and-support-tilting}
In \cite{ObaidNaumanFakiehRingel15}, the authors gave the number of tilting modules and support $\tau$-tilting
modules for any Dynkin algebra.
\end{remark}

\subsection{Silted algebras}

\begin{definition}\label{def:silting-objects}
Let $P$ be a complex in  $K^{b}(\proj A)$.
 \begin{itemize}
\item[(1)] $P$ is called {\it presilting} if $\Hom_{K^{b}(\proj A)}(P,P[i])=0$ for $i>0$.
\item[(2)] $P$ is called {\it silting} if it is presilting and generates $K^{b}(\proj A)$ as a triangulated category.
\item[(3)] $P$ is called {\it tilting} if $\Hom_{K^{b}(\proj A)}(P,P[i])=0$ for $i\neq 0$ and generates $K^{b}(\proj A)$ as a triangulated category.
\item[(4)] $P$ is called {\it 2-term} if it only has non-zero terms in degrees $-1$ and $0$, \ie $P\in K^{[-1,0]}(\proj A)$.
\end{itemize}
\end{definition}

%We denote by $\mathrm{2}$-$\mathrm{silt}A$ the set of isomorphism classes of basic 2-term silting complexes over $A$. 
It should be noted that a 2-term presilting complex $P$ in $K^{b}(\proj A)$ is silting if and only if $|P|=|A|$(see \cite[Proposition 3.14]{BrustleYang13}). Tilting modules are 2-term silting complexes. As a generalization of tilted algebras, in \cite{BuanZhou16b}, Buan and Zhou introduced silted algebras.

\begin{definition}\label{def:silted-algebras}
Assume that $A$ is hereditary. An algebra $B$ is called {\it silted} of type $A$ if there exists a 2-term silting complex $P$ over $A$ such that $B=\End_{K^{b}(\proj A)} (P)$.
\end{definition}

Next we recall the definition of strictly shod algebras in \cite{CoelhoLanzilotta99}.

\begin{definition}\label{def:strictly-shod-algebras}
An algebra $A$ is called {\it shod} (for small homological dimension) if for each indecomposable $A$-module $X$, either $\pd X\leq 1$ or $\id X\leq 1$. $A$ is called {\it strictly shod} if it is shod and $\mathrm{gl.dim} A=3$.
\end{definition}

Note that tilted algebras are silted. Moreover, any silted algebras is shod. In particular, Buan and Zhou \cite[Theorem 2.13]{BuanZhou16b} obtained the following result. 

\begin{theorem} \label{thm: silted-algebra-to-strictly-shod-algebra}
Let $A$ be a connected algebra. Then the following are equivalent.
\begin{itemize}
\item[(1)] $A$ is a silted algebra;

\item[(2)] $A$ is a tilted algebra or a strictly shod algebra.
\end{itemize}
\end{theorem}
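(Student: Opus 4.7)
The plan is to establish both implications separately, following the approach of Buan--Zhou.

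For $(2)\Rightarrow(1)$: If $A$ is tilted of type $H$ via a tilting module $T$, then $T$ regarded as a stalk complex in degree $0$ lies in $K^{[-1,0]}(\proj H)$, and the tilting axioms $\Ext^1_H(T,T)=0$ and $|T|=|H|$ translate directly into the 2-term silting conditions of Definition~\ref{def:silting-objects}; hence $A\cong \End_{K^b(\proj H)}(T)$ is silted. The strictly shod case is more delicate: from $A$ one must reconstruct a hereditary $H$ and a 2-term silting complex $P\in K^{[-1,0]}(\proj H)$ with $\End(P)\cong A$. The idea is to exploit the split torsion pair $(\mathcal{X},\mathcal{Y})$ on $\mod A$ coming from the shod property (with $\mathcal{X}=\{X:\id_A X\leq 1\}$ and $\mathcal{Y}=\{Y:\pd_A Y\leq 1\}$), together with the constraint $\gldim A = 3$, to identify a ``hereditary core'' $H$ and to assemble the terms $P^{-1}$ and $P^0$ from the distinguished indecomposables in $\mathcal{X}$ and $\mathcal{Y}$, checking that the resulting $P$ generates $K^b(\proj H)$.

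For $(1)\Rightarrow(2)$: Let $A=\End_H(P)$ with $P$ a 2-term silting complex over a hereditary algebra $H$. The main input is that $P$ determines a bounded t-structure on $\mathcal{D}^b(\mod H)\simeq K^b(\proj H)$ whose heart $\mathcal{H}$ is equivalent to $\mod A$. Because $H$ is hereditary, by the Happel--Reiten--Smal\o\ construction this heart arises by tilting the standard heart along a torsion pair $(\mathcal{T},\mathcal{F})$ in $\mod H$; every object of $\mathcal{H}$ fits in a triangle with $H^0\in\mathcal{T}$ and $H^{-1}\in\mathcal{F}$. Hereditarity of $H$ then forces $\Ext^i_A(M,N)=0$ for $i\geq 4$, so $\gldim A\leq 3$. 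For the shod property, I would verify that each indecomposable of $\mathcal{H}$ lies either in $\mathcal{T}$ (as a degree-$0$ object) or in $\mathcal{F}[1]$; a direct $\Hom$-computation in $\mathcal{D}^b(\mod H)$, using that $H$ is hereditary, shows that the first class corresponds to indecomposable $A$-modules with $\pd_A\leq 1$ and the second to indecomposables with $\id_A\leq 1$. Thus $A$ is shod, and combined with $\gldim A\leq 3$ one gets the dichotomy: either $\gldim A\leq 2$ so that $A$ is tilted, or $\gldim A=3$ so that $A$ is strictly shod.

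The main obstacle is the strictly shod sub-case of $(2)\Rightarrow(1)$: reconstructing a hereditary $H$ and a 2-term silting complex $P$ from an abstract strictly shod algebra $A$ is delicate, because one must not only produce $P$ as a presilting complex but also verify that it generates $K^b(\proj H)$, which uses both the split shod torsion pair and the exact value $\gldim A=3$ in an essential way. The direction $(1)\Rightarrow(2)$ is conceptually cleaner once the Koenig--Yang style correspondence between silting complexes and bounded t-structures is in place, the only subtle point there being the careful bookkeeping of homological dimensions through the tilt.
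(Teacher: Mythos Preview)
The paper does not give its own proof of this theorem: it is stated as a result of Buan and Zhou and attributed to \cite[Theorem 2.13]{BuanZhou16b}, with no argument beyond the preceding remark that tilted algebras are silted and silted algebras are shod. So there is no in-paper proof to compare against; your proposal is effectively a sketch of how the Buan--Zhou argument might go.

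That said, your sketch of $(1)\Rightarrow(2)$ has a genuine gap at the final step. From ``$A$ is shod with $\gldim A\leq 3$'' you conclude: if $\gldim A\leq 2$ then $A$ is tilted. But shod with global dimension at most $2$ only gives you \emph{quasi-tilted}, and there exist connected quasi-tilted algebras (those of canonical type) that are not tilted. You therefore need an extra ingredient to rule these out. One way is to observe that if the 2-term silting complex $P$ over the hereditary $H$ satisfies $\Hom(P,P[-1])=0$, then $P$ is in fact a tilting complex, and the endomorphism algebra of a tilting complex over a hereditary algebra is tilted (cf.\ Corollary~\ref{cor:tilting complex to tilted algebra} in the paper); conversely one must show that $\Hom(P,P[-1])\neq 0$ forces $\gldim \End(P)=3$. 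Neither direction is covered by your HRS-tilt bookkeeping alone, and this dichotomy is really the heart of the Buan--Zhou argument. Your handling of $(2)\Rightarrow(1)$ in the strictly shod case is, as you acknowledge, only an outline; constructing the hereditary $H$ and the silting object from an abstract strictly shod $A$ is substantial and is where most of the work in \cite{BuanZhou16b} lies.
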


\begin{remark}
In \cite[Theorem 3.2]{AdachiIyamaReiten14}, Adachi,Iyama and Reiten showed that there is a bijection between the
set of isomorphism classes of basic 2-term silting complexes over A and the set of isomorphism
classes of basic support $\tau$-tilting $A$-modules. Thus, by Remark \ref{rem:number-of-tilting-and-support-tilting}, we can obtain the number of basic 2-term silting complexes over any Dynkin algebra.
\end{remark}

\begin{corollary}\label{cor:tilting complex to tilted algebra}
Assume that $A$ is hereditary. Let $T$ be a 2-term tilting complex and $B=\End_{K^{b}(\proj A)}(T)$. Then $B$ is a tilted algebra of type $A'$ for some hereditary algebra $A'$ which is derived equivalent to $A$.
\end{corollary}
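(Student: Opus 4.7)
The plan is to show that $\mathrm{gl.dim}(B) \leq 2$; then Theorem~\ref{thm: silted-algebra-to-strictly-shod-algebra} will force $B$ to be tilted (since strictly shod algebras have global dimension exactly $3$), and a second application of Rickard's theorem will identify the type as a hereditary algebra derived equivalent to $A$.

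To bound the global dimension, I would first use Rickard's theorem: the tilting complex $T$ gives a triangle equivalence $F\colon \mathcal{D}^b(B)\iso \mathcal{D}^b(A)$ sending $B$ to $T$. Since $T\in K^{[-1,0]}(\proj A)$ is $2$-term, the image $\mathcal{B}:=F(\mod B)$ is the HRS-tilt of $\mod A$ at the torsion pair associated to $T$, so every $X\in \mathcal{B}$ has cohomology concentrated in degrees $\{-1,0\}$. Heredity of $A$ makes the connecting morphism in the truncation triangle vanish (it lies in $\Ext^{2}_A(H^0(X),H^{-1}(X))=0$), so $X\cong X_0\oplus X_1[1]$ with $X_0=H^0(X)$ and $X_1=H^{-1}(X)$ in $\mod A$.

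For any $X,Y\in \mathcal{B}$, expanding $\Ext^n_{\mathcal{B}}(X,Y)=\Hom_{\mathcal{D}^b(A)}(X,Y[n])$ via these splittings produces
\[
\Ext^n_{\mathcal{B}}(X,Y)\cong \Ext^n_A(X_0,Y_0)\oplus \Ext^{n+1}_A(X_0,Y_1)\oplus \Ext^{n-1}_A(X_1,Y_0)\oplus \Ext^n_A(X_1,Y_1).
\]
For $n\geq 3$ each of the four indices is at least $2$, so every summand vanishes by heredity. Hence $\mathrm{gl.dim}(B)=\mathrm{gl.dim}(\mathcal{B})\leq 2$, and Theorem~\ref{thm: silted-algebra-to-strictly-shod-algebra} yields $B=\End_{A'}(U)$ for some hereditary algebra $A'$ and tilting $A'$-module $U$. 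Rickard's theorem applied to $U$ then gives $\mathcal{D}^b(A')\simeq \mathcal{D}^b(B)\simeq \mathcal{D}^b(A)$, so $A'$ is derived equivalent to $A$. The step I expect to be the main hurdle is the identification of $\mathcal{B}$ with the HRS-tilt and the resulting splitting $X\cong X_0\oplus X_1[1]$, since both rely crucially on the $2$-term structure of $T$ combined with heredity of $A$.
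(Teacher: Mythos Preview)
The paper states this corollary without proof, presumably regarding it as an immediate consequence of Theorem~\ref{thm: silted-algebra-to-strictly-shod-algebra} together with Rickard's theorem. Your argument is correct and makes that implicit reasoning explicit. The decisive use of the \emph{tilting} (rather than merely silting) hypothesis is exactly where you place it: Rickard's equivalence $F\colon \mathcal{D}^b(B)\to \mathcal{D}^b(A)$ lets you identify $\Ext^n_B(-,-)$ with $\Hom_{\mathcal{D}^b(A)}(-,-[n])$, after which heredity of $A$ forces both the splitting $X\cong H^0(X)\oplus H^{-1}(X)[1]$ in the heart and the vanishing of all four summands for $n\geq 3$. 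The identification of $F(\mod B)$ with the HRS-tilt associated to $T$ is standard for $2$-term silting complexes and poses no real hurdle.

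One small point to tighten: Theorem~\ref{thm: silted-algebra-to-strictly-shod-algebra} is stated for \emph{connected} algebras. Since $T$ is tilting, $B$ is derived equivalent to $A$; if $A$ is connected hereditary then $K^b(\proj A)$ is indecomposable as a triangulated category, forcing $B$ to be connected as well, so the theorem applies directly. For disconnected $A$ one decomposes into connected pieces. Alternatively, once you have $\mathrm{gl.dim}(B)\leq 2$ and $B$ shod, you may bypass Theorem~\ref{thm: silted-algebra-to-strictly-shod-algebra} and quote Happel--Reiten--Smal{\o} (shod of global dimension $\leq 2$ is quasi-tilted) together with the fact that a quasi-tilted algebra derived equivalent to a hereditary algebra is tilted.
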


At the end of this section, we recall some results from \cite[Lemma 2.1]{XieYangZhang25} that will be useful for the classification of silted algebras.
Let $X$ and $Y$ be two finite sets. Denote by $X\times_s Y$ the set of all non-ordered pairs $\{x,y\}$, where $x\in X$ and $y\in Y$. 
\begin{lemma}
\label{lem:symmetric-product}
\begin{itemize}
\item[(a)] $X\times_s Y=Y\times_s X$.
\item[(b)] If $X$ and $Y$ have no intersection, then $X\times_s Y\cong X\times Y$.
\item[(c)] $|X\times_s X|=\frac{|X|(|X|+1)}{2}$.
\item[(d)] If $X'$ is a subset of $X$, then $|X'\times_s X|=|X'|\times |X|-\frac{|X'|(|X'|-1)}{2}$.
\end{itemize}
\end{lemma}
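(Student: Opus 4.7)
The plan is to prove each of the four parts directly from the definition of the symmetric product $X\times_s Y$ as the set of unordered pairs $\{x,y\}$ with $x\in X$ and $y\in Y$. Parts (a) and (b) are essentially unpackings of this definition: for (a), since $\{x,y\}$ and $\{y,x\}$ denote the same two-element set, the sets $X\times_s Y$ and $Y\times_s X$ literally coincide; for (b), the hypothesis $X\cap Y=\emptyset$ forces every pair $\{x,y\}$ with $x\in X$, $y\in Y$ to have two distinct entries, and one can recover $x$ as the unique element lying in $X$ and $y$ as the unique element lying in $Y$, giving an inverse to the obvious surjection $(x,y)\mapsto\{x,y\}$.

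For (c) I would partition $X\times_s X$ into the diagonal pairs $\{x,x\}$, of which there are $|X|$, and the off-diagonal pairs $\{x_1,x_2\}$ with $x_1\neq x_2$, of which there are $\binom{|X|}{2}$; this sums to $|X|+\tfrac{|X|(|X|-1)}{2}=\tfrac{|X|(|X|+1)}{2}$. For (d) I would use the natural surjection $X'\times X\twoheadrightarrow X'\times_s X$, $(x',y)\mapsto\{x',y\}$, and count preimages: an unordered pair whose two coordinates both lie in $X'$ and are distinct has two preimages (namely $(a,b)$ and $(b,a)$, both lying in $X'\times X$), while every other pair in $X'\times_s X$ --- the diagonal pairs and the pairs with one entry in $X\setminus X'$ --- has exactly one preimage, because in the latter case the entry from $X\setminus X'$ cannot occupy the first coordinate. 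Subtracting the overcount by $\binom{|X'|}{2}$ then yields
\[ |X'\times_s X| \;=\; |X'|\cdot|X| - \tfrac{|X'|(|X'|-1)}{2}. \]

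There is no substantive obstacle here; the only point that requires care is the preimage count in (d), where one must keep track of which unordered pairs are doubly covered by $X'\times X$ and which are not. The whole argument is purely set-theoretic and uses none of the algebraic machinery developed earlier in the paper.
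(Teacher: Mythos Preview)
Your proof is correct and entirely self-contained. Note that the paper does not actually supply its own proof of this lemma: it is quoted verbatim from \cite[Lemma~2.1]{XieYangZhang25} as a preliminary fact, so there is no argument in the present paper to compare against. Your direct set-theoretic treatment---the diagonal/off-diagonal split in (c) and the fibre count of the surjection $X'\times X\twoheadrightarrow X'\times_s X$ in (d)---is exactly the sort of elementary verification one would expect for a result of this kind, and it stands on its own.
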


\section{A classification of the 2-term silting complexes over $\Lambda_{n}$}\label{s:classification-of-2-term-silting}
In this section, in order to study the silted algebras of type $\Lambda_{n}$, based on the classification of all basic 2-term silting complexes for the Dynkin quiver of type $\mathbb{A}_{n}$ with linear orientation, we give a classification of the basic 2-term silting complexes of $\Lambda_{n}$. 

Let $A_{n}$ be the path algebra of the following quiver
$$\begin{xy}
(-10,0)*+{1}="1",
(0,0)*+{2}="2",
(12,0)*+{\cdots}="3",
(28,0)*+{n-1}="4",
(42,0)*+{n}="5",
\ar"2";"1", \ar"3";"2", \ar"4";"3", \ar"5";"4",
\end{xy}.$$
Recall that \cite[3.3]{Ringel84} for an indecomposable module $P$ over $A_{n}$, a full translation subquiver $\Gamma'$ of the AR-quiver $\Gamma$ of $\mod A_{n}$ is called the \textit{wing} of $P$, if for $z$ of $\Gamma'$, all direct predecessors of $z$ in $\Gamma$ belong to $\Gamma'$, $\Gamma'$ is of the form of the AR-quiver of $\mod A_{m}$ for some $m$ and $P$ is the projective-injective vertex of $\Gamma'$. 

First, according to the proof of \cite[Proposition 2.1]{HappelRingel81} by Happel and Ringel, we have the following well-known classification of the isoclasses of basic tilting modules over $A_n$, see \cite[Proposition 4.4]{XieYangZhang25}.

\begin{proposition}
\label{prop:tilting-modules-of-A}
Let $T$ be a basic tilting module over $A_n$. Then $T$ is of one of the following two forms:
\begin{itemize}
\item[(1)] $T=P(1)\oplus T'$, where $T'$ is a basic tilting module of the wing of $P(2)$ or a basic tilting module of the wing of  $I(n{-}1)$;

\item[(2)] $T=P(1)\oplus T'\oplus T''$, where $T'$ is a basic tilting module of the wing of $P(i)$  for some $3\leq i\leq n$ and $T''$ is a basic tilting module of the wing of $I(i-2)$.
\end{itemize}
\end{proposition}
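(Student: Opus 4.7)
My plan is to argue by a direct case analysis, exploiting the special role played by the projective-injective indecomposable of $A_n$. I will use interval notation: an indecomposable of $A_n$ is $[a,b]$ with $1 \le a \le b \le n$, and in the convention at hand $P(i) = [i,n]$, $I(i) = [1,i]$, so $P(1) = [1,n]$ is the unique projective-injective indecomposable.

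First I would show that $P(1)$ is a direct summand of every basic tilting module $T$. Since $P(1)$ is both projective and injective, both $\Ext^1(P(1),-)$ and $\Ext^1(-,P(1))$ vanish, so $P(1) \oplus T$ is rigid. A rigid $A_n$-module has at most $n$ non-isomorphic indecomposable summands, and $T$ already has $n$; hence $P(1) \in \add T$.

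Next I would write $T = P(1) \oplus T'$ and partition the summands of $T'$ into \emph{left-touching} intervals $[1,j]$ with $j < n$, \emph{right-touching} intervals $[k,n]$ with $k > 1$, and \emph{interior} intervals $[a,b]$ with $2 \le a \le b \le n-1$. If $T'$ has no right-touching summand, every summand lies in the wing of $I(n-1)$; rigidity plus the rank-$(n-1)$ count force $T'$ to be tilting there, yielding one branch of form (1), and the symmetric case yields the other. When both boundary types are present, let $i := \min\{k : [k,n] \text{ is a summand of } T'\}$ and $j_{\max} := \max\{j : [1,j] \text{ is a summand of } T'\}$. Using the Auslander--Reiten formula with $\tau[a,b] = [a+1,b+1]$ for $b<n$, I would compute that $\Ext^1([1,j],[k,n]) \neq 0$ precisely when $2 \le k \le j+1 \le n$; applied to $(j_{\max}, i)$ this forces $j_{\max} \le i-2$, and in particular $i \ge 3$. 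An analogous Ext-calculation for each interior summand $[a,b]$ against $[i,n]$ and against $[1,j_{\max}]$ will show that $[a,b]$ must lie either in the wing of $P(i)$ (when $a \ge i$) or in the wing of $I(i-2)$ (when $b \le i-2$); the two wings being disjoint, $T' = T'_1 \oplus T'_2$ splits accordingly, and the count $(n-i+1)+(i-2) = n-1$ combined with rigidity in each wing forces both $T'_1$ and $T'_2$ to be tilting for their wings, giving form (2).

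The hard part will be the case analysis for interior summands in the mixed case: I must verify that the joint Ext-vanishing conditions with respect to $[i,n]$ and $[1,j_{\max}]$ cleanly dichotomize the interior summands between the two wings, with no ambiguous cases left over. The inequality $j_{\max} \le i-2$, itself a by-product of the first Ext computation, is exactly what makes the two wings disjoint and the split unambiguous; once this is in place, the remaining combinatorial counting is routine.
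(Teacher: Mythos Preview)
The paper does not give its own proof of this proposition; it is stated as well-known and attributed to the Happel--Ringel construction \cite[Proposition~2.1]{HappelRingel81} and to \cite[Proposition~4.4]{XieYangZhang25}. Your direct combinatorial argument via interval notation is correct and strictly more self-contained than a bare citation.

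Two small remarks on the write-up. First, the step you flag as the ``hard part'' is easier than you anticipate: once $j_{\max}\le i-2$ is established, the single vanishing $\Ext^1([a,b],P(i))=0$ already dichotomises every non-projective summand $[a,b]$ of $T'$, since $\Hom(P(i),\tau[a,b])=\Hom(P(i),[a{+}1,b{+}1])\ne 0$ iff $a< i\le b{+}1$; the auxiliary computation against $[1,j_{\max}]$ is unnecessary, and the right- and left-touching summands land in the correct wings directly from the definitions of $i$ and $j_{\max}$. Second, when you conclude that $T'_1$ and $T'_2$ are tilting in their respective wings from the rank count, you are implicitly using that $\Ext^1$ computed inside a wing agrees with $\Ext^1$ in $\mod A_n$. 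This is true (minimal projective resolutions stay in the wing of $P(i)$, and minimal injective resolutions stay in the wing of $I(i-2)$), but it deserves one sentence.
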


To give the classification of the isoclasses of basic tilting modules and 2-term silting complexes over $\Lambda_n$, we need the following lemmas. 

\begin{lemma} \label{A-D}
Let $S=P(i)[1]\oplus T$ be a basic 2-term silting complex of $\Lambda_{n}$ for some vertex $i$ in $Q_{0}$.
\begin{itemize}
  \item [(1)] If $i=3$, then $T=T'\oplus T''$, where $T'\oplus P(3)[1]$ can be viewed as a basic 2-term tilting complex over the path algebra of quiver $\begin{xy}
(-10,0)*+{1}="2",
(0,0)*+{3}="3",
(10,0)*+{2}="4",
\ar"3";"2", \ar"3";"4",
\end{xy}$ and $T''$ is a basic tilting module of the wing of $P(4)$. 
  \item [(2)] If $i\geq4$, then $T=T'\oplus T''$, where $T'\oplus P(i)[1]$ can be viewed as a basic 2-term tilting complex over $\Lambda_{i}$ and $T''$ is a basic tilting module of the wing of $P(i+1)$.
\end{itemize}
\end{lemma}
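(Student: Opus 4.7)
My plan is to translate the classification problem into one about support $\tau$-tilting modules via the Adachi--Iyama--Reiten bijection of \cite{AdachiIyamaReiten14}, as recalled in the remark preceding Corollary \ref{cor:tilting complex to tilted algebra}. Let $M$ denote the basic support $\tau$-tilting $\Lambda_n$-module corresponding to $S=P(i)[1]\oplus T$. A standard feature of this bijection is that the direct summand $P(i)[1]$ of $S$ corresponds to the condition $Me_i=0$, so that $M$ is in fact a $\tau$-tilting module over the quotient algebra $\Lambda_n/\langle e_i\rangle$.

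The next step is to identify $\Lambda_n/\langle e_i\rangle$ explicitly by deleting the vertex $i$ from the quiver $Q$. When $i=3$, the resulting quiver has three connected components: the two isolated sink vertices $\{1\}, \{2\}$ and the linear $\mathbb{A}_{n-3}$ quiver $n\to n{-}1\to\cdots\to 4$, so that $\Lambda_n/\langle e_3\rangle\cong k\times k\times A_{n-3}$. When $i\geq 4$, one obtains two components: a Dynkin $\mathbb{D}_{i-1}$ quiver $\Lambda_{i-1}$ on the vertices $\{1,2,\ldots,i-1\}$ and a linear $\mathbb{A}_{n-i}$ quiver on $\{i+1,\ldots,n\}$, so that $\Lambda_n/\langle e_i\rangle\cong\Lambda_{i-1}\times A_{n-i}$. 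In each case the quotient is a product of algebras.

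I would then invoke the standard compatibility of support $\tau$-tilting theory with products: a basic support $\tau$-tilting module over $B_1\times B_2$ decomposes uniquely into a pair consisting of basic support $\tau$-tilting modules over each factor. Decomposing $M$ this way and translating back via the Adachi--Iyama--Reiten bijection yields $T=T'\oplus T''$, where $T'$ is the part supported on the left ``$\mathbb{D}$-side'' component and $T''$ is the part on the right ``$\mathbb{A}$-side'' component. Reattaching $P(i)[1]$ to the left factor, $T'\oplus P(i)[1]$ becomes a basic 2-term silting complex over the algebra on vertices $\{1,\ldots,i\}$---i.e., over $k(1\leftarrow 3\to 2)$ when $i=3$ and over $\Lambda_i$ when $i\geq 4$---while $T''$ is identified with a basic tilting module of the wing of $P(i+1)$, equivalently a basic tilting module over the $\mathbb{A}_{n-i}$ sub-quiver on $\{i+1,\ldots,n\}$.

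The main obstacle I anticipate is the bookkeeping of sizes and shifted summands. Since $|S|=n$ and $|T'\oplus P(i)[1]|=i$, one needs $|T''|=n-i$, matching the rank of $\mathbb{A}_{n-i}$; to conclude that $T''$ is a genuine tilting module rather than a properly support-$\tau$-tilting complex with shifted summands, I would carefully trace which $P(j)[1]$ summands of $S$ land in $T''$ versus in $T'$, using the connected-component structure of the quiver of $\Lambda_n/\langle e_i\rangle$ and the natural identification of the wing of $P(i+1)$ with $\mod A_{n-i}$.
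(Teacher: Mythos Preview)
Your route through the Adachi--Iyama--Reiten bijection and the product decomposition $\Lambda_n/\langle e_i\rangle\cong\Lambda_{i-1}\times A_{n-i}$ is genuinely different from the paper's argument, which works entirely inside the AR-quiver of $\mod\Lambda_n$. The paper simply observes that deleting the vertex $i$ cuts $Q$ into two pieces and then verifies by direct inspection of the AR-quiver the Hom-vanishing conditions $\Hom(P(i)[1],T'[-1])=0$ and $\Hom(T',P(i))=0$; these are precisely what upgrade ``silting'' to ``tilting'' for $T'\oplus P(i)[1]$. Your $\tau$-tilting argument makes the splitting $T=T'\oplus T''$ completely transparent and explains conceptually why the factor on $\{i+1,\dots,n\}$ is a tilting module over $A_{n-i}$ (which is exactly the wing of $P(i+1)$ under the paper's conventions). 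What the paper's hands-on computation buys is the extra negative-degree vanishing that $\tau$-tilting theory alone does not see.

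That is also where your proposal has a small gap: you conclude only that $T'\oplus P(i)[1]$ is a 2-term \emph{silting} complex over $\Lambda_i$, whereas the lemma asserts \emph{tilting}. Writing $T'=N_1\oplus Q_1[1]$ with $N_1$ a module supported on $\{1,\dots,i-1\}$ and $Q_1$ a sum of $P_{\Lambda_n}(j)$ with $j<i$, the missing condition is $\Hom(N_1,Q_1\oplus P(i))=0$. This does hold---every nonzero submodule of $P_{\Lambda_n}(j)$ for $j\le i$ has support meeting $\{i,\dots,n\}$, so $N_1$ cannot map nontrivially into it---but it needs to be said, and it is exactly what the paper's AR-quiver check is doing. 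Your worry about shifted summands $P(j)[1]$ with $j>i$ landing in $T''$ is legitimate: the lemma is tacitly applied with $i$ maximal among such indices, as the paper makes explicit only in the remark following Lemma~\ref{classification}.
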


\begin{proof}
Note that $i$ divides $Q$ into two parts, where one part is the subquiver $\widetilde{Q}$ of Dynkin type $\mathbb{A}_{n}$ or $\mathbb{D}_{n}$ with linear orientation. Since $\Hom(P(i), \tau I(i-1))\neq 0$, it follows that $\Hom(I(i-1),P(i)[1]) \neq 0$. On the other hand, according to the AR-quiver of $\mod \Lambda_{n}$, we have $\Hom(P(i)[1], T'[-1])= 0$ and $\Hom( T', P(i))= 0$. So $T'\oplus P(i)[1]$ is a basic 2-term tilting complex.
\end{proof}

\begin{remark}
Indeed, $T'\oplus P(i)[1]$ is the form of $\tau^{-1}\widetilde{T}$ for some tilting module $\widetilde{T}$.
\end{remark}

\begin{lemma} \label{classification}
Let $T$ be a basic tilting module over $\Lambda_{n}$ and let $S$ be a basic 2-term silting complex which is not tilting over $\Lambda_{n}$. Assume that $T$ and  $S$ are induced by the idempotent element $e_{i}$ for some vertex $i$ in $Q_{0}$.
\begin{itemize}
  \item [(1)] If $i\geq4$, then $T=\tau^{-m}(T_{1}\oplus T_{2})$ for some integer $m\geq 0$, where $T_{1}=\tau^{-1}T'$ for some basic tilting module $T'$ over some subquiver of $Q$ and $T_{2}$ is a basic tilting module of the wing of $P(i)$. 
  \item [(2)] If $i\geq3$, then $S=S_{1}\oplus S_{2}$, where $S_{1}$ can be viewed as a basic 2-term tilting complex over the path algebra of some subquiver of $Q$ and  $S_{2}$ is a basic tilting module of the wing of $P(i+1)$.
\end{itemize}
\end{lemma}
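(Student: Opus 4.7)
The plan is to deduce part (2) immediately from Lemma~\ref{A-D}, and to prove part (1) by iterating the Auslander--Reiten translate $\tau$ until we encounter a projective summand, then decomposing around the wing of that projective. Both parts rely on Proposition~\ref{prop:tilting-modules-of-A} to handle the wings, and on the AR-quiver structure of $\mathbb{D}_n$.

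For part (2), the hypothesis that $S$ is a non-tilting $2$-term silting complex induced by $e_i$ means, via the Adachi--Iyama--Reiten bijection, that $S = P(i)[1] \oplus T$ with $P(i)[1]$ the unique shifted projective summand. Lemma~\ref{A-D} then gives $T = T' \oplus T''$ such that $T' \oplus P(i)[1]$ is a $2$-term tilting complex over $\Lambda_i$ (when $i \geq 4$) or over the subquiver $1 \leftarrow 3 \rightarrow 2$ (when $i = 3$), and $T''$ is a basic tilting module of the wing of $P(i+1)$. Setting $S_1 := T' \oplus P(i)[1]$ and $S_2 := T''$ completes this part.

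For part (1), since $\Lambda_n$ is hereditary of finite representation type, any basic tilting module without a projective direct summand has a $\tau$-translate that is again a basic tilting module. Let $m \geq 0$ be minimal such that $T_0 := \tau^m T$ contains an indecomposable projective summand; the hypothesis that $T$ is induced by $e_i$ identifies this summand as $P(i)$. I would then show that $T_0 = T_1 \oplus T_2$, where $T_2$ gathers the indecomposable summands of $T_0$ lying in the wing of $P(i)$ and $T_1$ gathers the rest. Proposition~\ref{prop:tilting-modules-of-A}, applied to the wing of $P(i)$ (a sub-AR-quiver of linear type $\mathbb{A}_i$), forces $T_2$ to be a basic tilting module of that wing. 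For $T_1$, the key point is that the indecomposables outside the wing of $P(i)$ form a region of the AR-quiver of $\Lambda_n$ which can be identified, after applying $\tau^{-1}$, with the AR-quiver of $kQ''$ for an explicit subquiver $Q''$ of $Q$ (obtained by restricting to the vertices beyond $i$, together with the two sources when $i = 3$). Thus $T_1 = \tau^{-1} T'$ for a basic tilting module $T'$ over $kQ''$, and applying $\tau^{-m}$ yields $T = \tau^{-m}(T_1 \oplus T_2)$ in the stated form.

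The main obstacle will be the verification, for part (1), that the region of the AR-quiver of $\Lambda_n$ lying outside the wing of $P(i)$ is genuinely a $\tau$-shift of the AR-quiver of some path algebra of a subquiver of $Q$, and the explicit identification of that subquiver. This hinges on a careful knitting argument near the branching vertex $3$ of $Q$, and the case $i = 3$ (where removing vertex $3$ disconnects $Q$ into the two sources and the tail) needs a separate treatment. One must also check that applying $\tau^{-m}$ to $T_1 \oplus T_2$ stays within the preprojective region and yields a tilting module, rather than a rigid module with a summand drifting into the injective range.
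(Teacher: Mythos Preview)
Your proposal is correct and follows essentially the same path as the paper: part~(2) is deduced directly from Lemma~\ref{A-D}, and part~(1) rests on the Happel--Ringel construction from \cite[Proposition~2.1]{HappelRingel81} (the paper invokes it explicitly as $M = P(i)\oplus\tau^{-1}_{\Lambda_n}N$ with $N$ tilting over $\Lambda_n/\langle e_i\rangle$, whereas you recover the same formula by iterating $\tau$ until a projective summand appears, then read off the wing/non-wing decomposition). Two minor slips worth fixing: the wing of $P(i)$ is of type $\mathbb{A}_{n-i+1}$, not $\mathbb{A}_i$; and since part~(1) assumes $i\geq 4$, your separate $i=3$ discussion is unnecessary there (and your final worry about $\tau^{-m}(T_1\oplus T_2)$ ``drifting'' is moot, as $\tau^{-m}T_0 = T$ by construction).
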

\begin{proof}
 (1) Let $\ P(i)=e_{i}\Lambda_{n},\ \Lambda_{n}(i)=\Lambda_{n}/\langle e_{i} \rangle$. 
 For each basic tilting $\Lambda_{n}(i)$-module $N$, considered as an $\Lambda_{n}$-module, which has no non-trivial injective direct summands, form the $\Lambda_{n}$-module $M=P(i)\oplus \tau^{-1}_{\Lambda_{n}}N_{\Lambda_{n}}$. If $i\geqslant 4$, according to the AR-quiver of $\mod \Lambda_{n}$, we obtain that $P(i)$ can be viewed as the projective-injective module over the path algebra of quiver 
$\begin{xy}
	(-18,0)*+{i}="1",
	(-4,0)*+{i+1}="2",
	(12,0)*+{\cdots}="3",
	(25,0)*+{n}="4",
	\ar"2";"1", \ar"3";"2", \ar"4";"3",
\end{xy}$. This shows that $T=T_{1}\oplus T_{2}$, where $T_{2}$ is a basic tilting module of the wing of $P(i)$ for any $i\geq 4$.

(2)  By Lemma \ref{A-D}.
\end{proof}

\begin{remark} 
\begin{itemize}
\item [(1)] In \cite{Xing21}, Xing gave a algorithm to produce all basic tilting modules and 2-term silting complexes over any path algebra of a Dynkin quiver.

\item [(2)] Assume that $I$ is any not-empty subset of the set of vertexes $Q_{0}$ in $Q$ and $i$ is the minimal element in $I$. Then, Lemma \ref{classification}(1) is also hold for the subset $I$.
    
\item [(3)] Assume that $I$ is any not-empty subset of the set of vertexes $Q_{0}$ in $Q$ and $i$ is the maximal element in $I$. Then, Lemma \ref{classification}(2) is also hold for the subset $I$.
\end{itemize}
\end{remark}

\begin{remark} \label{rem:assume}
Let $S=T\oplus P[1]$ be a basic 2-term silting complex over $\Lambda_{n}$. We have $P\in \proj \Lambda_{n}$ and $T\in \mod \Lambda_{n}$ by \cite[Proposition 2.1]{HappelRingel81}. In particular, if $P=0$ or $T$ has no non-trivial projective direct summands, then $\End(T)$ is a tilted algebra of type $\Lambda_{n}$. Thus, we will divide silted algebras of type $\Lambda_{n}$ into the following two classes:
\begin{itemize}
  \item  Tilted algebras of type $\Lambda_{n}$,
  \item  $\End(S)$, where $S=T\oplus P[1]$ be a basic 2-term silting complex such that $P\neq 0$ and $T$ has a non-zero projective direct summand over $\Lambda_{n}$.
\end{itemize}
\end{remark}

\subsection{Tilting modules over $\Lambda_{n}$}
In this subsection, we recall some facts on tilting modules over $\Lambda_n$. For convenience, we first present the AR-quiver of $\mod \Lambda_{n}$ in Figure~1. 

$$\begin{xy}
0;<3pt,0pt>:<0pt,3pt>::
(-20,-12)*+{}="21",
(-50,-43)*+{}="22",
(10,-43)*+{}="23",
(0,-50)*+{\mathrm{Figure \ 1: The~AR{-}quiver~of~\mathrm{mod}~\Lambda_{n}}},
(-10,0)*+{P(1)}="1",
(10,0)*+{\cdots}="2",
(30,0)*+{\cdots}="3",
(50,0)*+{\cdots}="4",
(-10,-10)*+{P(2)}="5",
(10,-10)*+{\cdots}="6",
(30,-10)*+{\cdots}="7",
(50,-10)*+{\cdots}="8",
(-20,-20)*+{P(3)}="9",
(0,-20)*+{\cdots}="10",
(20,-20)*+{\cdots}="11",
(40,-20)*+{I(3)}="12",
(-30,-30)*+{\cdots}="13",
(-10,-30)*+{\cdots}="14",
(10,-30)*+{\cdots}="15",
(30,-30)*+{\cdots}="16",
(-40,-40)*+{P(n)}="17",
(-20,-40)*+{\cdots}="18",
(0,-40)*+{\cdots}="19",
(20,-40)*+{I(n)}="20",
\ar"1";"10", \ar"2";"11", \ar"3";"12", \ar"5";"10", \ar"6";"11", \ar"7";"12",
\ar"9";"1", \ar"9";"5", \ar"10";"2", \ar"10";"6", \ar"11";"3",
\ar"11";"7", \ar"12";"4", \ar"12";"8", \ar"13";"9",\ar"13";"18",\ar"14";"10",\ar"10";"15",\ar"15";"11",
\ar"15";"20",\ar"20";"16",\ar"16";"12",\ar"9";"14",\ar"17";"13",
\ar@{.}"21";"22",\ar@{.}"21";"23",\ar@{.}"22";"23"
\save "1"*[F-]\frm{}, \save "3"*[F-]\frm{}, \save "6"*[F-]\frm{}, \save "8"*[F-]\frm{}
\save "2"*[F.]\frm{}, \save "4"*[F.]\frm{}, \save "5"*[F.]\frm{}, \save "7"*[F.]\frm{}
\end{xy}$$

Based on the AR-quiver of $\mod \Lambda_n$, we have the following important observation.

\begin{remark}\label{rem:AR-quiver-parts}
\begin{itemize}
  \item [(1)] The additive closure of all indecomposable modules associated with the dotted triangle and the dotted rectangles is equivalent to $\mod A_{n-1}$ for the quiver $\begin{xy}
(-10,0)*+{2}="1",
(0,0)*+{3}="3",
(12,0)*+{\cdots}="4",
(24,0)*+{n}="5",
\ar"3";"1", \ar"4";"3", \ar"5";"4",
\end{xy}.$ The additive closure of all indecomposable modules associated with the dotted triangle and the solid rectangles is equivalent to $\mod A_{n-1}$ for the quiver $\begin{xy}
(-10,0)*+{1}="2",
(0,0)*+{3}="3",
(12,0)*+{\cdots}="4",
(24,0)*+{n}="5",
\ar"3";"2", \ar"4";"3", \ar"5";"4",
\end{xy}.$
  \item [(2)] Consider the quiver $\begin{xy}
(-10,0)*+{2}="1",
(0,0)*+{3}="3",
(12,0)*+{\cdots}="4",
(24,0)*+{n}="5",
\ar"3";"1", \ar"4";"3", \ar"5";"4",
\end{xy}$ as a full subquiver of $Q$. Then $P(2)$ can be viewed as the projective-injective module over the path algebra $A_{n-1}$ of this quiver.
  Let $T$ be a tilting module over $A_{n-1}$. By Proposition \ref{prop:tilting-modules-of-A},
  \begin{itemize}
    \item [(a)] $T=P(2)\oplus T'$, where $T'$ is a basic tilting module of the wing $I(n{-}1)$. When this quiver is regarded as a full subquiver of $Q$,
 the indecomposable module $I(n{-}1)$ in the AR-quiver of $\mod A_{n-1}$ corresponds to the indecomposable module $\tau^{-1}P(1)$ in the AR-quiver of $\mod \Lambda_{n}$. 
    \item [(b)] $T=P(2)\oplus T'\oplus T''$, where $T'$ is a basic tilting module of the wing of $P(i)$ for some $4\leq i\leq n$ and $T''$ is a basic tilting module of the wing of $I(i{-}2)$. When this quiver is regarded as a full subquiver of $Q$,
 the indecomposable module $I(i{-}2)$ in the AR-quiver of $\mod A_{n{-}1}$ corresponds to the indecomposable module $\tau^{-(n{-}i{+}2)}P(1)$ (or $\tau^{-(n{-}i{+}2)}P(2)$ ) in the AR-quiver of $\mod \Lambda_{n}$. 
\end{itemize}
In the following of this paper, consider the quiver $\begin{xy}
(-10,0)*+{2}="1",
(0,0)*+{3}="3",
(12,0)*+{\cdots}="4",
(24,0)*+{n}="5",
\ar"3";"1", \ar"4";"3", \ar"5";"4",
\end{xy}$ as a full subquiver of $Q$. When we say $T$ is a basic tilting module of the wing of $I(i{-}2)$ in the AR-quiver of $\mod A_{n{-}1}$, we mean that $I(i{-}2)$ corresponds to the indecomposable module $\tau^{-(n{-}i{+}2)}P(1)$ or $\tau^{-(n{-}i{+}2)}P(2)$ in the AR-quiver of $\mod \Lambda_{n}$.
  \end{itemize}
\end{remark}

It is easy to see that the tilting modules in (1) and (2) of Remark \ref{rem:AR-quiver-parts} are of the forms $P(1)\oplus \tau^{-1}T_{1}$ and $P(2)\oplus \tau^{-1}T_{2}$, respectively.  Note that $T_{1}$ and $T_{2}$ are tilting modules over $\mod A_{n-1}$. Thus, if $\End(T_{1})\cong \End(T_{2})$ in the AR-quiver of $\mod \Lambda_n$, then 
\begin{center}
$\End(P(1)\oplus \tau^{-1}T_{1})\cong\End(P(2)\oplus \tau^{-1}T_{2})$. 
\end{center}
Moreover, if $T$ and $\tau^{-1}T$ are tilting modules over $\Lambda_n$, then $\End(T)\cong\End(\tau^{-1}T)$. 

In fact, the tilting modules over any path algebra of a Dynkin quiver have been studied quite maturely. In order to calculate the number of tilted algebras of type $\Lambda_n$, We assume that the tilting modules with isomorphism endomorphism algebras are isomorphisms. We note that the following result is well-known.

\begin{proposition}\label{prop:the-forms-of-tilting-modules}
Let $T$ be a basic tilting module over $\Lambda_n$. Then $T$ is of one of the following six forms up to isomorphism:
\begin{itemize}
\item[(1)] $T=P(1)\oplus \tau^{-1}T_{1}$, where $T_{1}$ is a basic tilting module of the wing of $P(2)$;
\item[(2)] $T=P(1)\oplus P(2)\oplus \tau^{-1} T_{2}$, where $T_{2}$ is a basic tilting module of the wing of $P(3)$;
\item[(3)] $T=P(1)\oplus P(2)\oplus T_{2}$, where $T_{2}$ is a basic tilting module of the wing of $P(3)$;
\item[(4)] $T=T_{1}\oplus T_{2}$, where $T_{1}=\tau^{-1}T'$ for some basic tilting module $T'$ over some subquiver of $Q$ and $T_{2}$ is a basic tilting module of the wing of $P(i)$ for some $4\leq i\leq n$;
\item[(5)] $T=P(1)\oplus T_{1}\oplus T_{2}$, where $T_{1}=\tau^{-1}T''$ for some basic tilting module $T''$ over some subquiver of $Q$ and $T_{2}$ is a basic tilting module of the wing of $P(i)$ for some $4\leq i\leq n$.
\item[(6)] $T=P(1)\oplus P(2)\oplus T_{1}\oplus T_{2}$, where $T_{1}=\tau^{-1}T''$ for some basic tilting module $T''$ over some subquiver of $Q$ and $T_{2}$ is a basic tilting module of the wing of $P(i)$ for some $4\leq i\leq n$.
\end{itemize}
\end{proposition}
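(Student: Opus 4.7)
The plan is to argue by case analysis on the projective direct summands of $T$, combined with the structural description of the AR-quiver of $\mod\Lambda_n$ displayed in Figure~1 and the classification of tilting modules over $A_n$ recorded in Proposition~\ref{prop:tilting-modules-of-A}. The convention stated just before the proposition, namely that tilting modules with isomorphic endomorphism algebras are identified, will be used throughout; in particular $T$ is identified with $\tau^{-1}T$ whenever both are tilting, so the power of $\tau^{-1}$ appearing in Lemma~\ref{classification}(1) can always be normalized to $0$. The main dichotomy I would set up is whether or not some $P(i)$ with $i\ge 4$ occurs as a direct summand of $T$.

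If such an $i$ exists, let $i_0$ be the largest one. Lemma~\ref{classification}(1) then yields $T=\tau^{-m}(T_1\oplus T_2)$, where $T_2$ is a basic tilting module of the wing of $P(i_0)$ (containing $P(i_0)$ itself as a summand) and $T_1=\tau^{-1}T'$ for some basic tilting module $T'$ over a proper subquiver of $Q$. Taking $m=0$ by the convention and sub-analyzing whether $P(1)$ and $P(2)$ appear as summands of $T$ then produces exactly the forms~(4), (5), (6), corresponding respectively to neither, only $P(1)$, and both $P(1),P(2)$ being present. In the complementary case no $P(i)$ with $i\ge 4$ is a summand, so every indecomposable summand of $T$ lies in the region of the AR-quiver spanned by the wings of $P(1),P(2),P(3)$ and their $\tau^{-1}$-orbits. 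Here I would invoke Remark~\ref{rem:AR-quiver-parts}(2) to identify this region with $\mod A_{n-1}$ for the linear subquiver on vertices $\{2,3,\ldots,n\}$, and then apply Proposition~\ref{prop:tilting-modules-of-A} to that $A_{n-1}$. The three possibilities---only $P(1)$ appearing, both $P(1),P(2)$ appearing with the wing summand at $P(3)$ entering as $\tau^{-1}T_2$, and both $P(1),P(2)$ appearing with $T_2$ entering directly---recover forms~(1), (2) and~(3).

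The main obstacle is to check that the six listed forms are both exhaustive and, modulo the identification $\End(T)\cong\End(\tau^{-1}T)$, mutually non-overlapping. In particular one has to verify that the ``wing of $P(i)$'' part of $T$ and the ``$\tau^{-1}$-translated tilting over a subquiver'' part never share an indecomposable summand; this follows from $P(i)$ being the projective-injective vertex of its wing together with the layout of the AR-quiver in Figure~1, which places these two regions in disjoint strips. A secondary technical point is that when the relevant projective-summand index equals $3$ the subquiver entering the argument has a branching vertex rather than being linear of type $\mathbb{A}$, which is precisely why forms~(2) and~(3) are stated separately from~(4)--(6); the reasoning in the two regimes is otherwise completely parallel.
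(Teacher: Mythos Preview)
Your overall strategy matches the paper's one-line proof, which simply cites Lemma~\ref{classification} together with the Happel--Ringel construction (\cite[Proposition~2.1]{HappelRingel81}); you have correctly identified that the argument splits according to which projectives occur as summands and then feeds the pieces into Lemma~\ref{classification}(1) and Proposition~\ref{prop:tilting-modules-of-A}.

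There is, however, a concrete slip: you take $i_0$ to be the \emph{largest} index $\ge 4$ with $P(i_0)\mid T$, but this choice makes the decomposition fail. If, say, both $P(4)$ and $P(7)$ are summands of $T$ and you set $i_0=7$, then $P(4)$ must land in $T_1=\tau^{-1}T'$; but $P(4)$ is projective and hence not in the image of $\tau^{-1}$, a contradiction. The correct choice, consistent with Remark~3.3(2), is the \emph{minimal} such index: then every projective $P(j)$ with $j\ge i_0$ lies in the wing of $P(i_0)$ and can be absorbed into $T_2$, while the remaining summands (having no projective of index $\ge 4$) can legitimately be written as $\tau^{-1}$ of a tilting module over the subquiver on $\{1,\ldots,i_0-1\}$.

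A smaller point: in the complementary case, the inference ``no $P(i)$ with $i\ge 4$ is a summand, so every summand lies in the $A_{n-1}$-region'' is not immediate from the AR-quiver picture alone. What actually drives it is again the Happel--Ringel description $T=\bigoplus_{j\in I}P(j)\oplus\tau^{-1}N$: if $I\subseteq\{1,2,3\}$ then one reduces to tilting modules over the quotient by $e_I$, and it is this reduction (together with Remark~\ref{rem:AR-quiver-parts}) that places $T$ inside one of the $\mod A_{n-1}$ copies. Once you make that step explicit, your derivation of forms~(1)--(3) via Proposition~\ref{prop:tilting-modules-of-A} goes through.
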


\begin{proof} By Lemma \ref{classification} and the proof of \cite[Proposition 2.1]{HappelRingel81}.
\end{proof}

\begin{remark}\label{rem:I(1)-injective}
  \begin{itemize}
    \item[(1)] Let $T$ be a basic tilting module over $A_n$. Then $T$ must contain $P(1)$ as a direct summand, because it is a project-injective module. However, if we consider the indecomposable modules of $\mod A_n$ in the AR-quiver of $\mod \Lambda_{n}$, then $P(1)$ is not a injective module. Indeed, by Remark \ref{rem:AR-quiver-parts}, among the indecomposable modules in dotted rectangles, only $I(1)$ is an injective module. Thus, the module $\tau^{-1}T_{1}$, where $T_{1}$ is a tilting module of the wing of $P(2)$, in Proposition \ref{prop:the-forms-of-tilting-modules} (1) is exist and $|\tau^{-1}T_{1}|=n-1$.
    \item[(2)] Let $T$ be a basic tilting module of type of Proposition \ref{prop:the-forms-of-tilting-modules} (4). If $i=4$, then $T_{1}=I(1)\oplus I(2)\oplus I(n)$; If $i>4$, then $T_{1}$ can be viewed as a basic tilting module over $\Lambda_{i-1}$.
  \end{itemize}
\end{remark}

\subsection{2-term silting complexes over $\Lambda_{n}$}
In this subsection, we give a classification of the basic 2-term silting complexes over $\Lambda_{n}$. By \cite[Section I.5.6]{Happel88}, we know that the AR-quiver of $K^b(\proj \Lambda_n)$ is $\mathbb{Z}\overrightarrow{\mathbb{D}}_{n}$. We consider the AR-quiver of $\mod \Lambda_n$ as a full subquiver, and then draw the AR-quiver of $K^{[-1,0]}(\proj \Lambda_n)$ in Figure~2.
$$\begin{xy}
0;<3pt,0pt>:<0pt,3pt>::
(70,0)*+{P(1)[1]}="21",
(70,-10)*+{P(2)[1]}="22",
(60,-20)*+{P(3)[1]}="23",
(50,-30)*+{\cdots}="24",
(40,-40)*+{P(n)[1]}="25",
(10,-50)*+{\mathrm{Figure \ 2: The~AR{-}quiver~of~K^{[-1,0]}(\proj \Lambda_n)}},
(-10,0)*+{P(1)}="1",
(10,0)*+{\cdots}="2",
(30,0)*+{\cdots}="3",
(50,0)*+{\cdots}="4",
(-10,-10)*+{P(2)}="5",
(10,-10)*+{\cdots}="6",
(30,-10)*+{\cdots}="7",
(50,-10)*+{\cdots}="8",
(-20,-20)*+{P(3)}="9",
(0,-20)*+{\cdots}="10",
(20,-20)*+{\cdots}="11",
(40,-20)*+{I(3)}="12",
(-30,-30)*+{\cdots}="13",
(-10,-30)*+{\cdots}="14",
(10,-30)*+{\cdots}="15",
(30,-30)*+{\cdots}="16",
(-40,-40)*+{P(n)}="17",
(-20,-40)*+{\cdots}="18",
(0,-40)*+{\cdots}="19",
(20,-40)*+{I(n)}="20",
\ar"1";"10", \ar"2";"11", \ar"3";"12", \ar"5";"10", \ar"6";"11", \ar"7";"12",
\ar"9";"1", \ar"9";"5", \ar"10";"2", \ar"10";"6", \ar"11";"3",
\ar"11";"7", \ar"12";"4", \ar"12";"8", \ar"13";"9",\ar"13";"18",\ar"14";"10",\ar"10";"15",\ar"15";"11",
\ar"15";"20",\ar"20";"16",\ar"16";"12",\ar"9";"14",\ar"17";"13",
\ar"4";"23",\ar"8";"23",\ar"12";"24",\ar"16";"25",
\ar"25";"24",\ar"24";"23",\ar"23";"21",\ar"23";"22",
\end{xy}$$

Let $S_1$ and $S_2$ be two basic 2-term silting complexes over $\Lambda_n$ induced by the idempotents $e_1$ and $e_2$, respectively. By \cite[Proposition 2.1]{HappelRingel81} (also see \cite[Algorithm 3.1]{Xing21}), $S_1=P(1)[1]\oplus T_1$ and $S_2=P(2)[1]\oplus T_2$, where $T_1$ and $T_2$ are tilting modules over the subquivers $\begin{xy}
(-10,0)*+{2}="2",
(0,0)*+{3}="3",
(12,0)*+{\cdots}="4",
(24,0)*+{n}="5",
\ar"3";"2", \ar"4";"3", \ar"5";"4",
\end{xy}$ and $\begin{xy}
(-10,0)*+{1}="1",
(0,0)*+{3}="3",
(12,0)*+{\cdots}="4",
(24,0)*+{n}="5",
\ar"3";"1", \ar"4";"3", \ar"5";"4",
\end{xy}.$
Thus, if $\End(T_{1})\cong \End(T_{2})$ within the AR-quiver of $\mod \Lambda_n$, then 
\begin{center}
$\End(P(1)[1]\oplus T_{1})\cong\End(P(2)[1]\oplus T_{2})$. 
\end{center}
We assume that the 2-term basic silting complexes with isomorphism endomorphism algebras are isomorphisms. The following well-known proposition is a classification of the basic 2-term silting complexes over $\Lambda_n$ up to isomorphism.

\begin{proposition}\label{prop:classification-of-silting}
Let $S$ be a basic 2-term silting complex over $\Lambda_n$. Then $S$ is of one of the following six forms up to isomorphism:
\begin{itemize}
  \item[(1)] $T$, where $T$ is a basic tilting module over $\Lambda_n$;
  \item[(2)] $\tau^{-1}T$, where $T$ is a basic tilting module over $\Lambda_n$ which contains at least one injective module as a direct summand;
  \item[(3)] $S=P(1)[1]\oplus T$, where $T$ is a basic tilting module in the wing of $P(2)$;
  \item[(4)] $S=P(1)[1]\oplus P(2)[1]\oplus T$, where $T$ is a basic tilting module in the wing of $P(3)$;
  \item[(5)] $S=T_1\oplus T_2$, where $T_{1}$ can be viewed as a basic 2-term tilting complex over the path algebra of quiver $\begin{xy}
(-10,0)*+{1}="2",
(0,0)*+{3}="3",
(10,0)*+{2}="4",
\ar"3";"2", \ar"3";"4",
\end{xy}$ and $T_{2}$ is a basic tilting module in the wing of $P(4)$. 
  \item[(6)] $S=T_1\oplus T_2$, where $T_1$ can be viewed as a basic 2-term tilting complex over $\Lambda_{i}$ and $T_2$ is a basic tilting module in the wing of $P(i+1)$ for some $i\geq 4$.
\end{itemize}
\end{proposition}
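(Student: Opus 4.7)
The plan is to reduce the classification to a case split on the projective summands of $S$ in degree $-1$, and then invoke Lemmas \ref{A-D} and \ref{classification} to describe the remaining summands. By Remark \ref{rem:assume}, any basic 2-term silting complex $S$ over $\Lambda_n$ may be written as $S=T\oplus P[1]$ with $T\in\mod\Lambda_n$ and $P\in\proj\Lambda_n$. If $P=0$ then $S=T$ is a basic tilting module, giving case (1).

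Next I would handle the extreme opposite: $P\neq 0$ and $T$ has no non-trivial projective direct summand. Every indecomposable object in $K^{[-1,0]}(\proj\Lambda_n)$ sitting off the "projective column" of the AR-quiver in Figure 2 is of the form $\tau^{-1}X$ for some indecomposable $X\in\mod\Lambda_n$; the $P(j)[1]$ in degree $-1$ correspond to $\tau^{-1}I(j)$. Hence $S=\tau^{-1}T'$ for some basic $\Lambda_n$-module $T'$, and since $\tau^{-1}$ is a derived auto-equivalence, $T'$ is a basic tilting module. The presence of a non-trivial $P[1]$ part forces $T'$ to contain at least one injective summand, yielding case (2).

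The remaining situation is $P\neq 0$ together with $T$ having a non-trivial projective direct summand. Let $i$ be the minimal vertex of $Q$ with $P(i)[1]$ a direct summand of $S$. Using the $\mathbb{Z}/2$ automorphism swapping the vertices $1$ and $2$ together with the convention that silting complexes with isomorphic endomorphism algebras are identified, I may assume $i\neq 2$. If $i=1$, then either $P(2)[1]$ is not a summand, in which case the remaining summands are constrained by the AR-quiver to lie in the wing of $P(2)$ (case (3)), or $P(2)[1]$ is a summand, forcing the residual tilting summand into the wing of $P(3)$ (case (4)). If $i=3$, Lemma \ref{A-D}(1) together with the remark following Lemma \ref{classification} applied to the subset $\{3\}\cup\{j:P(j)[1]\mid S\}$ produces the desired decomposition $T=T'\oplus T''$, giving case (5). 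If $i\geq 4$, Lemma \ref{A-D}(2) gives the analogous decomposition, producing case (6).

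The main obstacle I expect is the step covering case (2), which requires verifying that the isomorphism $\tau^{-1}I(j)\cong P(j)[1]$ in $K^b(\proj\Lambda_n)$ applies uniformly to all non-projective indecomposable summands, and that the resulting $T'$ really is tilting (rather than merely $\tau$-rigid). This will follow from $|S|=|\Lambda_n|$ together with the compatibility of $\tau^{-1}$ with the silting/tilting axioms in the hereditary Dynkin setting. A secondary, purely bookkeeping obstacle is checking that the subcases for $i=1$ really partition according to whether $P(2)[1]$ appears, which is why the classification of tilting modules in Proposition \ref{prop:the-forms-of-tilting-modules} must be invoked to describe the residual summand inside the indicated wing.
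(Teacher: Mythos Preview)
Your overall strategy matches the paper's: split $S=T\oplus P[1]$, dispatch the cases $P=0$ and ``$T$ has no projective summand'' to forms (1) and (2), and then use Lemma~\ref{A-D}/Lemma~\ref{classification} for the rest. Your treatment of form~(2) via $\tau^{-1}$ as a derived auto-equivalence is fine.

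The gap is in the remaining case. You take $i$ to be the \emph{minimal} vertex with $P(i)[1]$ a summand, but the decomposition in Lemma~\ref{A-D} and Lemma~\ref{classification}(2) only works with $i$ \emph{maximal}; this is exactly what Remark~3.4(3) records. Concretely, in form~(6) the piece $T_2$ is a tilting \emph{module} in the wing of $P(i{+}1)$, so it cannot absorb any $P(j)[1]$ with $j>i$, while $T_1$ lives over $\Lambda_i$ and cannot absorb them either. Your argument for $i=1$ illustrates the failure: you assert that when $P(1)[1]$ is a summand and $P(2)[1]$ is not, the remaining summands lie in the wing of $P(2)$. The presilting condition $\Hom(P(1),M)=0$ forces the \emph{module} summands into that wing, but nothing prevents a further shifted summand $P(j)[1]$ with $j\geq 3$. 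For instance one can have $P(1)[1]$, $P(4)[1]$ and the module $P(2)$ all as summands (note $P(2)=S(2)$ is supported only at vertex~$2$), and such an $S$ is of form~(6) with $i=4$, not form~(3). The fix is simply to replace ``minimal'' by ``maximal'' and then run the same case split: $i=1$ (or $2$, by symmetry) gives~(3); $i=2$ with $P(1)[1]$ also present gives~(4); $i=3$ gives~(5); $i\geq 4$ gives~(6).
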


\begin{proof} By Lemma \ref{classification}.
\end{proof}

\begin{remark}
Let $S$ be a basic 2-term silting complex over $\Lambda_n$. By Proposition \ref{prop:classification-of-silting} and Remark \ref{rem:assume}, we know that if $P(n)[1]$ is a direct summand of $S$, then $S$ is a basic 2-term tilting complex over $\Lambda_n$.
\end{remark}

\section{Silted algebras of type $\Lambda_n$}
\label{s:silted-algebra-of-A_{n}}
In this section, we give a classification of all basic silted algebras and strictly shod algebras among them of type $\Lambda_n$ up to isomorphism. 
Moreover, we calculate the number of these silted algebras, especially the strictly shod algebras. According to Propositions \ref{prop:classification-of-silting} and \ref{prop:the-forms-of-tilting-modules}, we know that the classification of silted algebras of type $\Lambda_n$ is closely related to the classification of tilted algebras of type $A_n$.

\subsection{A classification of the silted algebras of type $\Lambda_n$}
Put
\begin{align*}
\ca_t(A_n)&:=\{\text{basic tilted algebras of type $A_n$}\}/\cong~,\\
\ca_t(\Lambda_n)&:=\{\text{basic tilted algebras of type $\Lambda_n$}\}/\cong~,\\
\ca_t(\mathbb{D}_n)&:=\{\text{basic tilted algebras of type $\mathbb{D}_n$ but not of type $\Lambda_n$}\}/\cong~,\\
\ca_s(\Lambda_n)&:=\{\text{basic silted algebras of type $\Lambda_n$}\}/\cong~,\\
\ca_{ss}(\Lambda_n)&:=\{\text{basic strictly shod algebras of type $\Lambda_n$}\}/\cong~.
\end{align*}
Let $a_t(A_n)$, $a_t(\Lambda_n)$, $a_{ss}(\Lambda_n)$ and $a_s(\Lambda_n)$ denote the cardinalities of $\ca_t(\Lambda_n)$, $\ca_{ss}(\Lambda_n)$ and $\ca_s(\Lambda_n)$, respectively. Then we have a classification of silted algebras of type $\Lambda_n$ as follows:

\begin{theorem}\label{thm:classification-of-silted-algebras-of-D}
$\ca_s(\Lambda_n)=\cb_1\sqcup\cb_2\sqcup\cb_3\sqcup\cb_4\sqcup\cb_5\sqcup\cb_6\sqcup\cb_7$, where
\begin{itemize}
  \item [(1)] $\cb_1=\ca_t(\Lambda_n)$;
  \item [(2)] $\cb_2=\ca_t(\mathbb{D}_n)$;
  \item [(3)] $\cb_3=\bigsqcup_{m=4}^{n-1}(\ca_t(\Lambda_m)\times\ca_t(A_{n-m}))$;
  \item [(4)] $\cb_4=\ca_t^{3}(A_{n-1})\times_s\ca_t(A_{1})$;
  \item [(5)] $\cb_5=(\ca_t(A_{n-2})\times_s\ca_t(A_{1}))\times_s\ca_t(A_{1})$;
  \item [(6)] $\cb_6=\ca_t(A_{n-3})\times_s\ca_t(A_{3})$;
  \item [(7)] $\cb_7=\ca_{ss}(\Lambda_n)$. 
\end{itemize}
\end{theorem}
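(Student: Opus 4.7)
The plan is to combine Theorem~\ref{thm: silted-algebra-to-strictly-shod-algebra} (silted equals tilted or strictly shod) with the six-family classification of basic 2-term silting complexes over $\Lambda_n$ in Proposition~\ref{prop:classification-of-silting}. By the dichotomy, every silted algebra is either tilted or strictly shod, and the strictly shod ones are precisely $\cb_7 = \ca_{ss}(\Lambda_n)$ by definition. The remaining task is to partition the \emph{tilted} silted algebras into $\cb_1 \sqcup \cdots \sqcup \cb_6$ by analysing $\End(S)$ for each form of the underlying silting complex $S$.

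For forms~(1) and~(2) of Proposition~\ref{prop:classification-of-silting}, $S$ is a 2-term tilting complex, so Corollary~\ref{cor:tilting complex to tilted algebra} identifies $\End(S)$ as a tilted algebra of some hereditary algebra derived equivalent to $\Lambda_n$, hence of Dynkin type $\mathbb{D}_n$; splitting by whether that hereditary partner is $\Lambda_n$ itself produces exactly $\cb_1 = \ca_t(\Lambda_n)$ and $\cb_2 = \ca_t(\mathbb{D}_n)$. For forms~(3)--(6), $S$ decomposes as $T_1 \oplus T_2$, in which $T_2$ is a tilting module of the wing of some $P(j)$ (so $\End(T_2) \in \ca_t(A_k)$ for the appropriate $k$, via the identification of the wing with $\mod A_k$), while $T_1$ is either a direct sum of shifted projectives or a 2-term tilting complex over a smaller $\Lambda_i$. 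Applying Corollary~\ref{cor:tilting complex to tilted algebra} to $T_1$---combined, for form~(6), with the $\tau^{-1}\widetilde{T}$-description in the remark following Lemma~\ref{A-D}---yields $\End(T_1)$ in $\ca_t(\Lambda_m)$ or in a smaller tilted class. Matching the resulting pairs with the stated decompositions gives form~(3)$\leftrightarrow\cb_4$, form~(4)$\leftrightarrow\cb_5$, form~(5)$\leftrightarrow\cb_6$, and form~(6)$\leftrightarrow\cb_3$. The symmetric products $\times_s$ that appear in $\cb_4, \cb_5, \cb_6$ are accounted for by Lemma~\ref{lem:symmetric-product}: when two factors live over isomorphic sub-quivers (for example two copies of $A_1$), swapping them yields an isomorphic silted algebra, so the correct parameter set is a symmetric product rather than an ordered product.

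The main obstacle will be the structural description of $\End(T_1 \oplus T_2)$ for forms~(3)--(6) as a function of the pair $\bigl(\End(T_1), \End(T_2)\bigr)$. One must show that the cross-morphism spaces $\Hom_{K^b(\proj \Lambda_n)}(T_i, T_j)$ for $i \neq j$ are determined, up to isomorphism of the endomorphism algebra $\End(S)$, by that pair alone, so that the assignment $S \mapsto \End(S)$ descends to a well-defined bijection onto the corresponding $\cb_i$. Once this is in place, disjointness of the seven pieces follows from elementary invariants: Dynkin orientation separates $\cb_1$ from $\cb_2$; the number and sizes of the tilted factors coming from the wing decomposition separate $\cb_3$--$\cb_6$ pairwise; and the global dimension ($=3$ for $\cb_7$, $\leq 2$ for any tilted algebra) isolates $\cb_7$ from the rest.
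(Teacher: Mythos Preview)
Your matching of the six families of Proposition~\ref{prop:classification-of-silting} to the seven pieces $\cb_i$ is incorrect, and the error stems from a wrong assumption about the cross-morphism spaces in family~(3).

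First, families~(1) and~(2) contribute only to $\cb_1$, not to $\cb_1\cup\cb_2$. Family~(1) consists of tilting modules over $\Lambda_n$, and family~(2) consists of complexes $\tau^{-1}T$ with $T$ a tilting module; since $\tau^{-1}$ is an autoequivalence of $K^b(\proj\Lambda_n)$, we have $\End(\tau^{-1}T)\cong\End(T)\in\ca_t(\Lambda_n)$. So neither family can produce a tilted algebra of type $\mathbb{D}_n$ that is \emph{not} already of type $\Lambda_n$.

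Second, and this is the real gap, family~(3) does \emph{not} always give a product algebra. Writing $S=P(1)[1]\oplus T$ with $T$ a tilting module of the wing of $P(2)$, you implicitly assume $\Hom(T,P(1)[1])=0$ so that $\End(S)\cong\End(T)\times k$. But this vanishing holds only for a specific subclass of such $T$. By Proposition~\ref{prop:tilting-modules-of-A} applied to $A_{n-1}$ (identified with the wing of $P(2)$ via Remark~\ref{rem:AR-quiver-parts}), the tilting module $T$ falls into one of three subcases. If $T=P(2)\oplus T'$ with $T'$ in the wing of $P(3)$, then indeed $\Hom(T,P(1)[1])=0$ and $\End(S)$ lands in $\cb_4$. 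If instead $T=P(2)\oplus T'$ with $T'$ in the wing of $I(n{-}1)$ (i.e., the wing of $\tau^{-1}P(1)$ in $\mod\Lambda_n$), then $\Hom(T',P(1)[1])\neq 0$, the endomorphism algebra is connected, and one checks it is tilted of type $\mathbb{D}_n$ but not of type $\Lambda_n$: this is where $\cb_2$ comes from. Finally, if $T$ has the mixed form~(2) of Proposition~\ref{prop:tilting-modules-of-A}, again $\Hom(T,P(1)[1])\neq 0$, the algebra $\End(S)$ is connected, and (by the effective-intersection argument of Theorem~\ref{thm:classification-of-strictly-shod-algebras}) it has global dimension~$3$: these are precisely the strictly shod algebras $\cb_7$. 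So family~(3) feeds all three of $\cb_2$, $\cb_4$, and $\cb_7$, and your proposal to obtain $\cb_7$ abstractly via the tilted/shod dichotomy does not locate where these algebras actually arise in the classification.

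Your handling of families~(4), (5), (6) (yielding $\cb_5$, $\cb_6$, $\cb_3$ respectively) is correct in outline: there the relevant cross-Homs really do vanish, so the endomorphism algebras are products.
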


We present the proof of this Theorem in Section~\ref{sec:the-proof-of-theorem-4.1}. In \cite{XieYangZhang25}, we showed that there are no strictly shod algebras in silted algebras of type $A_n$. However, among silted algebras of type $\Lambda_n$, there are many strictly shod algebras. Indeed, silted algebras of type $\Lambda_{n}$ forming the following families: (1) elements in $\cb_1$ are tilted algebras of type $\Lambda_{n}$; (2) elements in $\cb_2$ are tilted algebras of type $\mathbb{D}_n$ but not of type $\Lambda_n$; (3) elements in $\cb_3$ are tilted algebras of type $\Lambda_{m}\times A_{n-m}$, where $4\leq m\leq n-1$; (4) elements in $\cb_4$ are tilted algebras of type $A_{n-1}\times A_{1}$; (5) elements in $\cb_5$ are tilted algebras of type $A_{n-2}\times A_{1}\times A_{1}$; (6) elements in $\cb_6$ are tilted algebras of type $A_{n-3}\times A_{3}$; (7) elements in $\cb_7$ are strictly shod algebras of type $\Lambda_{n}$.

\subsection{Tilted algebras of type $\Lambda_{n}$} \label{s:tilted-algebras-of-type-D}
In this subsection, we study tilted algebras of type $\Lambda_n$. First, we recall some results for $A_n$ in \cite{XieYangZhang25}. To classify the silted algebras of type $A_n$, we proposed the \emph{rooted quiver with relation}--a quiver with relation that includes a vertex (referred to as the \emph{root}) of the quiver. In this context, the path algebra of a rooted quiver with relation is defined as the path algebra of the underlying quiver with relation, that is, the quotient of the path algebra of the underlying quiver modulo the ideal generated by the relations. 

A \emph{rooted subquiver with relation} is a subquiver with relation that contains the root, and it is \emph{full} if its subquiver is a full subquiver and its relations encompass all relations involving the subquiver. It is widely known that the endomorphism algebra of a tilting module over $A_n$ is a connected subquiver of the genealogical tree. 

Let $n$ be a positive integer. We put 
\begin{align*}
\cq(n)&=\{\text{full connected rooted subquivers with relation of the genealogical tree}\\
&\hspace{20pt} \text{ with $n$ vertices}\}.
\end{align*}
It is evident that elements of $\cq(n)$ are pairwise non-isomorphic as rooted quivers with relation, but they can be isomorphic as quivers with relation. Furthermore, the path algebras of two elements of $\cq(n)$ are isomorphic if and only if these two elements are isomorphic as quivers with relation. Put
\begin{align*}
\cq_h(n)&=\{R\in\cq(n)\mid R \text{ has trivial relation}\},\\
\cq_{nh}(n)&=\{R\in\cq(n)\mid R \text{ has non-trivial relations}\}.
\end{align*}
We say that the rooted quivers with relation in $\cq_h(n)$ are \emph{hereditary} and those in $\cq_{nh}(n)$ are \emph{non-hereditary}. Clearly, $|\cq_h(n)|=2^{n-1}$. Moreover, if $R$ and $R'$ are different elements of $\cq_{nh}(n)$, then they are not isomorphic as quivers with relation.

Let $\ct(A_n)$ be the set of isomorphism classes of basic tilting modules over $A_n$, and let $\varepsilon_1\colon\ct(A_n)\to\ca_t(A_n)$ be the map of taking the endomorphism algebra. With each $T\in \ct(A_n)$, we associate a rooted quiver with relation $\varepsilon'(T) \in \cq(n)$. Indeed, by \cite[Lemma 4.8]{XieYangZhang25}, the map $\varepsilon': \ct(A_n)\rightarrow \cq(n)$ is a bijective and $\varepsilon_1$ is the composition of $\varepsilon'$ with the map of taking a rooted quiver with relation to its path algebra. In particular, Let 
\begin{align*}
\ct_{nh}(A_n)&=\{T\mid  \End(T) \text{ is non-hereditary}\},\\
\ca_{nht}(A_n)&=\{C\mid C \text{ is non-hereditary}\}.
\end{align*}
We have
\begin{corollary}\label{cor:A_n-bijective}
$\varepsilon_1\colon\ct_{nh}(A_n)\to\ca_{nht}(A_n)$ is a bijective.
\end{corollary}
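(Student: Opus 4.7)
The plan is to reduce the statement to the bijection $\varepsilon'\colon\ct(A_n)\to\cq(n)$ recorded just above, and then exploit the distinguishing property of non-hereditary rooted quivers with relation stated in the paragraph preceding the corollary.

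First I would observe that $\varepsilon'$ restricts to a bijection $\ct_{nh}(A_n)\to\cq_{nh}(n)$. Since $\varepsilon_1$ factors as $\varepsilon'$ followed by the ``take path algebra'' map, and since the path algebra of a rooted quiver with relation $R\in\cq(n)$ is non-hereditary precisely when $R$ carries a non-trivial relation, the conditions defining $\ct_{nh}(A_n)$ and $\cq_{nh}(n)$ correspond under $\varepsilon'$. This gives the restricted bijection for free.

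Next I would establish injectivity of $\varepsilon_1$ on $\ct_{nh}(A_n)$. Suppose $T,T'\in\ct_{nh}(A_n)$ satisfy $\End(T)\cong\End(T')$; then the corresponding elements $R=\varepsilon'(T)$ and $R'=\varepsilon'(T')$ of $\cq_{nh}(n)$ have isomorphic path algebras, hence are isomorphic as quivers with relation. By the property recalled above the statement (distinct elements of $\cq_{nh}(n)$ are pairwise non-isomorphic as plain quivers with relation), this forces $R=R'$ and therefore $T\cong T'$. For surjectivity, any $C\in\ca_{nht}(A_n)$ is by definition $\End(T)$ for some basic tilting $T\in\ct(A_n)$; since $C$ is non-hereditary, $T\in\ct_{nh}(A_n)$ and $\varepsilon_1(T)=C$.

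The only subtle point, and the step I expect could be mildly obstructive, is the asymmetry between the hereditary and non-hereditary cases: two different hereditary rooted quivers can yield isomorphic (and hence hereditary) path algebras, which is precisely why the corollary is stated only for the non-hereditary part. Once the non-isomorphism clause for $\cq_{nh}(n)$ is invoked, the argument is otherwise a direct diagram chase through the bijection $\varepsilon'$. No further computation is needed.
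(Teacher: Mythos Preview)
Your proposal is correct and follows precisely the reasoning implicit in the paper's presentation: the paper states the corollary without proof, treating it as an immediate consequence of the bijection $\varepsilon'\colon\ct(A_n)\to\cq(n)$ together with the clause that distinct elements of $\cq_{nh}(n)$ are non-isomorphic as quivers with relation. Your write-up simply makes this explicit, and there is nothing to add.
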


Now we consider the set $\ct(\Lambda_n)$, $\ca_{nht}(\Lambda_n)$ and the map $\varepsilon\colon\ct(\Lambda_n)\to\ca_t(\Lambda_n)$. By Proposition \ref{prop:the-forms-of-tilting-modules}, we have the following useful corollary.

\begin{corollary}\label{cor:bijective-of-tilting-and-tilted-algebra}
$\varepsilon\colon\ct_{nh}(\Lambda_n)\to\ca_{nht}(\Lambda_n)$ is a bijective for any $n\geq 5$.
\end{corollary}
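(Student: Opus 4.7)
The plan is to deduce the corollary from two ingredients already in place: the classification of basic tilting modules over $\Lambda_n$ up to isomorphism of endomorphism algebras (Proposition~\ref{prop:the-forms-of-tilting-modules}) and the analogous $A_n$ bijectivity (Corollary~\ref{cor:A_n-bijective}). Surjectivity of $\varepsilon$ is immediate from the definition of $\ca_{nht}(\Lambda_n)$: every non-hereditary basic tilted algebra of type $\Lambda_n$ is by construction $\End(T)$ for some $T\in\ct(\Lambda_n)$, and such a $T$ then automatically lies in $\ct_{nh}(\Lambda_n)$. The entire content is therefore injectivity.

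For injectivity, I would take $T,T'\in\ct_{nh}(\Lambda_n)$ with $\End(T)\cong\End(T')$ and use Proposition~\ref{prop:the-forms-of-tilting-modules} to write each in one of the six canonical forms (1)--(6). The first step is to show that $T$ and $T'$ must lie in the same case. This is read off from intrinsic invariants of the endomorphism algebra: the number of ``branching'' vertices of $\mathbb{D}$-type (produced by the summands $P(1)$, $P(2)$, or their $\tau^{-1}$-shifts), the lengths of the linear $\mathbb{A}$-type arms meeting those branching vertices, and the placement of the non-trivial monomial relations. For $n\ge 5$ these invariants genuinely separate the six cases; the hypothesis $n\ge 5$ enters precisely here, since for smaller $n$ the wings are too short to distinguish certain cases combinatorially.

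Having fixed the case, both $T$ and $T'$ decompose as $T=T_{\mathrm{fix}}\oplus T_1\,(\oplus T_2)$ and $T'=T_{\mathrm{fix}}\oplus T_1'\,(\oplus T_2')$, where $T_{\mathrm{fix}}$ is rigidly prescribed by the case (a fixed subset of $\{P(1),P(2)\}$, possibly translated by $\tau^{-1}$) and each $T_i$ (resp.\ $T_i'$) is a basic tilting module on a wing equivalent to $\mod A_m$ for some $m$ determined by the case. The algebra $\End(T)$ then splits as the pasting of $\End(T_{\mathrm{fix}})$ with the wing-blocks $\End(T_i)$ glued along the apex vertex of each wing. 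An isomorphism $\End(T)\cong\End(T')$ therefore restricts to isomorphisms on each wing-block, and Corollary~\ref{cor:A_n-bijective} applied to those wings where the wing-block is non-hereditary yields $T_i\cong T_i'$; the hereditary wing-blocks are determined by their underlying quiver, which in turn is fixed by the paste. Since $T\in\ct_{nh}(\Lambda_n)$, at least one wing indeed carries a non-hereditary block, which is where the reduction to Corollary~\ref{cor:A_n-bijective} has bite.

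The main obstacle will be controlling the accidental isomorphisms flagged by the author in the paragraph following Remark~\ref{rem:AR-quiver-parts}: the $\tau^{-1}$-action can send a tilting module to another with isomorphic endomorphism algebra, and the $P(1)\leftrightarrow P(2)$ symmetry can have the same effect. These redundancies are precisely what the enumeration in Proposition~\ref{prop:the-forms-of-tilting-modules} is designed to absorb (``up to isomorphism''), but one still has to verify carefully that no further cross-case coincidences occur once attention is restricted to the non-hereditary locus $\ct_{nh}(\Lambda_n)$. Executing this case-by-case check, using the quiver-theoretic invariants listed in the second paragraph to separate cases and Corollary~\ref{cor:A_n-bijective} to separate modules within a case, completes the proof.
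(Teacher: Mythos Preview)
Your proposal is correct and follows essentially the same line as the paper: both reduce to the classification in Proposition~\ref{prop:the-forms-of-tilting-modules} and then invoke the $A_n$ bijectivity (Corollary~\ref{cor:A_n-bijective}) on the wing pieces. The paper's proof is terser and leans on the \emph{rooted quiver with relation} formalism: it observes that in each case the quiver with relations of $\End(T)$ is obtained by attaching an extra arrow at the \emph{root} of the rooted quiver $\varepsilon'(T_i)\in\cq(m)$, so that the bijection $\varepsilon'\colon\ct(A_m)\to\cq(m)$ transports directly to injectivity for $\varepsilon$ on $\Lambda_n$. Your version replaces this bookkeeping device with ad hoc invariants (branching vertices, arm lengths, relation placement) to separate the six cases before restricting to the wings; this works, but the rooted-quiver language has the advantage that the gluing point is already part of the data, so one need not argue separately that the case and the attachment vertex are recoverable from $\End(T)$.
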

\begin{proof}
Let $T$ be a tilting module over $\Lambda_n$. If $T$ is of the form (1) in Proposition \ref{prop:the-forms-of-tilting-modules} and $\varepsilon_1(T_1)\in \ca_{nht}(A_n)$, then $\varepsilon(T)\in \ca_{nht}(\Lambda_n)$, because the quiver with relation of $T$ is obtained by adding an arrow from the vertex of $P(1)$ to the root of $\tau^{-1}T_1$, this operation does not change the relation. In this case, by Corollary \ref{cor:A_n-bijective}, $\varepsilon\colon\ct_{nh}(\Lambda_n)\to\ca_{nht}(\Lambda_n)$ is a bijective. Furthermore, assume that $T$ is of the form (4) in Proposition \ref{prop:the-forms-of-tilting-modules}. In this case, the quiver with relation of $T$ is obtained by adding an arrow from the root of $T_2$ to some vertex of $T_1$. If $\varepsilon_1(T_2)\in \ca_{nht}(A_n)$, then $\varepsilon(T)\in \ca_{nht}(\Lambda_n)$. On the other hand, if $\varepsilon_1(T_2)\in \ca_{nht}(\Lambda_{i-1})$, then we also have $\varepsilon(T)\in \ca_{nht}(\Lambda_n)$. Other cases can be proved similarly.
\end{proof}

\begin{remark}\label{rem:tilted-algebras-non-isomorphism}
\begin{itemize}
  \item[(1)] Corollary \ref{cor:bijective-of-tilting-and-tilted-algebra} is not hold for $\Lambda_4$, See \cite[Section 3.3.1]{Xing21}.
  \item[(2)] In Proposition \ref{prop:the-forms-of-tilting-modules} (4), $T'$ is a tilting module over some sunquiver of $Q$. If $\tau^{-1}T'$ has no non-trivial injective direct summands, then $\tau^{-2}T'$ is also a tilting module over the sunquiver of $Q$. Thus, we can obtain two tilting modules $\tau^{-1}T'\oplus T_2$ and $\tau^{-2}T'\oplus T_2$. Note that $\End(\tau^{-1}T')\cong \End(\tau^{-2}T')$, but $\End(\tau^{-1}T'\oplus T_2)\not\cong \End(\tau^{-2}T'\oplus T_2)$.
\end{itemize}
\end{remark}

\subsubsection{The hereditary tilted algebras of type $\Lambda_n$}

Let $a_{ht}(\Lambda_n)$ be the number of isoclasses of basic hereditary tilted algebras of type $\Lambda_n$. We have

\begin{proposition}\label{prop:hereditary-tilted-algebras}
The number of the hereditary tilted algebras of type $\Lambda_{n}$ is
\[a_{ht}(\Lambda_{n})=\left\{\begin{array}{ll}

4&\mathrm{if}\ n=4,\\

3\times 2^{n-3}&\mathrm{if}\ n\geq 5.

\end{array}\right.\]
\end{proposition}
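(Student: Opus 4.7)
My strategy is to identify hereditary tilted algebras of type $\Lambda_n$ with orientations of the Dynkin graph $\mathbb{D}_n$ up to isomorphism, and then enumerate these orientations via Burnside's lemma. First, I would observe that a basic tilted algebra of type $\Lambda_n$ is hereditary if and only if it is isomorphic to the path algebra $kQ'$ for some orientation $Q'$ of the underlying graph of $\mathbb{D}_n$. Indeed, any hereditary $kQ'$ obtained from $\Lambda_n$ by tilting is Morita-equivalent to the path algebra of its ordinary quiver, which by the presence of a derived equivalence must be a tree of Dynkin type $\mathbb{D}_n$. Conversely, every such $kQ'$ is obtained from $\Lambda_n = kQ$ by a sequence of BGP/APR-reflections at sinks, each of which is realized as a tilt at a simple projective, so every orientation of $\mathbb{D}_n$ does occur as some $\End_{\Lambda_n}(T)$. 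Hence $a_{ht}(\Lambda_n)$ equals the number of orientations of the Dynkin graph of type $\mathbb{D}_n$ up to graph isomorphism.

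For $n\geq 5$, the graph automorphism group of $\mathbb{D}_n$ is $\Z/2 = \{1,\sigma\}$, where $\sigma$ transposes the two leaves $1$ and $2$ adjacent to vertex $3$ and fixes everything else (there are no additional symmetries since the tail $3{-}4{-}\cdots{-}n$ has length at least two). The graph carries $n-1$ edges, so there are $2^{n-1}$ orientations in total. An orientation is fixed by $\sigma$ precisely when the two edges $\{1,3\}$ and $\{2,3\}$ are oriented consistently (both towards $3$ or both away), which amounts to $2 \cdot 2^{n-3} = 2^{n-2}$ choices. Burnside's formula then yields
\[ a_{ht}(\Lambda_n) = \tfrac{1}{2}\bigl(2^{n-1} + 2^{n-2}\bigr) = 2^{n-2} + 2^{n-3} = 3 \cdot 2^{n-3}. \]

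The case $n=4$ must be treated separately because $\mathbb{D}_4$ enjoys the larger graph automorphism group $S_3$, permuting the three leaves $\{1,2,4\}$ attached to the central vertex $3$. Applying Burnside to the $8$ orientations of $\mathbb{D}_4$: the identity fixes all $8$; each of the $3$ transpositions fixes $2\cdot 2 = 4$ orientations (the two swapped edges must be oriented the same way, with two free choices for the remaining edge and the shared direction); and each of the $2$ three-cycles fixes the $2$ fully uniform orientations (all three leaf-arrows pointing towards $3$, or all away from $3$). This gives $(8 + 3\cdot 4 + 2\cdot 2)/6 = 24/6 = 4$, matching the claim.

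The main obstacle is the opening identification, specifically the surjectivity claim that every orientation of $\mathbb{D}_n$ actually arises as some $\End_{\Lambda_n}(T)$ with $T$ a tilting module. If invoking derived equivalence of hereditary Dynkin algebras feels heavy, this can instead be verified constructively by inspecting the six families of Proposition~\ref{prop:the-forms-of-tilting-modules} and identifying, within each family, the tilting modules whose associated rooted quiver with relation lies in $\cq_h$ (\ie carries no relation); the contributions of the hereditary cases then sum to the same formulas via an elementary combinatorial check, confirming the count by an entirely intrinsic argument.
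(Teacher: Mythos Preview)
Your argument is correct and takes a genuinely different route from the paper. The paper proceeds intrinsically: it invokes the classification of basic tilting modules over $\Lambda_n$ (Proposition~\ref{prop:the-forms-of-tilting-modules}), asserts that hereditary endomorphism algebras can only arise from families~(1), (2), (3), and within each of those families observes that adding the extra vertex $P(1)$ (resp.\ $P(1)\oplus P(2)$) to a hereditary rooted quiver in $\cq_h(n-2)$ introduces no relation, giving $3\cdot 2^{n-3}$; the case $n=4$ is handled by direct citation. By contrast, you bypass the tilting-module classification entirely and identify the hereditary tilted algebras with orientations of the Dynkin diagram $\mathbb{D}_n$ up to graph automorphism, then count orbits by Burnside. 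Your approach is cleaner and more conceptual---in particular, it \emph{explains} the exceptional value at $n=4$ via the jump in automorphism group from $\Z/2$ to $S_3$---and it requires only the classical fact that all orientations of a tree are connected by APR tilts (equivalently, arise as slices in the preprojective component). The paper's approach, while more laborious here, has the advantage of staying inside the explicit framework already set up for the subsequent non-hereditary count in Proposition~\ref{prop:number-of-non-hy-tilted}, where the same family-by-family bookkeeping is unavoidable.
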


\begin{proof}
If $n=4$, then by \cite[Example 3.10]{Xing21}, there are 4 hereditary tilted algebras of type $\Lambda_{4}$. If $n\geq 5$, then the hereditary tilted algebras of type $\Lambda_{n}$ can only occur in Proposition \ref{prop:the-forms-of-tilting-modules} (1), (2) and (3). In (1), by Remark \ref{rem:AR-quiver-parts}, if $T_1$ is a tilting module of form $P(2)\oplus \widetilde{T}$, where $\widetilde{T}$ is a tilting module in the wing of $P(3)$ and $\varepsilon'(\widetilde{T})\in \cq_{h}(n-2)$, then the quiver with relation of $T$ is obtained by adding an arrow from the vertex of $P(1)$ to the root of $\tau^{-1}(T_1)$. This shows that $\End(T)$ is hereditary. Similarly, in (2) and (3), if $\varepsilon'(T_2)\in \cq_{h}(n-2)$, then $\End(T)$ are also hereditary. Recall that $|\cq_h(n)|=2^{n-1}$, thus we have $a_{ht}(\Lambda_{n})=3\times 2^{n-3}$.
\end{proof}

\subsubsection{The non-hereditary tilted algebras of type $\Lambda_n$}
In this subsection, we count the number of non-hereditary tilted algebras of type $\Lambda_{n}$.
By Remark \ref{rem:tilted-algebras-non-isomorphism}, to count the number of tilted algebras of type $\Lambda_n$, we must compute the number of tilting modules that do not contain non-trivial injective direct summands. We first recall some facts on the tilting modules over $A_n$. The AR-quiver of $\mod A_{n}$ is as follows.
$$\begin{xy}
0;<3pt,0pt>:<0pt,3pt>::
(0,5)*+{}="16",
(-50,-45)*+{}="17",
(-40,-45)*+{}="18",
(10,5)*+{}="19",
(10,-5)*+{}="20",
(20,-5)*+{}="21",
(-30,-45)*+{}="22",
(-20,-45)*+{}="23",
(30,-25)*+{}="24",
(40,-25)*+{}="25",
(10,-45)*+{}="26",
(20,-45)*+{}="27",
(0,-50)*+{\mathrm{Figure \ 3: The~AR{-}quiver~of~\mod A_{n}}},
 (0,0)*+{P(1)}="1",
(-10,-10)*+{P(2)}="2",
(10,-10)*+{I(n{-}1)}="3",
(-20,-20)*+{\cdots}="4",
(0,-20)*+{\cdots}="5",
(20,-20)*+{\cdots}="6",
(-30,-30)*+{P(n{-}1)}="7",
(-10,-30)*+{\cdots}="8",
(10,-30)*+{\cdots}="9",
(30,-30)*+{I(2)}="10",
(-40,-40)*+{P(n)}="11",
(-20,-40)*+{S(n-1)}="12",
(0,-40)*+{\cdots}="13",
(20,-40)*+{S(2)}="14",
(40,-40)*+{I(1)}="15",
\ar"2";"1", \ar"2";"5", \ar"5";"3", \ar"1";"3", \ar"4";"2", \ar"3";"6",
\ar"7";"4", \ar"6";"10", \ar"5";"9", \ar"9";"14", \ar"8";"5",
\ar"11";"7", \ar"7";"12", \ar"10";"15",\ar"14";"10", \ar"12";"8",
\ar@{.}"16";"17",\ar@{.}"16";"19",\ar@{.}"19";"18",\ar@{.}"17";"18"
\ar@{.}"20";"21",\ar@{.}"21";"23",\ar@{.}"20";"22",\ar@{.}"22";"23"
\ar@{.}"24";"25",\ar@{.}"25";"27",\ar@{.}"24";"26",\ar@{.}"26";"27"
\end{xy}$$
We partition all indecomposable modules in Figure 3 into $n$ groups via dotted borders, labeling these borders sequentially from left to right as $1, \ldots, n$. For each $i\in\{1,2,\ldots,n\}$, we define $\delta(n)_i$ as the number of tilting modules over $A_n$  that include all indecomposable modules from groups labeled $\leq i$ as direct summands and contain at least one direct summand from the $i{-}th$ group. It is easy to see that
\begin{center}
$\delta(n)_{1}=\delta(n-1)_{1}=\cdots=\delta(1)_{1}=1.$
\end{center}
We denote by $t(A_n)$ the number of isoclasses of basic tilting modules over $A_n$. Then we have $\sum\limits_{i=1}^{n}\delta(n)_{i}=t(A_n)=\frac{1}{n+1}(\begin{smallmatrix}2n\\n\end{smallmatrix})$ (See \cite[Theorem 1]{ObaidNaumanFakiehRingel15}). Moreover, we have the following useful formula.

\begin{proposition} \label{prop:the-number-of-tilting-modules-of-A_{n}}
$\delta(n)_{i}=\delta(n)_{i-1} +\delta(n-1)_{i}$.
\end{proposition}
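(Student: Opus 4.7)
The plan is to establish this Pascal-type recurrence by a bijective argument partitioning the set of tilting modules counted by $\delta(n)_i$ into two disjoint pieces. Together with the boundary values $\delta(n)_1=1$ (all $n$ summands forced to be the projectives of group $1$) and $\delta(n)_i=0$ for $i>n$, this recurrence would force $\delta(n)_i$ to coincide with the ballot/Catalan-triangle entry $\tfrac{n-i+1}{n}\binom{n+i-2}{i-1}$ and yield $\sum_{i=1}^n\delta(n)_i=\tfrac{1}{n+1}\binom{2n}{n}=t(A_n)$, matching the classical count of tilting modules over $A_n$ recorded above. So it suffices to construct a natural bijection splitting $\Delta(n)_i$ into two pieces bijecting with $\Delta(n)_{i-1}$ and $\Delta(n-1)_i$, where $\Delta(n)_i$ denotes the set of isoclasses of basic tilting modules over $A_n$ contributing to $\delta(n)_i$.

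The approach I would take is as follows. Fix a distinguished indecomposable $X_i$ in group $i$, namely the unique module of group $i$ at the interface with group $i-1$ (the ``top'' vertex of the $i$-th diagonal strip, characterized as the unique indecomposable of group $i$ that admits an irreducible morphism to or from a module of group $i-1$ in the AR-quiver of Figure~3). Split $\Delta(n)_i$ according to whether $X_i$ is a direct summand of $T$. When $X_i\in T$, write $T=X_i\oplus T'$: using the wing structure of the AR-quiver around $X_i$ combined with the Happel--Ringel classification in Proposition~\ref{prop:tilting-modules-of-A}, I would identify $T'$ with a basic tilting module over the subalgebra isomorphic to $A_{n-1}$ obtained by deleting one boundary vertex, whose summand restrictions satisfy the analogous group constraints, yielding the bijection onto $\Delta(n-1)_i$. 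When $X_i\notin T$, the summand(s) of $T$ lying in group $i$ (necessarily distinct from $X_i$) can be ``shifted down'' one AR-step into group $i-1$ to produce a tilting module all of whose summands lie in groups $\le i-1$ with at least one in group $i-1$; the reverse ``shift up'' (uniquely determined by the absence of $X_i$) reconstructs $T$, giving the bijection onto $\Delta(n)_{i-1}$. Summing the two contributions gives the recurrence.

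The main obstacle will be to pin down the distinguished module $X_i$ uniformly across both the ``wing of $P(i)$'' and ``wing of $I(i-2)$'' regimes of Proposition~\ref{prop:tilting-modules-of-A}, and then to verify the bijectivity of the shift construction in the case $X_i\notin T$. The key technical check is that the shifted collection of indecomposables remains a basic tilting module---in particular, that $\tau$-rigidity is preserved and that the correct number $n$ of non-isomorphic indecomposable summands is maintained---and that no two distinct tilting modules in $\Delta(n)_i$ are sent to the same shifted tilting module. This should follow by induction on $n$ from the recursive wing-decomposition of tilting modules over $A_n$ in Proposition~\ref{prop:tilting-modules-of-A}, which rigidly controls how summands propagate between adjacent groups of the AR-quiver.
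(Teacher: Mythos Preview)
Your bijective approach has a genuine gap. In fact there are three problems, one cosmetic and two fatal.

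First, the cosmetic one: your characterization of $X_i$ as ``the unique indecomposable of group $i$ that admits an irreducible morphism to or from a module of group $i{-}1$'' does not pick out a unique module. Writing the indecomposables as $M_{a,b}$ (interval modules with $1\le a\le b\le n$), group $i$ consists of all $M_{a,n+1-i}$, and \emph{every} such module receives an irreducible map from $M_{a,n+2-i}\in$ group $i{-}1$.

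Now the fatal ones. Even taking $X_i$ to be the ``top'' module $M_{1,n+1-i}$ of group $i$, the split by presence of $X_i$ cannot give the Pascal recurrence. Take $i=n$: group $n$ contains only $M_{1,1}=S(1)$, so every $T\in\Delta(n)_n$ has $X_n$ as a summand. Your scheme then sends all of $\Delta(n)_n$ to $\Delta(n-1)_n$, which is empty --- yet $\delta(n)_n=t(A_{n-1})>0$. No choice of a single module $X_i$ in group $i$ can rescue this case.

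Independently, the ``shift'' half of your bijection does not preserve the tilting property. For $n=5$, $i=3$, take
\[
T=M_{1,5}\oplus M_{2,5}\oplus M_{5,5}\oplus M_{2,3}\oplus M_{3,3}\in\Delta(5)_3,
\]
which has $X_3=M_{1,3}\notin T$. Applying $\tau$ to the group-$3$ summands gives $M_{2,3}\mapsto M_{3,4}$ and $M_{3,3}\mapsto M_{4,4}$, producing
\[
T^{\mathrm{new}}=M_{1,5}\oplus M_{2,5}\oplus M_{5,5}\oplus M_{3,4}\oplus M_{4,4}.
\]
But $\Ext^1(M_{3,4},M_{5,5})\cong D\Hom(M_{5,5},M_{4,5})\ne 0$, so $T^{\mathrm{new}}$ is not rigid. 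The point is that $M_{5,5}$ was compatible with $M_{2,3}$ (intervals nested-or-gap-disjoint) but ceases to be compatible with its $\tau$-translate $M_{3,4}$.

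The paper's route avoids these obstacles entirely: it first reads off from the AR-quiver (via the Happel--Ringel wing decomposition of tilting modules, Proposition~\ref{prop:tilting-modules-of-A}) the summation identity
\[
\delta(n)_i=\delta(n-1)_1+\delta(n-1)_2+\cdots+\delta(n-1)_i,
\]
and then the Pascal recurrence is immediate by subtracting the same identity for $i-1$. If you want a bijective proof, aim for this summation formula rather than for the Pascal recurrence directly.
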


\begin{proof}
By the AR-quiver of $\mod A_{n}$, we obtain that
\begin{center}
$\delta(n)_{i}=\delta(n-1)_{1} +\delta(n-1)_{2}+\cdots+\delta(n-1)_{i-1}+\delta(n-1)_{i}$.
\end{center}
 Thus, we have $\delta(n)_{i}=\delta(n)_{i-1} +\delta(n-1)_{i}$.
\end{proof}

\begin{remark} \label{rem:number-of-tilting-modules}
(1) $\delta(n)_{2}=n-1$;

(2) $\delta(n)_{n-1}=\delta(n)_{n}=t(A_{n-1})$.
\end{remark}

The following table contains the first values of $\delta(n)_{i}$. The numbers in the first row of the table represent $i$.

\begin{remark}
$$\begin{tabular}{|c||c|c|c|c|c|c|c|c| p{<20>}}
\hline
$\delta(n)_{i}$ & 1 & 2 & 3 & 4 & 5 & 6 & 7 & 8\\
\hline
$\delta(3)$ &  1 & 2 & 2 & 0 & 0 & 0 & 0 & 0\\
\hline
$\delta(4)$ &  1 & 3  & 5 & 5 & 0  & 0 & 0 & 0  \\
\hline
$\delta(5)$ &  1 & 4  & 9 & 14 & 14  & 0 & 0 & 0 \\
\hline
$\delta(6)$ &  1 & 5  & 14 & 28 & 42  & 42 & 0 & 0 \\
\hline
\end{tabular} $$
\end{remark}

\begin{corollary}\label{cor:number-of-tilting-contains-I(1)}
The number of isoclasses of basic tilting modules over $A_n$ with $I(1)$ as a direct summand is $t(A_{n-1})$.
\end{corollary}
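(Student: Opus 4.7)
The plan is to recognize that basic tilting modules of $A_n$ containing $I(1)$ as a direct summand are counted precisely by $\delta(n)_n$, and then to invoke the preceding Remark, which gives $\delta(n)_n = t(A_{n-1})$. Inspecting Figure~3, the $n$-th (rightmost) group in the partition of indecomposable $\mod A_n$-objects is the singleton $\{I(1)\}$; hence the defining condition of $\delta(n)_n$---having at least one direct summand lying in the $n$-th group---reduces exactly to the condition ``$I(1)$ is a direct summand,'' since the accompanying requirement that all summands lie in groups labeled $\leq n$ is automatic.

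Combining this combinatorial observation with $\delta(n)_n = t(A_{n-1})$ immediately yields the statement. The only conceptual content is therefore the identification of the singleton group in the partition as $\{I(1)\}$, which one reads off Figure~3 together with the values in the $\delta$-table.

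If one prefers a purely algebraic route that avoids appealing to the pictorial partition, the same conclusion follows from the Geigle--Lenzing perpendicular calculus. Since $I(1)$ is a simple injective exceptional module over the hereditary algebra $A_n$, the right perpendicular category $\{I(1)\}^{\perp} \subset \mod A_n$ is equivalent to $\mod A_{n-1}$. For any basic tilting module $T = I(1) \oplus T'$ over $A_n$, the injectivity of $I(1)$ together with the rigidity of $T$ forces $T' \in \{I(1)\}^{\perp}$ with $|T'| = n-1$ and $\Ext^1(T',T') = 0$; under the equivalence $T'$ corresponds to a basic tilting module over $A_{n-1}$. Conversely, any basic tilting $\widetilde{T}$ over $A_{n-1}$ lifts to a module $T' \in \{I(1)\}^{\perp}$ and then $I(1) \oplus T'$ satisfies all the tilting conditions over $A_n$ (the cross terms $\Ext^1(I(1), T')$ vanish by the perpendicular property and $\Ext^1(T', I(1))$ by injectivity). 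So $T \mapsto T'$ is bijective, and the count equals $t(A_{n-1})$.
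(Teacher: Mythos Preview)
Your primary argument is exactly the paper's intended one: the corollary is stated without proof immediately after the Remark recording $\delta(n)_n=t(A_{n-1})$, and the only missing link is the observation that the $n$-th group in the partition of Figure~3 is the singleton $\{I(1)\}$, so that $\delta(n)_n$ literally counts tilting modules containing $I(1)$. Your perpendicular-category argument is a correct independent route that bypasses the $\delta$-bookkeeping entirely; it is more conceptual and self-contained, whereas the paper's approach is purely combinatorial and leans on the recursion already set up for $\delta(n)_i$.
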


We specify that $t(A_0)=1$, then we have the following result, See \cite[Remark 4.3]{XieYangZhang25}.

\begin{corollary}\label{cor:number-of-tilting-contains-P(n)}
The number of isoclasses of basic tilting modules over $A_n$ with $P(n)$ as a direct summand is $t(A_{n-1})$.
\end{corollary}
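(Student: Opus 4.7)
The plan is to reduce the assertion to Corollary~\ref{cor:number-of-tilting-contains-I(1)} via the natural symmetry of the linearly oriented $A_n$ quiver. First, I would observe that reversing all arrows of $1 \leftarrow 2 \leftarrow \cdots \leftarrow n$ and simultaneously relabeling vertices by $i \mapsto n+1-i$ returns the same quiver, which produces a $k$-algebra isomorphism $\psi : A_n \xrightarrow{\sim} A_n\op$. Composing with the standard $k$-linear duality $D = \Hom_k(-,k) : \mod A_n \to \mod A_n\op$ and transporting back along $\psi$, I obtain a contravariant self-equivalence $\Phi$ of $\mod A_n$ that sends $P(i)$ to $I(n+1-i)$ and $I(i)$ to $P(n+1-i)$. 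In particular $\Phi(P(n)) \cong I(1)$.

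Next, I would note that a contravariant equivalence preserves the property of being a basic tilting module: it preserves the count $|T| = |A_n|$ and dualizes $\Ext^1$, so the rigidity condition is preserved. Consequently $\Phi$ induces a bijection on the set of isoclasses of basic tilting modules over $A_n$ under which the condition ``$P(n)$ is a direct summand'' transforms into ``$I(1)$ is a direct summand''. Combining this with Corollary~\ref{cor:number-of-tilting-contains-I(1)} yields the desired count $t(A_{n-1})$. The edge case $n = 1$ gives $1 = t(A_0)$, matching the convention fixed immediately above the statement.

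The step I expect to be slightly delicate is the clean identification $\Phi(P(n)) \cong I(1)$, which requires carefully tracking both the $k$-duality $D$ and the quiver-reversing isomorphism $\psi$ on indecomposable modules; in particular one must check that pullback along $\psi$ correctly relabels the socle/top data so that injective envelopes are matched as claimed. Once this bookkeeping is pinned down, the bijection argument is formal and the claim follows immediately.
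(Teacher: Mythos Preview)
Your argument is correct. The paper itself does not give a proof of this corollary; it simply points to \cite[Remark 4.3]{XieYangZhang25}. Your route---exploiting the contravariant self-equivalence $\Phi$ of $\mod A_n$ arising from the palindrome symmetry of the linearly oriented $A_n$ quiver, and thereby reducing to Corollary~\ref{cor:number-of-tilting-contains-I(1)}---is a clean, fully self-contained alternative. The step you flag as delicate, namely $\Phi(P(n))\cong I(1)$, is in fact straightforward: the $k$-duality $D$ sends the projective at vertex $i$ to the injective at vertex $i$ (now viewed in $\mod A_n\op$), and pullback along the relabeling $i\mapsto n+1-i$ then carries this to $I(n+1-i)$ in $\mod A_n$; setting $i=n$ gives $I(1)$. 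Your observation that the contravariant equivalence preserves basic tilting modules (hereditarity makes the projective-dimension condition automatic, and $\Ext^1$ self-orthogonality and the summand count are preserved) completes the bijection.
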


Next, we compute the number of isomorphism classes of basic tilting modules over $\Lambda_{n}$ that exclude injective modules as direct summands, up to isomorphism. By Remark \ref{rem:AR-quiver-parts}, we will consider the subquiver $\overrightarrow{\mathbb{A}}_{n-1}$: $$\begin{xy}
(-10,0)*+{1}="1",
(0,0)*+{3}="3",
(12,0)*+{\cdots}="4",
(28,0)*+{n-1}="5",
(42,0)*+{n}="6",
\ar"3";"1", \ar"4";"3", \ar"5";"4", \ar"6";"5",
\end{xy}$$ in $Q$. 
Here, we focus on the indecomposable modules over the path algebra of the subquiver $\overrightarrow{\mathbb{A}}_{n-1}$ within the AR-quiver of $\mod \Lambda_{n}$. In this context, $I(1)$ stands as the only indecomposable injective module.

For $m\in \mathbb{N}$, we define $t^{m}(A_{n})$ as the total number of isomorphism classes consisting of two types of modules: basic tilting modules $M$ over $A_{n}$ which do not contain injective modules as direct summands and modules $N$ satisfying $N=\tau^{-m}_{A_{n}} M$ with $|N|=|A_{n}|$, where $A_{n}$ corresponds to the path algebra of the subquiver of $Q$. Moreover, we denote by $t^{m}(\Lambda_{n})$ the number of isomorphism classes of basic tilting modules $N$ over $\Lambda_{n}$ such that $|\tau^{-m}_{\Lambda_{n}} N|=|\Lambda_{n}|$. Then, we have

\begin{proposition} \label{prop:the-tilting-modules-of-A-not-contain-injective-modules}
$t^{m}(A_{n})=\sum\limits_{i=1}^{n-m}\delta(n)_{i}\times(n-m-i+1)$.
\end{proposition}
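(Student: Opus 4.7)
The plan is to stratify the isomorphism classes counted by $t^{m}(A_{n})$ according to an ``AR-position'' invariant naturally associated with the underlying tilting module. Recall that $\delta(n)_{i}$ was defined as the number of basic tilting modules over $A_{n}$ whose indecomposable summands all lie in groups labeled $\le i$ with at least one summand in the $i$-th group. The sum $\sum_{i=1}^{n-m}\delta(n)_{i}(n-m-i+1)$ is then interpreted as counting pairs $(M,k)$, where $M$ is a tilting module over $A_{n}$ in the $i$-th stratum and $k \in \{0,1,\ldots,n-m-i\}$; the correspondence between such pairs and isomorphism classes in $t^{m}(A_{n})$ is implemented via $(M,k)\mapsto \tau^{-k}M$.

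First I would verify that for a fixed tilting module $M$ whose maximal group is $i \le n-m$, the translates $\tau^{-k}M$ for $k=0,1,\ldots,n-m-i$ are exactly those objects satisfying both $|\tau^{-k}M|=|A_{n}|$ and $|\tau^{-m}(\tau^{-k}M)| = |A_{n}|$. Using the AR-quiver description in Figure~3, the translate $\tau^{-1}$ shifts an indecomposable module in group $j$ to group $j+1$, so an indecomposable in group $j$ is annihilated by $\tau^{-\ell}$ exactly when $j+\ell>n$. Hence the maximal group of $\tau^{-k}M$ equals $i+k$, and the joint requirements $i+k\le n$ and $i+k+m\le n$ collapse to $k\le n-m-i$. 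This yields the factor $(n-m-i+1)$.

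Next I would show that distinct pairs $(M,k)$ and $(M',k')$ give non-isomorphic modules $\tau^{-k}M$, $\tau^{-k'}M'$. This follows from the injectivity of $\tau^{-1}$ on the relevant region of the AR-quiver and from the fact that the maximal group invariant is read off from the support and behaves additively under $\tau^{-1}$. Summing the contributions across $i=1,\ldots,n-m$ then produces
\[
t^{m}(A_{n})=\sum_{i=1}^{n-m}\delta(n)_{i}(n-m-i+1),
\]
as required. A cross-check via the recursion $\delta(n)_{i}=\delta(n)_{i-1}+\delta(n-1)_{i}$ of Proposition \ref{prop:the-number-of-tilting-modules-of-A_{n}} shows that $t^{m}(A_{n})=t^{m+1}(A_{n})+t^{m-1}(A_{n-1})$, providing an inductive verification consistent with the counting.

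The main obstacle is ensuring that the stratification by maximal group exhausts all objects in $t^{m}(A_{n})$ without overcounting. This requires a careful bookkeeping between the two ``types of modules'' in the definition of $t^{m}(A_{n})$ (the tilting module $M$ itself and its shift $N=\tau^{-m}M$), and a verification that the bijection $(M,k)\mapsto \tau^{-k}M$ hits every isomorphism class exactly once. The description of the AR-quiver of $\mathsf{mod}\,A_{n}$ together with the defining property of $\delta(n)_{i}$ makes this bookkeeping transparent.
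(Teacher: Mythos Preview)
Your proposal is correct and follows essentially the same approach as the paper's proof. The paper's argument is a one-sentence sketch that only treats the case $i=1$ explicitly (observing that the unique tilting module with maximal group $1$ is $P(1)\oplus\cdots\oplus P(n)$ and that it contributes $\delta(n)_{1}\times(n-m)$ shifts), leaving the remaining strata implicit; your version supplies the general-$i$ argument, the injectivity check for the map $(M,k)\mapsto\tau^{-k}M$, and an additional consistency check via the recursion, all of which are sound elaborations of the same underlying idea.
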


\begin{proof}
Clearly, the tilting module corresponding to $\delta(n)_1$ has all projective modules of $A_n$ as direct summands. Thus, by the AR-quiver of $\mod A_n$, we conclude that the number of modules M over $A_n$ satisfying $|\tau^{-m}_{A_n} M|$ = $|A_n|$ is $\delta(n)_1 \times (n-m)$.
\end{proof}

According to the classification of the tilting modules over $\Lambda_n$ in Proposition \ref{prop:the-forms-of-tilting-modules} , we have the following result.

\begin{proposition} \label{prop:number-of-tilting-not-contain-injective}
$t^{m}(\Lambda_{n})=(t_{1})^{m}(\Lambda_{n})+(t_{2})^{m}(\Lambda_{n})+(t_{3})^{m}(\Lambda_{n})+(t_{4})^{m}(\Lambda_{n}),$ where
\begin{itemize}
\item[(1)] $(t_{1})^{m}(\Lambda_{n})=\delta(n-1)_{1}\times(n-m-1)+\cdots+\delta(n-1)_{n-2}\times(2-m)$;

\item[(2)] $(t_{2})^{m}(\Lambda_{n})=\delta(n-1)_{1}\times(n-m-2)+\cdots+\delta(n-1)_{n-3}\times(2-m)$;

\item[(3)] $(t_{3})^{m}(\Lambda_{n})=t(A_{n-4})\times t_{\Lambda_{4}}^{m+1}+\cdots+t(A_{1})\times t_{\Lambda_{n-1}}^{m+1}$;

\item[(4)] $(t_{4})^{m}(\Lambda_{n})=t(A_{n-4})\times t_{A_{3}}^{m+1}+\cdots+t(A_{1})\times t_{A_{n-2}}^{m+1}$.
\end{itemize}
\end{proposition}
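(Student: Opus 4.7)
The plan is to proceed by case analysis on the six shapes for basic tilting modules over $\Lambda_n$ listed in Proposition~\ref{prop:the-forms-of-tilting-modules}, to count for each shape the tilting modules $T$ satisfying $|\tau^{-m}_{\Lambda_n}T|=|\Lambda_n|$, and to group the resulting counts into $(t_1)^m,(t_2)^m,(t_3)^m,(t_4)^m$ as follows: $(t_1)^m$ collects form~(1); $(t_2)^m$ collects forms~(2) and~(3); $(t_3)^m$ collects the portions of forms~(4)--(6) in which the factor $T_1$ is a genuine tilting module over a smaller $\Lambda_{i-1}$; and $(t_4)^m$ collects the remaining portions in which $T_1$ is supported on an $A$-type subquiver. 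Because these six shapes are pairwise disjoint by Proposition~\ref{prop:the-forms-of-tilting-modules} (combined with Remark~\ref{rem:tilted-algebras-non-isomorphism}(2)), the four partial sums add to $t^m(\Lambda_n)$ with no double counting. The combinatorial engine throughout is the column-stratification argument behind Proposition~\ref{prop:the-tilting-modules-of-A-not-contain-injective-modules}: a tilting whose leftmost occupied group in the AR-quiver of the relevant $A$-type subquiver is the $i$-th column survives exactly $\tau^{-j}$ for $j=0,1,\ldots,n-i$ before losing full rank.

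For $(t_1)^m$, I would take $T=P(1)\oplus\tau^{-1}T_1$ with $T_1$ a tilting in the wing of $P(2)$. By Remark~\ref{rem:I(1)-injective}(1) the summand $\tau^{-1}T_1$ already sits one step past the injective row of $\mod A_{n-1}$ identified in Remark~\ref{rem:AR-quiver-parts}(1), so the condition $|\tau^{-m}_{\Lambda_n}T|=|\Lambda_n|$ becomes the condition that $T_1$ admits an $(m+1)$-fold $\tau^{-}$-shift inside $\mod A_{n-1}$. Summing $\delta(n-1)_i$ times the number of allowed shifts $(n-m-i)$ over $i\in\{1,\ldots,n-2\}$ gives the stated formula. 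The argument for $(t_2)^m$ is analogous, with forms~(2) and~(3) accounted for together; since two projectives $P(1),P(2)$ are already fixed, one column is shaved off and the resulting coefficient becomes $(n-m-i-1)$, with the index ranging up to $n-3$.

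For the recursive terms I would invoke Remark~\ref{rem:I(1)-injective}(2). In form~(4) with $i>4$, the factor $T_1$ is a tilting module over $\Lambda_{i-1}$ (and analogously for the pieces of forms~(5),(6) whose left block lies in the $\Lambda$-region), while $T_2$ is a tilting in the wing of $P(i)$, which is a copy of $A_{n-i}$. The shift $\tau^{-m}_{\Lambda_n}(T_1\oplus T_2)$ has full rank precisely when $T_1$ admits an $(m{+}1)$-fold $\tau^{-}$-shift inside $\Lambda_i$ and $T_2$ is any tilting of $A_{n-i}$, so multiplying gives $t(A_{n-i})\cdot t^{m+1}_{\Lambda_i}$ and summing over $4\leq i\leq n-1$ yields $(t_3)^m$. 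The exceptional case $i=4$ of form~(4), combined with the analogous $A$-type decompositions of forms~(5) and~(6), contributes $(t_4)^m=\sum_{i=3}^{n-2}t(A_{n-i-1})\cdot t^{m+1}_{A_i}$ by the same device applied inside the $A_n$-subcategory, using Corollaries~\ref{cor:number-of-tilting-contains-I(1)} and~\ref{cor:number-of-tilting-contains-P(n)} to count the wing-part.

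The main technical obstacle will be justifying the "$+1$" in the exponent of $t^{m+1}$ appearing in $(t_3)^m$ and $(t_4)^m$: this reflects the geometric fact that the top row of the wing of $P(i)$, viewed inside the AR-quiver of $\Lambda_n$, plays the role of the injective row of the smaller quiver whose tilting modules are being counted, so that an injective-free tilting module there corresponds inside $\Lambda_n$ to one whose $\tau^{-(m+1)}$-translate still sits in $\mod\Lambda_n$. A secondary bookkeeping point is the matching of forms~(5) and~(6) with the correct summand: the presence or absence of $P(1)$ and $P(2)$ as explicit factors does not alter the recursion, because those positions are already captured on the $\Lambda_{i-1}$-side by the inductive $t^{m+1}_{\Lambda_i}$. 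Once these identifications are in place, each of the four partial counts reduces to a direct application of the stratification formula of Proposition~\ref{prop:the-tilting-modules-of-A-not-contain-injective-modules}, and the proof is complete.
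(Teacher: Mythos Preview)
Your overall strategy—case analysis on the six forms of Proposition~\ref{prop:the-forms-of-tilting-modules} combined with the column stratification of Proposition~\ref{prop:the-tilting-modules-of-A-not-contain-injective-modules}—is exactly the paper's approach. However, your assignment of forms to the four summands is inverted relative to the paper, and your computation for form~(1) is internally inconsistent. In the paper's proof, forms (2), (3) and (6) are grouped together: each can be rewritten as $T=P(1)\oplus T'$ with $T'$ a genuine tilting module in the wing of $P(2)$ (no extra $\tau^{-1}$ applied), so surviving $\tau^{-m}$ requires only an $m$-fold shift of $T'$ inside $A_{n-1}$, and this yields $(t_1)^m$. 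Form~(1), by contrast, already carries the built-in $\tau^{-1}$ on $T_1$, so it needs an $(m{+}1)$-fold shift and produces $(t_2)^m$. You have this backwards: your own argument for form~(1) correctly asks for an $(m{+}1)$-shift, but then you write the coefficient as $(n-m-i)$; an $(m{+}1)$-shift in $A_{n-1}$ gives coefficient $(n-m-1-i)$, which is the $(t_2)^m$ formula, not $(t_1)^m$.

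For the recursive terms the paper assigns form~(4) to $(t_3)^m$ and form~(5) to $(t_4)^m$; form~(6) does not belong in either recursion, because $P(1)\oplus P(2)\oplus T_1\oplus T_2$ sits entirely in the $A_{n-1}$ subcategory of Remark~\ref{rem:AR-quiver-parts} and is already absorbed into the $(t_1)^m$ count above. Your plan to distribute forms (4)--(6) across $(t_3)^m$ and $(t_4)^m$ according to whether $T_1$ is ``$\Lambda$-type'' or ``$A$-type'' therefore misplaces form~(6) and will not reproduce the stated formulas. Once you swap the roles of $(t_1)^m$ and $(t_2)^m$, move form~(6) into $(t_1)^m$, and keep forms~(4) and~(5) separately feeding $(t_3)^m$ and $(t_4)^m$, the rest of your argument (including the ``$+1$'' in the exponents) goes through exactly as the paper intends.
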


\begin{proof}
If $T$ is of one of the forms (2), (3) and (6) in Proposition \ref{prop:the-forms-of-tilting-modules}, then by Proposition \ref{prop:tilting-modules-of-A}, $T=P(1)\oplus T'$, where $T'$ can be viewed as a tilting module in the wing of $P(2)$. Using the AR-quiver of $\mod A_{n-1}$, the number of isomorphism classes of basic tilting modules over $\Lambda_n$ in this case is therefore given by $\delta(n-1)_{1}\times(n-m-1)+\cdots+\delta(n-1)_{n-2}\times(2-m)$. 
Similarly, if $T$ is of one of the forms (1), (4) and (5) in Proposition \ref{prop:the-forms-of-tilting-modules}, we can derive the counts for $(t_{2})^{m}(\Lambda_{n})$, $(t_{3})^{m}(\Lambda_{n})$ and $(t_{4})^{m}(\Lambda_{n})$, respectively. 
\end{proof}

For convenience, we give the first values of $t^{m}(A_{n})$ and $t^{m}(\Lambda_{n})$ in the following table.

\begin{remark}
$$\begin{tabular}{|c||c|c|c|c|c|c| p{<20>}}
\hline
m & $t^{m}(A_{3})$ & $t^{m}(A_{4})$ & $t^{m}(A_{5})$ & $t^{m}(\Lambda_{4})$ & $t^{m}(\Lambda_{5})$ & $t^{m}(\Lambda_{6})$ \\
\hline
1 &  4 & 14 & 48 & 5 & 21 & 83 \\
\hline
2 &  1 & 5  & 20 & 1 & 6  & 28 \\
\hline
\end{tabular} $$
\end{remark}

\begin{corollary}
The number of isomorphism classes of basic tilting modules over $\Lambda_{n}$ that exclude injective modules as direct summands, up to isomorphism, is $t^{1}(\Lambda_{n})$. 
\end{corollary}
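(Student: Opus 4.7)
The plan is to derive the corollary directly from the definition of $t^{1}(\Lambda_{n})$ together with a standard property of the Auslander--Reiten translation. Recall that $\tau^{-1}_{\Lambda_{n}}$ vanishes exactly on the indecomposable injective modules and restricts to a bijection between non-injective indecomposable $\Lambda_{n}$-modules and non-projective indecomposable $\Lambda_{n}$-modules; in particular, applied to pairwise non-isomorphic non-injective indecomposables it produces pairwise non-isomorphic indecomposables.

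First I would fix a basic tilting $\Lambda_{n}$-module $N$ and decompose it as $N=N_{\mathrm{inj}}\oplus N_{\mathrm{non}}$, where $N_{\mathrm{inj}}$ gathers all injective indecomposable summands of $N$ and $N_{\mathrm{non}}$ has no injective direct summand. Since $|N|=|\Lambda_{n}|=n$, we have $|N_{\mathrm{inj}}|+|N_{\mathrm{non}}|=n$. The key step is then the identity $\tau^{-1}_{\Lambda_{n}}N\cong\tau^{-1}_{\Lambda_{n}}N_{\mathrm{non}}$, with the right-hand side being a direct sum of exactly $|N_{\mathrm{non}}|$ pairwise non-isomorphic indecomposable modules. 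Hence $|\tau^{-1}_{\Lambda_{n}}N|=|N_{\mathrm{non}}|=n-|N_{\mathrm{inj}}|$.

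Next I would observe that the defining condition $|\tau^{-1}_{\Lambda_{n}}N|=|\Lambda_{n}|=n$ becomes $|N_{\mathrm{inj}}|=0$, i.e., $N$ has no injective direct summand. Comparing with the definition of $t^{1}(\Lambda_{n})$ stated immediately above the corollary, the counted set is therefore precisely the set of isomorphism classes of basic tilting modules over $\Lambda_{n}$ containing no injective direct summand, which is what the corollary asserts.

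I do not expect a real obstacle: the statement is essentially a translation of the definition through the bijective behaviour of $\tau^{-1}$ on non-injective indecomposables. The only point requiring a little care is that the number of indecomposable summands of $\tau^{-1}N$ really drops by the number of injective summands killed, which follows because $\tau^{-1}$ respects isomorphism classes on the non-injective part.
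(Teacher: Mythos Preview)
Your argument is correct and matches the paper's treatment: the paper states this corollary without proof, regarding it as immediate from the definition of $t^{1}(\Lambda_{n})$ just above, and your unpacking via $|\tau^{-1}_{\Lambda_{n}}N|=n-|N_{\mathrm{inj}}|$ is exactly the intended justification.
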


\begin{remark}
Let $T$ be a tilting module of the form (4) in Proposition \ref{prop:the-forms-of-tilting-modules} over $\Lambda_n$. Put $i=5$. Consequently, $T_1$ can be regarded as a tilting module over $\Lambda_4$. Note that there exist two tilting modules $T'$ and $T''$ over $\Lambda_4$ which do not contain injective modules as direct summands and have isomorphic endomorphism algebras, see \cite[Section 3.3.1]{Xing21}. In this context, $\End(\tau^{-1}T'\oplus \widetilde{T})\cong\End(\tau^{-1}T''\oplus \widetilde{T})$, where $\widetilde{T}$ is a tilting module of the wing of $P(5)$.
\end{remark}  

Let $a_{nht}(A_{n})$ and $a_{nht}(\Lambda_{n})$ be the number of isoclasses of basic non-hereditary tilted algebras of type $A_n$ and $\Lambda_n$, respectively. By \cite[Proposition 4.10]{XieYangZhang25}, we have 
\begin{align}
a_{nht}(A_n)&=|\cq(n)|-|\cq_h(n)|=\frac{1}{n+1}(\begin{smallmatrix}2n\\n\end{smallmatrix})-2^{n-1},\\
a_{t}(A_n)&=\frac{1}{n+1}(\begin{smallmatrix}2n\\n\end{smallmatrix})+[1-(-1)^{n}]\times 2^{[\frac{n}{2}]-2}-2^{n-2}\nonumber.
\end{align}
By Corollary \ref{cor:bijective-of-tilting-and-tilted-algebra}, we can compute the number of non-hereditary tilted algebras of type $\Lambda_n$.

\begin{proposition}\label{prop:number-of-non-hy-tilted}
\begin{align*}
a_{nht}(\Lambda_{n})&=a_{nht}(A_{n-2})+a_{nht}(A_{n-1})+t(A_{n-2})+\sum\limits^{n-4}_{i=1}t(A_{i})\times(t(A_{n-i-2})-t(A_{n-i-3}))\\
&+\sum\limits^{n}_{j=5} t^{1}(\Lambda_{j-1})\times t(A_{n+1-j})+\sum\limits^{n}_{k=4}t(A_{n+1-k})\times(t(A_{k-2})-t(A_{k-3})).
\end{align*}
\end{proposition}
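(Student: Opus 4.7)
The plan is to combine Corollary~\ref{cor:bijective-of-tilting-and-tilted-algebra}, which identifies $\ca_{nht}(\Lambda_n)$ with the set of isomorphism classes of basic tilting modules over $\Lambda_n$ whose endomorphism algebra has at least one non-trivial relation, with the six-fold classification of tilting modules in Proposition~\ref{prop:the-forms-of-tilting-modules}. I would count the non-hereditary contribution inside each of the six families separately and then add them.

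For each form I would first write down the Gabriel quiver with relations of $\End(T)$ using the AR-quivers of $\mod \Lambda_n$ and $K^{[-1,0]}(\proj \Lambda_n)$ in Figures~1 and~2 together with the algorithm of \cite{Xing21}, and then read off the relations. For Forms~(1)--(3) of Proposition~\ref{prop:the-forms-of-tilting-modules}, in which $T$ contains $P(1)$ (and possibly $P(2)$) together with a tilting module over an $A$-type subquiver, the Gabriel quiver of $\End(T)$ is obtained from that of the $A$-type piece by attaching a pendant arrow (or a two-arrow $\mathbb{D}$-shaped branch) at the root of the $\tau^{-1}$-shift. Appealing to Corollary~\ref{cor:A_n-bijective} and the closed forms for $a_{nht}(A_{n-1})$ and $a_{nht}(A_{n-2})$, together with Corollaries~\ref{cor:number-of-tilting-contains-I(1)} and~\ref{cor:number-of-tilting-contains-P(n)} to detect when the attachment produces a fresh zero relation, I would recover the first block $a_{nht}(A_{n-2})+a_{nht}(A_{n-1})+t(A_{n-2})$ of the formula.

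For Forms~(4)--(6), $T$ decomposes as $T_1\oplus T_2$ (possibly with $P(1)$ or $P(1)\oplus P(2)$ adjoined), where $T_2$ is a tilting module of the wing of $P(i)$ (equivalently, of type $A$) and $T_1=\tau^{-1}T'$ is a $\tau^{-1}$-shift of a tilting module over a smaller $\Lambda_{i-1}$-type or $A$-type subquiver. The non-hereditary tilting modules in these families split according to whether the smaller piece $T_1$ or $T_2$ is already non-hereditary, or both are hereditary but the gluing arrow from the root of $T_2$ into $T_1$ creates a new zero relation (this happens precisely when the attachment vertex in $T_1$ is a projective summand). Using Corollary~\ref{cor:number-of-tilting-contains-P(n)} to count the tilting modules over $A_m$ having $P(m)$ as a summand (namely $t(A_{m-1})$) and Proposition~\ref{prop:number-of-tilting-not-contain-injective} to track the factor $t^{1}(\Lambda_{j-1})$ of tilting modules over $\Lambda_{j-1}$ without injective summands, one recovers the three summation terms on the right-hand side.

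The main obstacle will be the careful bookkeeping of when hereditary components produce a non-hereditary total: (i) identifying for each form the precise relations appearing in $\End(T)$, (ii) avoiding double counting arising from the isomorphisms $\End(T)\cong \End(\tau^{-1}T)$ noted before Proposition~\ref{prop:the-forms-of-tilting-modules}, and (iii) separating the subtle cases in which two different choices of $T_1$ give isomorphic $\End(T_1)$ but non-isomorphic $\End(T_1\oplus T_2)$, as flagged in Remark~\ref{rem:tilted-algebras-non-isomorphism}(2). Once these relations are read off from the AR-quivers, the remainder of the argument amounts to matching each of the six families to the corresponding summand in the claimed formula.
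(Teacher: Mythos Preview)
Your overall strategy---identify $\ca_{nht}(\Lambda_n)$ with the set of isoclasses of basic tilting modules having a non-hereditary endomorphism algebra via Corollary~\ref{cor:bijective-of-tilting-and-tilted-algebra}, then run through the six forms in Proposition~\ref{prop:the-forms-of-tilting-modules}---is exactly the paper's approach. The gap is in your allocation of the six forms to the six summands of the formula: the split ``Forms~(1)--(3) give the non-summation block; Forms~(4)--(6) give the three sums'' is incorrect and will not close.

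Concretely, the paper groups the forms as follows. Form~(1) alone is responsible for \emph{three} pieces, because $T_1$ is a tilting module of the wing of $P(2)$ (type $A_{n-1}$) and must itself be decomposed via Proposition~\ref{prop:tilting-modules-of-A}: the sub-case $T_1=P(2)\oplus T'$ with $T'$ in the wing of $P(3)$ gives $a_{nht}(A_{n-2})$; the sub-case $T'$ in the wing of $\tau^{-1}P(1)$ gives $t(A_{n-2})-t(A_{n-3})$ (every such $\End(T)$ carries a commutativity relation); and the sub-case where $T_1$ is of type~(2) in Proposition~\ref{prop:tilting-modules-of-A} gives the \emph{first} summation $\sum_{i=1}^{n-4}t(A_i)(t(A_{n-i-2})-t(A_{n-i-3}))$. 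So that sum does not come from Forms~(4)--(6) at all. Next, Forms~(2), (3) and~(6) are treated \emph{together}: collectively they are exactly the tilting modules of shape $P(1)\oplus T'$ with $T'$ in the wing of $P(2)$, and their non-hereditary count is $a_{nht}(A_{n-1})$. Form~(6) therefore contributes to the $a_{nht}(A_{n-1})$ term, not to any of the sums. Form~(4) splits as $i=4$ (contributing $t(A_{n-3})$, which combines with the $t(A_{n-2})-t(A_{n-3})$ above to produce the displayed $t(A_{n-2})$) and $i\ge 5$ (contributing the $\sum_{j=5}^n t^1(\Lambda_{j-1})t(A_{n+1-j})$ term). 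Finally, Form~(5) alone yields the last summation $\sum_{k=4}^n t(A_{n+1-k})(t(A_{k-2})-t(A_{k-3}))$.

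In short, once you actually write down the relations you will find that the combinatorics forces the regrouping $\{(1)\},\ \{(2),(3),(6)\},\ \{(4)\},\ \{(5)\}$ rather than your proposed $\{(1),(2),(3)\},\ \{(4),(5),(6)\}$; the key step you are missing is the secondary decomposition of $T_1$ inside Form~(1) using Proposition~\ref{prop:tilting-modules-of-A}.
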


\begin{proof}

Case (1): $T$ is of the form (1) in Proposition \ref{prop:the-forms-of-tilting-modules}. By Proposition \ref{prop:tilting-modules-of-A},  
\begin{itemize}
  \item If $T_1=P(2)\oplus T'$, where $T'$ is a basic tilting module of the wing of $P(3)$, then by the AR-quiver of $\mod \Lambda_{n}$, the number of non-hereditary tilted algebras $\End(T)$ of type $\Lambda_{n}$ is $a_{nht}(A_{n-2})$; 
  \item If $T_1=P(2)\oplus T'$, where $T'$ is a basic tilting module of the wing of $\tau^{-1}P(1)$ (see Remark \ref{rem:AR-quiver-parts}), then the number of non-hereditary tilted algebras $\End(T)$ of type $\Lambda_{n}$ is $t(A_{n-2})-t(A_{n-3})$. Indeed, if $T'$ is a basic tilting module of the wing of $\tau^{-1}P(1)$, then the tilted algebras $\End(T)$ all satisfy a commutative relation. By Remark \ref{rem:I(1)-injective} and Corollary \ref{cor:number-of-tilting-contains-I(1)}, we obtain the formula for $\End(T)$ in this subcase; 
  \item If $T_1$ is of the form (2) in Proposition \ref{prop:tilting-modules-of-A}, by Remark \ref{rem:AR-quiver-parts}, then $\End(T)$ has a relation through $\tau^{-1}P(2)$. Thus, the number of non-hereditary tilted algebras $\End(T)$ of type $\Lambda_{n}$ is $\sum\limits^{n-4}_{i=1}t(A_{i})\times(t(A_{n-i-2})-t(A_{n-i-3}))$.
\end{itemize}

Case (2): $T$ take the forms (2), (3) and (6) as specified in Proposition \ref{prop:the-forms-of-tilting-modules}. In this case, by Proposition \ref{prop:tilting-modules-of-A}, $T$ can be regarded as the form $P(1)\oplus T'$, where $T'$ is a tilting module of the wing of $P(2)$. Thus, the count of non-hereditary tilted algebras $\End(T)$ of type $\Lambda_{n}$ is $a_{nht}(A_{n-1})$.

Case (3): $T$ is of the form (4) in Proposition \ref{prop:the-forms-of-tilting-modules}.
\begin{itemize}
  \item If $i=4$, then $T_1=\tau^{-1}T'$, where $T'$ can be viewed as a tilting module over $\begin{xy}
(-10,0)*+{1}="2",
(0,0)*+{3}="3",
(10,0)*+{2}="4",
\ar"2";"3", \ar"4";"3",
\end{xy}$. This implies that $\End(T_1\oplus T_2)$ has a relation $$\begin{xy}
 (-15,15)*+{P(4)}="1",
(0,0)*+{I(n)}="2",
(15,15)*+{I(1) (I(2))}="3",
(-6,6)*+{ }="4",
(6,6)*+{ }="5",
\ar_{\beta}"1";"2", \ar_{\alpha}"2";"3", \ar@/^0.6pc/@{.}"4";"5",
\end{xy}$$ with $\alpha\beta=0$. Thus, the number of non-hereditary tilted algebras $\End(T_1\oplus T_2)$ of type $\Lambda_{n}$ is $t(A_{n-3})$.
  \item If $i\geq 5$, then $T_1=\tau^{-1}T'$, where $T'$ can be viewed as a tilting module over $\Lambda _i$. Note that $\Hom(P(i),\tau^{-m}P(1))=0$ or $\Hom(P(i),\tau^{-m}P(2))=0$ with $m\geq 2$. Thus, the number of non-hereditary tilted algebras $\End(T_1\oplus T_2)$ of type $\Lambda_{n}$ is $\sum\limits^{n}_{i=5} t^{1}(\Lambda_{i-1})\times t(A_{n+1-i})$.
\end{itemize}

Case (4): $T$ is of the form (5) in Proposition \ref{prop:the-forms-of-tilting-modules}.
In this case, there are no paths from $P(i)$ to $\tau^{-j}P(1)$ or $\tau^{-j}P(2)$ for some positive integer $j$. Thus, by Corollary \ref{cor:number-of-tilting-contains-I(1)}, the number of non-hereditary tilted algebras $\End(P(1)\oplus T_1\oplus T_2)$ of type $\Lambda_{n}$ is $\sum\limits^{n}_{i=4}t(A_{n+1-i})\times(t(A_{i-2})-t(A_{i-3}))$. This completes the proof.
\end{proof}

Now we can give a formula of the number of tilted algebras of type $\Lambda_{n}$.

\begin{proposition} \label{prop:tilted-algebras-of-type-D}
\[a_{t}(\Lambda_{n})
=a_{ht}(\Lambda_{n})+a_{nht}(\Lambda_{n})
=\left\{\begin{array}{ll}

7&\mathrm{if}\ n=4,\\

3\times 2^{n-3}+a_{nht}(\Lambda_{n})&\mathrm{if}\ n\geq 5,

\end{array}\right.\]
where $a_{nht}(\Lambda_{n})$ comes from Proposition \ref{prop:number-of-non-hy-tilted}.
\end{proposition}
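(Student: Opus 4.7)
The plan is to decompose $\ca_t(\Lambda_n)$ into its hereditary and non-hereditary parts and then invoke the two preceding propositions. Concretely, every tilted algebra of type $\Lambda_n$ is either hereditary or non-hereditary, so $\ca_t(\Lambda_n)=\ca_{ht}(\Lambda_n)\sqcup\ca_{nht}(\Lambda_n)$ and therefore $a_t(\Lambda_n)=a_{ht}(\Lambda_n)+a_{nht}(\Lambda_n)$. For $n\geq 5$, I would substitute the formula $a_{ht}(\Lambda_n)=3\cdot 2^{n-3}$ established in Proposition~\ref{prop:hereditary-tilted-algebras} and leave $a_{nht}(\Lambda_n)$ as the quantity computed in Proposition~\ref{prop:number-of-non-hy-tilted}. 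This directly yields the claimed expression in the $n\geq 5$ branch of the piecewise formula.

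For the base case $n=4$, Corollary~\ref{cor:bijective-of-tilting-and-tilted-algebra} fails (as noted in Remark~\ref{rem:tilted-algebras-non-isomorphism}(1)), so the counting procedure used in Proposition~\ref{prop:number-of-non-hy-tilted} does not apply verbatim. Instead I would refer to the explicit enumeration of tilting modules and tilted algebras over $\Lambda_4$ carried out in Xing's paper (\cite[Section 3.3.1 and Example 3.10]{Xing21}): there one finds $4$ hereditary tilted algebras and $3$ further non-hereditary ones, giving $a_t(\Lambda_4)=7$. It is worth checking this by hand using the AR-quiver of $K^{[-1,0]}(\proj\Lambda_4)$ pictured in Figure~2 (specialized to $n=4$), to make sure isomorphic endomorphism algebras arising from different tilting modules are identified correctly.

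The main obstacle is not the assembly itself, which is a one-line addition, but ensuring consistency of the bookkeeping conventions across Propositions~\ref{prop:hereditary-tilted-algebras} and \ref{prop:number-of-non-hy-tilted}: both counts rely on the convention adopted just before Proposition~\ref{prop:the-forms-of-tilting-modules} that tilting modules with isomorphic endomorphism algebras are identified. Thus I would briefly verify that no tilting module contributes simultaneously to $a_{ht}(\Lambda_n)$ and $a_{nht}(\Lambda_n)$ (which is clear since a tilted algebra is either hereditary or not), and that the $n=4$ exceptional case is genuinely isolated so that the case split in the statement is correct. With these checks in place, the proof reduces to assembling the two formulas.
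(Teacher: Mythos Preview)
Your proposal is correct and matches the paper's approach: the paper treats this proposition as an immediate assembly of Propositions~\ref{prop:hereditary-tilted-algebras} and~\ref{prop:number-of-non-hy-tilted} without a separate proof, and handles the exceptional case $n=4$ in the subsequent Example~\ref{exm:tilted-algebras-of-5-and-6} by noting (with reference to \cite{Xing21}) that the formula of Proposition~\ref{prop:number-of-non-hy-tilted} overcounts by one isomorphism coincidence, yielding $4+3=7$.
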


%\item[(2)] Note that in Proposition \ref{prop:number-of-non-hy-tilted}~(3), we need the number of tilting modules of $D_{m-1}$ which not contain injective modules as direct summands. Because in this case, if $T$ is a tilting module of $D_{m-1}$, then $\tau^{-1}T$ are the direct summands of  tilting modules of $D_{n}$. On the other hand, the subquiver of $Q$ of type $\mathbb{D}_{m-1}$ is also symmetric at vertices $1$ and $2$, we know that the tilting modules of $D_{m-1}$ of the forms $P(1)\oplus \tau^{-1}T_{1}$ and $P(2)\oplus \tau^{-1}T_{2}$ have the same tilted algebras. Thus, we obtain that tilting modules of $D_{m-1}$ of this two forms provide us the same tilted algebras of $D_{n}$. In this case, we call the tilting modules of $D_{n}$ induce by this two forms are isomorphisms. We remind the readers that even though the tilting modules $T$ and $\tau^{-1}T$ of $D_{m-1}$ have the same tilted algebras, however, they provide us two different tilted algebras of $D_{n}$.
\begin{example}\label{exm:tilted-algebras-of-5-and-6}
\begin{itemize}
\item[(1)] If $n=4$, then by Proposition \ref{prop:number-of-non-hy-tilted}, we have 4 non-hereditary tilted algebras of type $\Lambda_4$. However, in this case, the non-hereditary tilted algebras $\End(P(1)\oplus \tau^{-1}T_1)$ and $\End(P(1)\oplus P(2)\oplus P(4)\oplus\tau^{-1}S(3))$ are isomorphic. Thus, by Proposition \ref{prop:hereditary-tilted-algebras}, there are 7 tilted algebras of type $\Lambda_{4}$. See \cite[Example 3.10]{Xing21}.
    
\item[(2)] If $n=5$, then $a_{nht}(\Lambda_{5})=5+2+5+6+2+3=23$. Thus, $a_{t}(\Lambda_{5})=12+23=35$. See \cite[Section 3.3.2]{Xing21}.

\item[(3)] If $n=6$, then $a_{nht}(\Lambda_{6})=20+5+10+21+26+5+6+9=102$. Thus, $a_{t}(\Lambda_{6})=24+102=126$.
\end{itemize}
\end{example}

\subsubsection{Some subset of $\ca_t(\Lambda_n)$} Throughout this subsection, assume $n\geq 5$.
Let $\ct^1(\Lambda_n)$ denote the set of isoclasses of  basic tilting modules over $\Lambda_n$ that have $P(n)$ as a direct summand. Define $\ca_t^1(\Lambda_n)=\{\End(T)\mid T\in\ct^1(\Lambda_n)\}$ and let $a_t^1(\Lambda_n)=|\ca_t^1(\Lambda_n)|$.

\begin{lemma}
\label{lem:tilting-modules-and-tilted-algebras-with-P(n)}
$\varepsilon\colon\ct^1(\Lambda_n)\to\ca_t^1(\Lambda_n)$ is bijective.
\end{lemma}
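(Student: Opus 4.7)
The surjectivity of $\varepsilon\colon \ct^1(\Lambda_n)\to\ca_t^1(\Lambda_n)$ is automatic from the definition of the target, so the substantive content is injectivity: given $T,T'\in \ct^1(\Lambda_n)$ with $\End(T)\cong\End(T')$, I plan to deduce $T\cong T'$ by splitting into two cases according to whether $\End(T)$ is hereditary.

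If $\End(T)$ is non-hereditary, then Corollary~\ref{cor:bijective-of-tilting-and-tilted-algebra} immediately yields $T\cong T'$, because $\varepsilon$ is already known to restrict to a bijection from $\ct_{nh}(\Lambda_n)$ to $\ca_{nht}(\Lambda_n)$ for $n\geq 5$.

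Suppose instead that $\End(T)$ is hereditary. I would first pin down the form of $T$ via Proposition~\ref{prop:the-forms-of-tilting-modules}. Because $P(n)$ is projective and is not of the shape $\tau^{-1}M$ for any indecomposable $M$ in the AR-quiver, forms (1) and (2) of that proposition cannot have $P(n)$ as a direct summand, since their non-$P(1),P(2)$ parts lie entirely in the image of $\tau^{-1}$ applied to the relevant wings. Combined with the observation from the proof of Proposition~\ref{prop:hereditary-tilted-algebras} that hereditary tilted algebras of type $\Lambda_n$ arise only from forms (1), (2), and (3), this forces $T$ to be of form (3): $T=P(1)\oplus P(2)\oplus T_2$ where $T_2$ is a hereditary basic tilting module in the wing of $P(3)$ that contains $P(n)$. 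The rooted quiver $\varepsilon'(T_2)\in\cq_h(n-2)$ can then be read off from the Gabriel quiver of $\End(T)$: the root (the vertex corresponding to $P(3)$) is the unique vertex having arrows to both the summands $P(1)$ and $P(2)$. Hence $\varepsilon'(T_2)=\varepsilon'(T_2')$ as rooted quivers, whence $T_2\cong T_2'$ and $T\cong T'$.

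The main obstacle is the verification that forms (4)--(6) of Proposition~\ref{prop:the-forms-of-tilting-modules} cannot produce a hereditary endomorphism algebra when $P(n)$ occurs as a summand. I expect this to follow from the fact that the pieces $T_1=\tau^{-1}T'$ and the wing summand $T_2$ must be linked by morphisms factoring through the projective-injective $P(i)$ of the wing, thereby forcing a zero relation in $\End(T)$. This is consistent with the count $a_{ht}(\Lambda_n)=3\times 2^{n-3}$ recorded in Proposition~\ref{prop:hereditary-tilted-algebras}, and can be checked directly by inspection of the AR-quiver of $\mod\Lambda_n$.
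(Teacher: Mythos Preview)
Your proof is correct and follows the same skeleton as the paper: surjectivity is by definition, and injectivity is split into the non-hereditary case (handled by Corollary~\ref{cor:bijective-of-tilting-and-tilted-algebra}) and the hereditary case. Where you diverge is in the treatment of the hereditary case. The paper simply observes that the \emph{only} element of $\ct^1(\Lambda_n)$ with hereditary endomorphism algebra is the regular module $P(1)\oplus\cdots\oplus P(n)$, so injectivity on that fibre is trivial. This is because, in form~(3) of Proposition~\ref{prop:the-forms-of-tilting-modules}, a hereditary $T_2$ in the wing of $P(3)$ corresponds to a path in the genealogical tree starting at the root, and such a path reaches $P(n)$ only by going ``left'' at every step --- forcing $T_2=P(3)\oplus\cdots\oplus P(n)$. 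Your argument instead allows for several hereditary $T_2$'s containing $P(n)$ and then recovers $\varepsilon'(T_2)$ from the Gabriel quiver of $\End(T)$ via the unique trivalent vertex. That works, but it is more machinery than needed once one notices there is exactly one such $T_2$. Your closing ``main obstacle'' paragraph is also redundant: the fact that forms~(4)--(6) never yield hereditary endomorphism algebras is already established in the proof of Proposition~\ref{prop:hereditary-tilted-algebras}, which you cite two sentences earlier, so no further verification is required.
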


\begin{proof}
By Proposition \ref{prop:the-forms-of-tilting-modules}, elements in $\ca_t^1(\Lambda_n)$ are all non-hereditary except $\End(P(1)\oplus P(2)\oplus\ldots \oplus P(n))$. 
Thus, by Corollary \ref{cor:bijective-of-tilting-and-tilted-algebra}, $\varepsilon\colon\ct^1(\Lambda_n)\to\ca_t^1(\Lambda_n)$ is bijective.
\end{proof}

\begin{lemma}\label{lem:the-number-of-tilted-with-P(n)}
\begin{align*}
a_t^1(\Lambda_n)&=t(\Lambda_{n-2})+t(\Lambda_{n-4})+\sum\limits^{n}_{i=5} t^{1}(\Lambda_{i-1})\times t(A_{n-i})\\
&+\sum\limits^{n}_{j=4}t(A_{n-j})\times(t(A_{j-2})-t(A_{j-3})).
\end{align*}
\end{lemma}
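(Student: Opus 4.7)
The plan is to invoke the bijection $\varepsilon\colon \ct^1(\Lambda_n) \to \ca_t^1(\Lambda_n)$ from Lemma~\ref{lem:tilting-modules-and-tilted-algebras-with-P(n)}, which reduces the problem to counting (up to isomorphism) basic tilting modules $T$ over $\Lambda_n$ whose indecomposable decomposition contains $P(n)$. I would then appeal to Proposition~\ref{prop:the-forms-of-tilting-modules} and organise the count by the six possible forms of $T$, eliminating the forms that cannot have $P(n)$ as a summand and enumerating the survivors.

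The first step is to eliminate forms (1) and (2) of Proposition~\ref{prop:the-forms-of-tilting-modules}. Both involve a summand of the shape $\tau^{-1}T'$, together with $P(1)$ and possibly $P(2)$. Since $\tau^{-1}$ sends any non-injective indecomposable to a non-projective indecomposable in the AR-quiver of $\mod\Lambda_n$, no summand of $\tau^{-1}T'$ can equal the projective $P(n)$; and of course $P(n)\neq P(1), P(2)$. Hence these forms contribute nothing.

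The second step is to count forms (3)--(6). In every such form, $P(n)$ must appear as a summand of the factor $T_2$, which is a tilting module of the wing of $P(i)$ for some $3 \leq i \leq n$. Since each such wing is of $A$-type, Corollary~\ref{cor:number-of-tilting-contains-P(n)} (applied to the wing) counts such $T_2$ in terms of $t(A_\bullet)$ with the appropriate index. Combined with the count of valid complementary data, I expect the total count to decompose as follows: form (3) contributes the term $t(\Lambda_{n-2})$, coming from the wing of $P(3)$ interpreted as the $(n-2)$-rank hereditary piece containing both branches of $Q$; form (6) at $i=4$ contributes $t(\Lambda_{n-4})$, by pairing a tilting of $A_3$ over the subquiver $1 \leftarrow 3 \to 2$ with a tilting of the wing of $P(4)$ containing $P(n)$; forms (4), (5), (6) at $i \geq 5$ jointly contribute $\sum_{i=5}^n t^1(\Lambda_{i-1})\cdot t(A_{n-i})$, using Proposition~\ref{prop:number-of-tilting-not-contain-injective} to count those $T'$ admitting no injective summand so that $\tau^{-1}T'$ is well-defined of full rank; finally, the residual contribution obtained by tracking whether $I(1)$ does or does not appear as a summand of $\tau^{-1}T'$ collapses, via Corollary~\ref{cor:number-of-tilting-contains-I(1)}, into $\sum_{j=4}^n t(A_{n-j})(t(A_{j-2}) - t(A_{j-3}))$.

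The main obstacle will be the careful bookkeeping across forms (4), (5), (6): these three cases differ only by whether $P(1)$ and $P(2)$ appear explicitly among the summands, and one must account precisely for when a summand of $\tau^{-1}T'$ coincides with the injective $I(1)$ in order to extract the differences $t(A_{j-2}) - t(A_{j-3})$ without double-counting. Once these contributions are assembled and summed, the bijection $\varepsilon$ (whose validity rests on Corollary~\ref{cor:bijective-of-tilting-and-tilted-algebra}, hence the hypothesis $n \geq 5$) transfers the resulting count of tilting modules directly into the claimed count of isoclasses of tilted algebras.
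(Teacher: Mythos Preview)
Your overall strategy matches the paper's: invoke the bijection of Lemma~\ref{lem:tilting-modules-and-tilted-algebras-with-P(n)} to reduce to counting tilting modules in $\ct^1(\Lambda_n)$, then perform a case analysis. The paper, however, does not run the analysis directly on the six forms of Proposition~\ref{prop:the-forms-of-tilting-modules}; instead it refers back to the four Cases already established in the proof of Proposition~\ref{prop:number-of-non-hy-tilted}, which group those six forms into exactly the four packets yielding the four terms of the formula. With that grouping the bookkeeping is trivial (Case~(2) gives the first term via Corollary~\ref{cor:number-of-tilting-contains-P(n)}, Case~(3) with $i=4$ the second, Case~(3) with $i\geq 5$ the third, Case~(4) the fourth), and the paper dispatches the last three with a single ``similarly''.

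Your attempt to match terms to individual forms is where things go astray. The first term does not come from form~(3) alone but from forms~(2),~(3),~(6) combined---these are precisely Case~(2) of Proposition~\ref{prop:number-of-non-hy-tilted}'s proof, namely $T=P(1)\oplus T'$ with $T'$ tilting in the wing of $P(2)$; moreover the wing of $P(3)$ is an $A$-type wing, not a $D$-type piece ``containing both branches of $Q$''. The second term comes from form~(4) at $i=4$ (where $T_1=I(1)\oplus I(2)\oplus I(n)$ is forced by Remark~\ref{rem:I(1)-injective}(2)), not from form~(6). The third term comes from form~(4) at $i\geq 5$ alone, not from a merge of forms~(4),~(5),~(6). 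The fourth term comes cleanly from form~(5), with no ``$I(1)$-tracking residual'' required. Reorganising your count along the Cases of Proposition~\ref{prop:number-of-non-hy-tilted} will straighten all of this out with essentially no additional work.
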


\begin{proof}
According to the proof of Proposition \ref{prop:number-of-non-hy-tilted}, elements of $\ct^1(\Lambda_n)$ can only occur in Case (2), (3) and (4). In case (2), $T'$ is a tilting module of the wing of $P(2)$, by Corollary \ref{cor:number-of-tilting-contains-P(n)} and Lemma \ref{lem:tilting-modules-and-tilted-algebras-with-P(n)}, the number of isoclasses of basic tilting modules over $\Lambda_n$ with $P(n)$ as a direct summand is $t(A_{n-2})$. Similarly, we can obtain the other cases.
\end{proof}

Let $\ca_t^2(\Lambda_n)$ denote the set of isoclasses of endomorphism algebras of basic tilting modules over $\Lambda_n$ which does not contain $P(n)$ as a direct summand and let $a_t^2(\Lambda_n)=|\ca_t^2(\Lambda_n)|$. Then 
\begin{lemma}
$a_t^2(\Lambda_n)=a_t(\Lambda_n)-a_t^1(\Lambda_n)$.
\end{lemma}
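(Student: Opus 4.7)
The proof is a direct bookkeeping argument. Under the standing convention recorded just before Proposition~\ref{prop:the-forms-of-tilting-modules} (``we assume that the tilting modules with isomorphism endomorphism algebras are isomorphisms''), the map $\varepsilon\colon\ct(\Lambda_n)\to\ca_t(\Lambda_n)$ sending a basic tilting module to its endomorphism algebra is a bijection. The plan is to use this bijection to transfer the obvious partition on the module side to a partition on the algebra side.

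My first step would be to split the set $\ct(\Lambda_n)$ according to whether or not $P(n)$ appears as a direct summand: write $\ct(\Lambda_n)=\ct^1(\Lambda_n)\sqcup\ct^2(\Lambda_n)$, where $\ct^1(\Lambda_n)$ consists of the basic tilting modules containing $P(n)$ and $\ct^2(\Lambda_n)$ of those not containing it. This is a genuine disjoint union of sets of isoclasses of modules, so $|\ct(\Lambda_n)|=|\ct^1(\Lambda_n)|+|\ct^2(\Lambda_n)|$.

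Next I would push this partition across $\varepsilon$. Because $\varepsilon$ is a bijection, the images $\ca_t^1(\Lambda_n)=\varepsilon(\ct^1(\Lambda_n))$ and $\ca_t^2(\Lambda_n)=\varepsilon(\ct^2(\Lambda_n))$ satisfy $\ca_t(\Lambda_n)=\ca_t^1(\Lambda_n)\sqcup\ca_t^2(\Lambda_n)$; here disjointness is crucial, and it follows immediately from injectivity of $\varepsilon$, since a single isoclass of tilting module cannot simultaneously have and not have $P(n)$ as a direct summand. Taking cardinalities then gives the claimed identity $a_t^2(\Lambda_n)=a_t(\Lambda_n)-a_t^1(\Lambda_n)$.

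There is no real obstacle here; the only subtle point is the appeal to the convention which promotes the surjection $\varepsilon$ (with respect to actual isomorphism classes of algebras) to a bijection, so that the partition on the module side descends cleanly to the algebra side. Without that convention one would only obtain the inequality $a_t^2(\Lambda_n)\leq a_t(\Lambda_n)-a_t^1(\Lambda_n)+|\ca_t^1(\Lambda_n)\cap\ca_t^2(\Lambda_n)|$, but under the convention this intersection is empty and equality holds.
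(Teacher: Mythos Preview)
Your overall shape matches the paper's: both arguments come down to the union $\ca_t(\Lambda_n)=\ca_t^1(\Lambda_n)\cup\ca_t^2(\Lambda_n)$ together with $\ca_t^1(\Lambda_n)\cap\ca_t^2(\Lambda_n)=\emptyset$. The difference is in how the empty intersection is justified, and here your argument has a genuine gap.

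You invoke the convention before Proposition~\ref{prop:the-forms-of-tilting-modules} to promote $\varepsilon$ to a bijection, and then argue that ``a single isoclass of tilting module cannot simultaneously have and not have $P(n)$ as a direct summand.'' But these two steps are in tension. Under that convention an ``isoclass'' in $\ct(\Lambda_n)$ is an equivalence class of genuine tilting modules identified precisely when their endomorphism algebras are isomorphic; there is nothing in the convention that prevents such a class from containing one representative with $P(n)$ and another without. In other words, the convention makes $\varepsilon$ bijective tautologically, but at the cost of making the partition $\ct^1(\Lambda_n)\sqcup\ct^2(\Lambda_n)$ ill-defined on those equivalence classes. Your final sentence essentially asserts the very disjointness you are trying to prove.

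The paper closes this gap by citing Corollary~\ref{cor:bijective-of-tilting-and-tilted-algebra} and Lemma~\ref{lem:tilting-modules-and-tilted-algebras-with-P(n)}. Corollary~\ref{cor:bijective-of-tilting-and-tilted-algebra} gives injectivity of $\varepsilon$ on the non-hereditary part, so if some algebra lay in $\ca_t^1(\Lambda_n)\cap\ca_t^2(\Lambda_n)$ and were non-hereditary, the two tilting modules producing it would have to coincide---impossible since one contains $P(n)$ and the other does not. The proof of Lemma~\ref{lem:tilting-modules-and-tilted-algebras-with-P(n)} handles the remaining hereditary case by observing that the only hereditary member of $\ca_t^1(\Lambda_n)$ is $\Lambda_n$ itself. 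These two inputs are exactly what is needed, and your proof should invoke them rather than the convention alone.
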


\begin{proof}
By Corollary \ref{cor:bijective-of-tilting-and-tilted-algebra} and Lemma \ref{lem:tilting-modules-and-tilted-algebras-with-P(n)}, $\ca_t^1(\Lambda_n)\cap \ca_t^2(\Lambda_n)=\emptyset$. Since $\ca_t(\Lambda_n)=\ca_t^1(\Lambda_n)\cup \ca_t^2(\Lambda_n)$, it follows that $a_t^2(\Lambda_n)=a_t(\Lambda_n)-a_t^1(\Lambda_n)$.
\end{proof}

\begin{remark}\label{rem:some-subsets}
We can also consider the sets $\ca_t^1(A_n)$ and $\ca_t^2(A_n)$. Let $a_t^1(A_n)=|\ca_t^1(A_n)|$ and $a_t^2(A_n)=|\ca_t^2(A_n)|$. Then, by \cite[Lemma 4.13]{XieYangZhang25}, we have $a_t^2(A_{n})=a_{t}(A_{n})-t(A_{n-1})+1$ for any $n\geq 2$. Moreover, let $\ca_t^3(A_n)$ denote the set of isoclasses of endomorphism algebras of basic tilting modules over $A_n$ which contains $P(2)$ as a direct summand and let $a_t^3(A_n)$ denote its cardinality. Then $a_t^3(A_n)=a_t(A_{n-1})$. 
For $n=1,2$, put $\ca_t^4(A_n)=\ca_t(A_n)$, which has one element only. For $n\geq 3$, let $\ca_t^4(A_n)=\ca_t^A(A_n)\cup\{\End(P(1)\oplus\ldots\oplus P(n-2)\oplus P(n)\oplus \tau^{-2}P(n))\}$.  Note that this extra element belongs to $\ca_t^1(A_n)$ but not to $\ca_t^2(\Lambda_n)$. By \cite[Lemma 4.15]{XieYangZhang25}, we obtain that the cardinality $a_t^{4}(A_n)$ of $\ca_t^4(A_n)$
$$a_t^{4}(A_n)=
\begin{cases} 1 & \text{if } n=1,2,\\
a_{t}(A_{n})-t(A_{n-1})+2 & \text{if }n\geq 3.
\end{cases}
$$
\end{remark}

\subsection{Silted algebras of type $\Lambda_{n}$} \label{subsec:silted-algebras-of-Lambda_{n}}
In this subsection, we give a classification of the silted algebras of type $\Lambda_n$ and compute the number of silted algebras of type $\Lambda_{n}$, up to isomorphism. 

\subsubsection{The classification of silted algebras of type $\Lambda_{n}$}
\label{subsec:the-classification-of-silted-algebras-of-Lambda_{n}}

Since all tilting modules over $\Lambda_n$ are 2-term silting complexes,  all tilted algebras of type $\Lambda_n$ are silted algebras. It follows that $\ca_t(\Lambda_n)$ is a subset of $\ca_s(\Lambda_n)$. Let $S$ be a 2-term silting complex over $\Lambda_n$. By Proposition \ref{prop:classification-of-silting}, $S$ belongs to one of the following three cases:
\begin{itemize}
  \item [(I)] $S=P(1)[1]\oplus T$, where $T$ is a basic tilting module of the wing of $P(2)$;
  \item[(II)] $S=P(1)[1]\oplus P(2)[1]\oplus T$, where $T$ is a basic tilting module of the wing of $P(3)$;
  \item[(III)] $S=T_1\oplus T_2$, where $T_{1}$ can be viewed as a basic 2-term tilting complex over the path algebra of some subquiver of $Q$ and $T_{2}$ is a basic tilting module of the wing of $P(i)$ for any $4\leq i\leq n$. 
\end{itemize}
For $k=\mathrm{I,II,III}$, put
\begin{align*}
\ca_s^k(\Lambda_n)&=\{\End(S)\mid \text{$S$ belongs to the family (k)}\}/\cong.
\end{align*}
It is clear that 
\[
\ca_s(\Lambda_n)=\ca_t(\Lambda_n)\cup\ca_s^{\rm I}(\Lambda_n)\cup \ca_s^{\rm II}(\Lambda_n)\cup\ca_s^{\rm III}(\Lambda_n).
\]

Case (I): If $S=P(1)[1]\oplus T$, where $T$ is a basic tilting module of the wing of $P(2)$, then by the classification of the tilting modules over $A_n$ in Proposition \ref{prop:tilting-modules-of-A}, we have the following three subcases:
\begin{itemize}
  \item [(a)] If $T=P(2)\oplus T'$ (where $T'$ is a tilting module of the wing of $P(3)$), then $$\Hom(T,P(1)[1])=\Hom(P(1)[1],T)=0.$$ This implies $\End(S)=\End(P(1)[1])\times \End(T)$. Denote by $\ca_s^{\rm{Ia}}(\Lambda_n)$ the set of isoclasses of $\End(S)$ for such $S$'s. 
      Thus, $\ca_s^{\rm{Ia}}(\Lambda_n)=\ca_t^3(A_{n{-}1})\times_s\ca_t(A_{1})$;
  \item [(b)] If $T=P(2)\oplus T'$ (where $T'$ is a tilting module of the wing of $I(n{-}1)$ in the AR-quiver $\mod A_{n-1}$), then by Remark \ref{rem:AR-quiver-parts}, $\Hom(T',P(1)[1])\neq 0$ and $\End(S)$ all satisfy a commutative relation.  Denote by $\ca_s^{\rm{Ib}}(\Lambda_n)$ the set of isoclasses of $\End(S)$ for such $S$'s. Hence, $\ca_s^{\rm{Ib}}(\Lambda_n)=\ca_t(\mathbb{D}_n)$;
  \item [(c)] If $T$ is of the form (2) of Proposition \ref{prop:tilting-modules-of-A}, then we claim that $\End(S)$ is a strictly shod algebra (See Section \ref{secsec:strictly-shod-algebras}). Denote by $\ca_s^{\rm{Ic}}(\Lambda_n)$ the set of isoclasses of $\End(S)$ for such $S$'s. Thus, $\ca_s^{\rm{Ic}}(\Lambda_n)=\ca_{ss}(\Lambda_n)$.
\end{itemize}

To summarise, we have $$\ca_s^{\rm I}(\Lambda_n)=\ca_s^{\rm{Ia}}(\Lambda_n)\cup \ca_s^{\rm{Ib}}(\Lambda_n)\cup \ca_s^{\rm{Ic}}(\Lambda_n).$$

Case (II): If $S=P(1)[1]\oplus P(2)[1]\oplus T$, where $T$ is a basic tilting module of the wing of $P(3)$, then $$\Hom(T,P(1)[1])=\Hom(P(1)[1],T)=0~~\mathrm{and}~~\Hom(T,P(2)[1])=\Hom(P(2)[1],T)=0.$$ This implies that 
$$\ca_s^{\rm II}(\Lambda_n)=(\ca_t(A_{n-2})\times_s\ca_t(A_{1}))\times_s\ca_t(A_{1}).$$

Case (III): If $S=T_1\oplus T_2$, then $\Hom(T_1,T_2)=0=\Hom(T_2,T_1)$, so $\End(S)=\End(T_1)\times\End(T_2)$. By Proposition \ref{prop:classification-of-silting}, we have the following two subcases:
\begin{itemize}
  \item [(a)] $i=4$. In this subcase,  $\End(T_1)$ is a tilted algebra of type $A_3$.  Denote by $\ca_s^{\rm{IIIa}}(\Lambda_n)$ the set of isoclasses of $\End(S)$ for such $S$'s. Then, $\ca_s^{\rm{IIIa}}(\Lambda_n)=\ca_t(A_{n-3})\times_s\ca_t(A_{3})$;
  \item [(b)] $i>4$. In this subcase, $\End(T_1)$ is a tilted algebra of type $\Lambda_{i{-}1}$ for any $5\leq i\leq n$. Denote by $\ca_s^{\rm{IIIb}}(\Lambda_n)$ the set of isoclasses of $\End(S)$ for such $S$'s. Then, $\ca_s^{\rm{IIIb}}(\Lambda_n)=\bigcup\limits_{i=5}^{n}\ca_t(\Lambda_{i-1})\times_s\ca_t(A_{n-i+1})$.
\end{itemize}
Then, we have 
$$\ca_s^{\rm III}(\Lambda_n)=\ca_s^{\rm{IIIa}}(\Lambda_n)\cup\ca_s^{\rm{IIIb}}(\Lambda_n).$$

\subsubsection{The proof of Theorem \ref{thm:classification-of-silted-algebras-of-D}}
\label{sec:the-proof-of-theorem-4.1}
In this subsection, we prove Theorem \ref{thm:classification-of-silted-algebras-of-D}. Put 
\begin{center}
$\cb_1=\ca_t(\Lambda_n)$,
$\cb_2=\ca_s^{\rm{Ib}}(\Lambda_n)$,
$\cb_3=\ca_s^{\rm{IIIb}}(\Lambda_n)$,
$\cb_4=\ca_s^{\rm{Ia}}(\Lambda_n)$,
\end{center}
\begin{center}
$\cb_5=\ca_s^{\rm{II}}(\Lambda_n)$,
$\cb_6=\ca_s^{\rm{IIIa}}(\Lambda_n)$,
$\cb_7=\ca_s^{\rm{Ic}}(\Lambda_n)$.
\end{center}
By the classification of silted algebras of type $\Lambda_{n}$ in section \ref{subsec:the-classification-of-silted-algebras-of-Lambda_{n}}, 
\[
\ca_s(\Lambda_n)=\ca_t(\Lambda_n)\cup\ca_s^{\rm I}(\Lambda_n)\cup \ca_s^{\rm II}(\Lambda_n)\cup\ca_s^{\rm III}(\Lambda_n)=\cb_1\cup\cb_2\cup\cb_3\cup\cb_4\cup\cb_5\cup\cb_6\cup\cb_7.
\]
Moreover, we have 
\begin{itemize}
  \item [(1)] $\cb_1=\ca_t(\Lambda_n)$;
  \item [(2)] $\cb_2=\ca_t(\mathbb{D}_n)$;
  \item [(3)] $\cb_3=\bigsqcup_{m=4}^{n-1}(\ca_t(\Lambda_m)\times\ca_t(A_{n-m}))$;
  \item [(4)] $\cb_4=\ca_t^{3}(A_{n-1})\times_s\ca_t(A_{1})$;
  \item [(5)] $\cb_5=(\ca_t(A_{n-2})\times_s\ca_t(A_{1}))\times_s\ca_t(A_{1})$;
  \item [(6)] $\cb_6=\ca_t(A_{n-3})\times_s\ca_t(A_{3})$;
  \item [(7)] $\cb_7=\ca_{ss}(\Lambda_n)$. 
\end{itemize}
This completes the proof.

\subsubsection{Strictly shod algebras of type $\Lambda_{n}$}\label{secsec:strictly-shod-algebras}
In this subsection, we prove the claim of Case (Ic) in Section \ref{subsec:the-classification-of-silted-algebras-of-Lambda_{n}}. Let $A$ be a finite-dimensional hereditary algebra and let $T$ be a tilting $A$-module. A classical result states that $\mathrm{gl.dim} \End(T)\leq 2$. Additionally, for a 2-term silting complex $P$ in $K^{b}(\proj A)$, Buan and Zhou \cite[Theorem 1.1 (a)]{BuanZhou18} established $\mathrm{gl.dim} \End(P)\leq 3$. Thus, by Theorem \ref{thm: silted-algebra-to-strictly-shod-algebra}, we only need to study connected silted algebras with global dimension $3$.

To give the main result in this subsection, we require some preliminaries. Let $\widetilde{Q}$ be a finite quiver without loops. An interesting question concerns the global dimension of the path algebra $K\widetilde{Q}$ under different admissible ideals $I$. Poettering \cite{Poettering10} showed in 2010 that there exists an admissible ideal $I$ such that $\mathrm{gldim}(K\widetilde{Q}/I) \leq 2$. Moreover, if $A_{m}$ is a subquiver of $\widetilde{Q}$, then there exists an admissible ideal $I$ such that $\mathrm{gldim}(K\widetilde{Q}/I) \leq k$, where $k, m \in \mathbb{N}$ with $2 \leq k < m$. Expanding on Poettering's results, Yang and Zhang \cite{YangZhang19} studied the global dimension of Nakayama algebras of type $\mathbb{A}_{n}$ and $\widetilde{\mathbb{A}}_{n}$. We begin by recalling definitions from their paper.

Let $K\widetilde{Q}/I$ be the path algebra of type $\mathbb{A}_{n}$. For $i<j$, define the relation $[i,j]$ as $p=\alpha_{i}\alpha_{i+1}\cdots\alpha_{j-1}$. The admissible ideal $I$ is then expressed as $I=\langle[i_{s},j_{s}]|s=1,\ldots,n\rangle$, where $i_{1}<\cdots<i_{n}$ and $j_{1}<\cdots<j_{n}$.

\begin{definition}
For two relations $[i,j]$ and $[r,s]$ satisfying $i<s$, we say they \textit{intersection} if $i<r<j<s$.
\end{definition}

\begin{definition} Let $I=\langle p_{s}=[i_{s},j_{s}]|s=1,\ldots,n\rangle$. A family of $m$ relations $\{p_{s},\ldots,p_{s+m-1}\}$ is said to form an {\it effective intersection} if:

(1) For all $t\in \{s,\ldots,s{+}m{-}2\}$, consecutive relations $p_{t}$ and $p_{t+1}$ intersect;

(2) Each $p_{t}$ (with $t\in\{s,\ldots,s{+}m{-}1\}$) intersects exclusively with its immediate neighbors $p_{t-1}$ and $p_{t+1}$.

\end{definition}
We define $N$ as the maximum value among the numbers of all effective intersection relations in $I$. As shown in \cite[Theorem 2.1]{YangZhang19} the global dimension of $K\widetilde{Q}/I$ is determined by $N$. To aid readability, we include a proof below.

\begin{theorem}\label{thm:the-global-dimension-of-algebra-by-intersect-relations}
Let $K\widetilde{Q}/I$ be an algebra of type $\mathbb{A}_{n}$ and let $I=\langle p_{s}=[i_{s},j_{s}]|s=1,\ldots,n\rangle$. Then $\mathrm{gldim}(K\widetilde{Q}/I)=N+1$.
\end{theorem}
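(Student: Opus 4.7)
My plan is to compute $\mathrm{pd}\,S(i)$ for every simple module $S(i)$ and show the maximum over $i$ equals $N+1$. Since $K\widetilde{Q}/I$ is a Nakayama algebra (the underlying quiver is linearly oriented $\mathbb{A}_n$), every indecomposable module---and in particular every syzygy of a simple---is uniserial. Setting $j(i):=\min(\{j_s:i_s\geq i\}\cup\{n+1\})$, the projective $P(i)$ has composition factors $S(i),S(i+1),\ldots,S(j(i)-1)$, and by induction $\Omega^k S(i)$ is uniserial with top $S(s_k)$ and length $\ell_k$, determined by $(s_0,\ell_0)=(i,1)$ and the recursion
\[
s_{k+1}=s_k+\ell_k,\qquad \ell_{k+1}=j(s_k)-s_k-\ell_k.
\]
The resolution terminates at the smallest $m$ with $\ell_{m+1}=0$, giving $\mathrm{pd}\,S(i)=m$.

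At each step $k$ with $j(s_k)\leq n$ the projective $P(s_k)$ is cut off by the unique relation $p_{\sigma(k)}$ having $i_{\sigma(k)}\geq s_k$ and $j_{\sigma(k)}=j(s_k)$, and $\sigma(k)$ is in fact the smallest-indexed such candidate (by the monotonicity of the $j_s$). The crux of the proof is the identity
\[
s_{k+2}=s_k+\ell_k+\ell_{k+1}=s_k+\bigl(j(s_k)-s_k\bigr)=j(s_k)=j_{\sigma(k)},
\]
which has two consequences. First, for any $r\geq 2$, $i_{\sigma(k+r)}\geq s_{k+r}\geq s_{k+2}=j_{\sigma(k)}$, so $p_{\sigma(k)}$ and $p_{\sigma(k+r)}$ never intersect (this automatically enforces the exclusivity condition in the definition of effective intersection). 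Second, if two consecutive distinct relations $p_{\sigma(k)},p_{\sigma(k+1)}$ already fail to intersect, then $\sigma(k+1)$ remains a candidate at step $k+2$, forcing $\sigma(k+2)=\sigma(k+1)$ and $\ell_{k+3}=0$, so the resolution terminates one step later.

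Combining these, the distinct relations appearing in the resolution of $S(i)$ form a chain in which any non-intersection can only occur at the final consecutive pair, so $\mathrm{pd}\,S(i)=1+(\text{size of the maximum effective sub-family of the chain})$; since every such sub-family is an effective intersection family in $I$, one obtains $\mathrm{pd}\,S(i)\leq N+1$ for every $i$, hence $\mathrm{gldim}(K\widetilde{Q}/I)\leq N+1$. For the reverse inequality, I would take a maximum effective intersection family $\{p_{t_1},\ldots,p_{t_N}\}$ (ordered by index), set $i:=i_{t_1}$, and use the greedy nature of the syzygy recursion---together with the identity $s_{k+2}=j_{\sigma(k)}$---to show that the chain $\sigma(0),\ldots,\sigma(N-1)$ produced is an effective intersection family of size $N$ (possibly different from the initial family but of the same cardinality), so $\mathrm{pd}\,S(i)=N+1$.

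The main obstacle will be establishing the identity $s_{k+2}=j_{\sigma(k)}$ and tracing out its two consequences with care, especially verifying that the greedy syzygy chain always attains the maximum effective size and correctly handling the end-of-quiver boundary (when the recursion passes beyond all relation-starting vertices, contributing a tail of one or two extra steps to $\mathrm{pd}\,S(i)$ without invoking a new relation). Once the identity and its immediate consequences are in hand, both inequalities reduce to routine bookkeeping with the uniserial structure.
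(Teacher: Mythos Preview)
Your approach is essentially the paper's: both compute the projective dimension of each simple by analysing its minimal projective resolution. The paper does this tersely by writing the resolution down directly, whereas you derive it from the syzygy recursion; your key identity $s_{k+2}=j_{\sigma(k)}$ is exactly the mechanism behind the paper's displayed resolution (and in fact reveals that the second term there should read $P(i_r+1)$ rather than $P(i_{r+1})$---your recursion gives the correct term).

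There is, however, a genuine gap, and it is one the paper's sketch shares. The paper's definition of an \emph{effective intersection} family explicitly requires consecutive indices: the family is $\{p_s,p_{s+1},\ldots,p_{s+m-1}\}$. But your greedy chain $\sigma(0),\sigma(1),\ldots$ need not have consecutive indices: one gets $\sigma(k+1)>\sigma(k)+1$ whenever $i_{\sigma(k)+1}<s_{k+1}=j_{\sigma(k-1)}$, i.e.\ whenever $p_{\sigma(k-1)}$ also intersects $p_{\sigma(k)+1}$. In that situation your ``maximum effective sub-family of the chain'' is not an effective intersection family in the stated sense, and the bound $\mathrm{pd}\,S(i)\le N+1$ does not follow. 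Concretely, on $\mathbb{A}_9$ with minimal relations $[1,4],[2,7],[3,8],[5,9]$, every consecutive-index family of size $\ge 3$ violates exclusivity (since $p_1,p_3$ and $p_2,p_4$ both intersect), so $N=2$; yet the greedy chain from $i=1$ is $\sigma(0)=1,\ \sigma(1)=2,\ \sigma(2)=4$ (skipping~$3$), and one computes $\mathrm{pd}\,S(1)=4>N+1$. Thus the theorem as literally stated is false. It becomes correct---and your argument goes through verbatim---if one relaxes the definition to allow families $\{p_{t_1},\ldots,p_{t_m}\}$ with arbitrary increasing indices, imposing the intersection/exclusivity conditions on neighbours in this ordering rather than on index-neighbours. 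In the paper's sole application (Theorem~\ref{thm:classification-of-strictly-shod-algebras}) only two relations are in play, so the discrepancy never surfaces there.
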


\begin{proof}
Assume that $p_{a},p_{a-1},\ldots,p_{b}$ are the $N$ effective intersection relations of $I$. Then we have
\begin{center}
$i_{a}<i_{a+1}<j_{a}\leq i_{a+2}<j_{a+1}\leq i_{a+3}<\cdots\leq i_{b}<j_{b-1}<j_{b}$.
\end{center}
Thus, for each simple module $S(i_{r})~(a\leq r\leq b)$, we obtain a minimal projective resolution as follows:
\begin{center}
$0\rightarrow P(j_{b})\rightarrow P(j_{b-1})\rightarrow\cdots\rightarrow P(j_{r+1})\rightarrow P(j_{r})\rightarrow P(i_{r+1})\rightarrow P(i_{r})\rightarrow S(i_{r})\rightarrow 0$.
\end{center}
It follows that $\pd S(i_{r})=b-r+2\leq N+1$. In particular, $\pd S(i_{a})=b-a+2=N+1$. Moreover, since $N$ is the maximal number of the effective intersection relations of $I$, it follows that the projective dimension of all simple modules $S(i_{t})$ is less than or equal to $N+1$, where $i_{t}$ are the starting points of the corresponding effective intersection relations.

On the other hand, it is easy to see that for any simple module $S(i)$ with $i\in \widetilde{Q}_{0}\backslash \{i_{r}|1\leq r\leq  n\}$, 
there is a minimal projective resolution:
\begin{center}
$0\rightarrow P(i+1)\rightarrow P(i)\rightarrow S(i)\rightarrow 0$.
\end{center}
This shows that $\pd S(i)=1$. Thus, we have $\mathrm{gldim}(K\widetilde{Q}/I)=N+1$.
\end{proof}

Now we give a classification of strictly shod algebras of type $\Lambda_{n}$. 

\begin{theorem} \label{thm:classification-of-strictly-shod-algebras}
Let $S$ be a 2-term silting complex over $\Lambda_n$ with the form $P(1)[1] \oplus T_1$. Suppose $T_{1}=P(2)\oplus T'\oplus T''$, where $T'$ is a tilting module over the path algebra of the Dynkin quiver of type $\mathbb{A}_{m}$, and $T''$ is a tilting module over the path algebra of the Dynkin quiver of type $\mathbb{A}_{n-m-2}$, $1\leq m\leq n-3$. Then $\End(S)$ is a strictly shod algebra. Furthermore, all strictly shod algebras of type $\Lambda_n$ arise via this construction.
\end{theorem}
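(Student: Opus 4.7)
The plan is twofold: first, to show that for $S = P(1)[1] \oplus P(2) \oplus T' \oplus T''$ of the stated form, $\gldim \End(S) = 3$, which combined with Buan--Zhou's bound $\gldim \End(P) \leq 3$ on endomorphism algebras of 2-term silting complexes (\cite[Theorem 1.1(a)]{BuanZhou18}) and Theorem~\ref{thm: silted-algebra-to-strictly-shod-algebra} forces $\End(S)$ to be strictly shod; second, to argue that no other 2-term silting complex over $\Lambda_n$ produces an algebra of global dimension $3$.

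For the forward direction I would explicitly describe the Gabriel quiver with relations of $\End(S)$. First, $T_1 := P(2) \oplus T' \oplus T''$ has composition factors supported on $\{2, 3, \ldots, n\}$, so it is a module over $A_{n-1} := \Lambda_n/\langle e_1 \rangle$ (the path algebra of the $\mathbb{A}_{n-1}$ subquiver $n \to (n{-}1) \to \cdots \to 3 \to 2$) and in fact constitutes a basic tilting $A_{n-1}$-module of the form (2) in Proposition~\ref{prop:tilting-modules-of-A}. Hence $\End_{\Lambda_n}(T_1) = \End_{A_{n-1}}(T_1)$ is a tilted algebra of type $\mathbb{A}_{n-1}$, whose quiver with relations is encoded by a rooted subquiver with relation of the genealogical tree (cf.~\cite{XieYangZhang25}). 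Adding the summand $P(1)[1]$ contributes one new vertex; its incoming arrows are governed by $\Hom_{K^b(\proj \Lambda_n)}(M, P(1)[1]) = \Ext^1_{\Lambda_n}(M, P(1)) \cong D\Hom_{\Lambda_n}(P(1), \tau M)$ for indecomposable summands $M$ of $T_1$, via Auslander--Reiten duality. Inspecting Figure~1 together with Remark~\ref{rem:AR-quiver-parts}, these contributing summands are precisely the indecomposable summands of $T''$ lying in the wing of $I(i{-}2)$, whose AR-translates reach $P(1)$.

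With this quiver with relations identified, I would apply Theorem~\ref{thm:the-global-dimension-of-algebra-by-intersect-relations}. The central claim is that the interplay between the pre-existing commutativity relations inside $\End(T'')$ and the new composition relations along paths terminating at $P(1)[1]$ produces precisely one effective intersection of length $2$ (and none longer), yielding $N = 2$ and thus $\gldim \End(S) = N + 1 = 3$. For the converse, by Proposition~\ref{prop:classification-of-silting} and the case analysis of Section~\ref{subsec:the-classification-of-silted-algebras-of-Lambda_{n}}, every 2-term silting complex falls into one of six families: cases (1) and (2) of Proposition~\ref{prop:classification-of-silting} give tilted algebras of type $\Lambda_n$; cases (Ia), (II), (IIIa), and (IIIb) decompose as products $\End(X) \times \End(Y)$ of tilted algebras of type $\mathbb{A}$ via zero-Hom arguments; and case (Ib) gives a tilted algebra of type $\mathbb{D}_n$ via a commutative relation. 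All of these have $\gldim \leq 2$ and are therefore ruled out.

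The main obstacle is the effective-intersection count in the forward direction: I must verify that, regardless of the choice of the tilting modules $T'$ and $T''$ in their respective wings, the quiver with relations of $\End(S)$ has maximal effective-intersection length exactly $2$. I plan to handle this by first working out small cases ($n = 5$ with $m = 1$, and $n = 6$ with $m = 1, 2$) to isolate the pattern, and then arguing in general that the relations pre-existing inside $\End(T'')$ involve only $T''$-vertices while the single new relation pairs the new $P(1)[1]$-vertex with exactly one $T''$-relation, producing one and only one intersecting pair of length $2$.
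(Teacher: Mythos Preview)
Your approach is essentially the same as the paper's: identify the Gabriel quiver with relations of $\End(S)$, apply Theorem~\ref{thm:the-global-dimension-of-algebra-by-intersect-relations} on effective intersections to get $\gldim = 3$, and for the converse invoke the case analysis from Proposition~\ref{prop:classification-of-silting} and Section~\ref{subsec:the-classification-of-silted-algebras-of-Lambda_{n}}.

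One imprecision worth correcting before you execute the plan: the pre-existing relation that intersects with the new one is \emph{not} a relation internal to $\End(T'')$. The paper pins it down directly: in $\End(T_1)$ the tilting decomposition $T_1 = P(2)\oplus T'\oplus T''$ of Proposition~\ref{prop:tilting-modules-of-A}(2) forces the zero-relation $\alpha\beta = 0$ on the path $P(n{+}1{-}m)\xrightarrow{\beta} P(2)\xrightarrow{\alpha}\tau^{-(m+1)}P(1)$, where the endpoints are the roots of the $T'$- and $T''$-wings respectively and the middle vertex is $P(2)$ itself. The new summand $P(1)[1]$ then contributes a single arrow $\tau^{-(m+1)}P(1)\xrightarrow{\delta}P(1)[1]$ with $\delta\alpha = 0$ (since $\Hom(\tau^{-(m+1)}P(1),P(1)[1])\neq 0$ while $\Hom(P(2),P(1)[1])=0$). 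Thus the two intersecting relations share the arrow $\alpha$ and sit on a linear $\mathbb{A}$-type subquiver through $P(2)$; the remaining parts of $\End(T')$ and $\End(T'')$, being tilted of type $\mathbb{A}$, contribute no further effective intersections. With this identification in hand your small-case exploration becomes unnecessary and the argument goes through uniformly in $m$.
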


\begin{proof}
By Proposition \ref{prop:tilting-modules-of-A}, considering the AR-quiver of $\mod A_{n-1}$, we obtain that $\End(T_{1})$ has a relation
\begin{center}
$\begin{xy}
 (-15,-15)*+{P(n+1-m)}="1",
(0,0)*+{P(2)}="2",
(15,-15)*+{\tau^{-(m+1)}P(m+3)}="3",
(8,-8)*+{ }="5",
(-8,-8)*+{ }="6",
\ar@/^0.6pc/@{.}"5";"6",
\ar^{\beta}"1";"2", \ar^{\alpha}"2";"3",
\end{xy}$
\end{center} with $\alpha\beta=0$. By Remark \ref{rem:AR-quiver-parts}, the indecomposable module $\tau^{-(m+1)}P(m+3)$ in the AR-quiver of $\mod A_{n-1}$ corresponds to either $\tau^{-(m+1)} P(1)$ or $\tau^{-(m+1)}P(2)$ within the AR-quiver of $\mod \Lambda_n$. Note that 
\begin{center}
$\Hom(\tau^{-(m+1)}P(1),P(1)[1])\neq 0$ and $\Hom(P(2),P(1)[1])=0$,
\end{center}
we conclude that $\End(P(1)[1] \oplus T_1)$ contains the following subquiver:
\begin{center}
$\begin{xy}
 (-15,-15)*+{P(n+1-m)}="1",
(0,0)*+{P(2)}="2",
(20,15)*+{\tau^{-(m+1)}P(1)}="3",
(40,0)*+{P(1)[1]}="4",
(15,10)*+{ }="5",
(-8,-8)*+{ }="6",
(30,8)*+{ }="7",
(10,8)*+{ }="8",
\ar@/^0.8pc/@{.}"5";"6",\ar@/^0.8pc/@{.}"7";"8",
\ar^{\beta}"1";"2", \ar^{\alpha}"2";"3", \ar^{\delta}"3";"4",
\end{xy}$
\end{center} with $\alpha\beta=0$ and $\delta\alpha=0$. This defines an algebra of type $\mathbb{A}_{k}$ and with one effective intersection relation consisting of two relations. Moreover, because $T'$ is a tilting module over the path algebra of the Dynkin quiver of type $\mathbb{A}_{m}$, and $T''$ is a tilting module over the path algebra of the Dynkin quiver of type $\mathbb{A}_{n-m-2}$, other components of the quiver of $\End(P(1)[1] \oplus T_1)$ lack effective intersection relations. Consequently, $\End(P(1)[1] \oplus T_1)$ contains only this single effective intersection relation. By Theorem \ref{thm:the-global-dimension-of-algebra-by-intersect-relations}, the global dimension of $\End(P(1)[1] \oplus T_1)$ is 3, that is, $\End(P(1)[1] \oplus T_1)$ is a strictly shod algebra of type $\Lambda_n$. Conversely, Proposition \ref{prop:classification-of-silting} implies that all strictly shod algebras of type $\Lambda_n$ arise through this construction.
\end{proof}

\begin{remark} \label{rem:morphism-composition}
\begin{itemize}
  \item [(1)] The morphism $\delta$ can be viewed as either a single morphism or the composition of morphisms. Moreover, Theorem \ref{thm:classification-of-strictly-shod-algebras} proves the claim of Case (Ic) in Section \ref{subsec:the-classification-of-silted-algebras-of-Lambda_{n}}.
  \item [(2)] Effective intersection can be used to compute the global dimension of a finitely-dimensional algebra and determine the projective dimension of injective modules. In \cite{ZhangLiuLiu25}, we utilize the concept of effective intersection to provide a criterion for the Gorensteiness of string algebras.
\end{itemize}

\end{remark}

At the end of this subsection, we show that every strictly shod algebra of type $\Lambda_{n}$ is a string algebra. 
Recall that tilted algebras of type $\mathbb{A}_{n}$ are gentle, however, it is straightforward to find tilted algebras of type $\Lambda_{n}$ that are not gentle. To begin, we recall the definitions of gentle algebras and string algebras. These two classes of algebras are particularly noteworthy in the representation theory of algebras, and we refer the reader to \cite{ButlerRingel87,AssemBrustleCharbonneauPlamondon10,BaurCoelho21,ArnesenLakingPauksztello16,BekkertMerklen03} for further details.

\begin{definition}
Let $\widetilde{Q}$ be a finite quiver and $I$ an admissible ideal of the path algebra $K\widetilde{Q}$. Then algebra $A=K\widetilde{Q}/I$ is called a {\it string algebra} provided the following conditions are satisfied:
\begin{itemize}
\item[(S1)] for any $i\in \widetilde{Q}_{0}$, there exists at most two arrows starting at $i$ and at most two arrows ending at $i$;

\item[(S2)] for any $a\in \widetilde{Q}_{1}$, there exists at most one arrow $b$ such that $ba\notin I$ and at most one arrow $c$ such that $ac\notin I$;

\item[(S3)] the ideal $I$ are generated by the paths of length greater than or equal to 2.
\end{itemize}
Moreover, algebra $A=K\widetilde{Q}/I$ is called a {\it gentle algebra} if additional the following conditions:
\begin{itemize}
\item[(S2')] for any $a\in \widetilde{Q}_{1}$, there exists at most one arrow $b^{\prime}$ such that $b^{\prime}a\in I$ and at most one arrow $c^{\prime}$ such that $ac^{\prime}\in I$;

\item[(S3')] the ideal $I$ are generated by the paths of length is equal to 2.
\end{itemize}
\end{definition}

\begin{theorem} \label{thm: strictly-shod-algebras-are-string-algebras}
Each strictly shod algebra of type $\Lambda_{n}$ is a string algebra.
\end{theorem}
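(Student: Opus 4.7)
The plan is to invoke the classification in Theorem~\ref{thm:classification-of-strictly-shod-algebras} so that I only need to inspect the endomorphism algebra $B=\End(P(1)[1]\oplus P(2)\oplus T'\oplus T'')$, where $T'$ and $T''$ are tilting modules over the path algebras of Dynkin quivers of types $\mathbb{A}_m$ and $\mathbb{A}_{n-m-2}$ respectively. By the classical theorem of Assem--Happel (tilted algebras of type $\mathbb{A}_n$ are gentle), the subalgebras $\End(T')$ and $\End(T'')$ are gentle, in particular string algebras, so conditions (S1), (S2), (S3) hold for the two ``wing'' components of the quiver of $B$. It remains to control what happens when these two pieces are glued together through $P(2)$, and when the extra vertex corresponding to $P(1)[1]$ is attached to the vertex of $\tau^{-(m+1)}P(1)$ inside $T'$.

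First I would describe explicitly the quiver with relations of $B$. From the analysis in the proof of Theorem~\ref{thm:classification-of-strictly-shod-algebras}, the only arrows beyond those already appearing in the quivers of $\End(T')$ and $\End(T'')$ are: two arrows out of the $P(2)$--vertex (one into $T'$, one into $T''$, namely the $\alpha$ and $\beta$ in the excerpt), and one arrow $\delta$ from the vertex of $\tau^{-(m+1)}P(1)$ to the vertex of $P(1)[1]$. Reading off the AR--quiver of $\mod\Lambda_n$ as drawn in Figure~1 and of $K^{[-1,0]}(\proj\Lambda_n)$ in Figure~2, no other irreducible morphisms exist between these four blocks, so in particular no further arrows in the quiver of $B$ appear. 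The only new relations are the two monomial zero relations $\alpha\beta=0$ and $\delta\alpha=0$ from the effective intersection, plus any relations already present inside $\End(T')$ or $\End(T'')$; all are monomial of length $\geq 2$, so (S3) is automatic.

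Next I would verify (S1) and (S2) vertex by vertex. For (S1), the only vertices whose local degree could change are the vertex of $P(2)$ (gaining two outgoing arrows $\alpha,\beta$ and no incoming arrows, since $P(2)$ is projective--injective for the piece it lies in), the vertex of $\tau^{-(m+1)}P(1)$ (gaining one outgoing arrow $\delta$), and the vertex of $P(1)[1]$ (a sink gaining one incoming arrow). Each ends up with at most two arrows in and at most two out, using that the corresponding vertices inside $\End(T')$, $\End(T'')$ already satisfy (S1) (they lie in gentle algebras of type $\mathbb{A}$). For (S2), I would check at each arrow $a$ that at most one continuation $ba\notin I$ and at most one $ac\notin I$ exists. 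The delicate cases are $a=\alpha$, $a=\beta$, and $a=\delta$; here the forced zero relations $\alpha\beta=0$ and $\delta\alpha=0$ kill the competing composition, so the uniqueness of non-zero continuations inherited from the gentle subalgebras $\End(T')$ and $\End(T'')$ is preserved. This also shows that the extra axiom (S2$'$) for gentleness need not hold (and typically will not), which is consistent with the fact that strictly shod algebras here are generally string but not gentle.

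The main obstacle I anticipate is the bookkeeping at the two attachment points, specifically showing that the presence of the effective intersection does not produce a third incoming or outgoing arrow at any vertex and that no path of length two different from $\alpha\beta$ or $\delta\alpha$ lies in the ideal. Both points reduce to a careful reading of the AR--quiver of $K^{b}(\proj\Lambda_n)$ around the summands $P(2)$, $\tau^{-(m+1)}P(1)$ and $P(1)[1]$, which is the same diagram used in the proof of Theorem~\ref{thm:classification-of-strictly-shod-algebras}, and to the description of $\End(T')$, $\End(T'')$ as gentle. Once these local checks are done, (S1), (S2), (S3) all hold and $B$ is a string algebra.
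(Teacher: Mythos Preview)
Your overall strategy matches the paper's exactly: invoke Theorem~\ref{thm:classification-of-strictly-shod-algebras}, use that $\End(T')$ and $\End(T'')$ are gentle, and check (S1)--(S3) at the gluing vertices. There are, however, two inaccuracies in your description of the quiver of $B$, and the second one is a genuine gap.

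First, a minor slip: $\beta$ is not an arrow out of the $P(2)$-vertex but \emph{into} it. In the subquiver exhibited in the proof of Theorem~\ref{thm:classification-of-strictly-shod-algebras} one has $P(n{+}1{-}m)\xrightarrow{\beta}P(2)\xrightarrow{\alpha}\tau^{-(m+1)}P(1)$, so $P(2)$ has exactly one incoming and one outgoing arrow. This affects which vertex needs the careful (S1)/(S2) analysis: it is the root $P(n{+}1{-}m)$ of $T'$ (the ``red'' vertex in the paper) that acquires an extra outgoing arrow $\beta$ on top of the one or two arrows it already has inside $\End(T')$, and one must check that the existing gentle relation there (of the form $\eta\gamma=0$) makes (S2) survive.

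The more serious issue is your treatment of $\delta$ as a single arrow from $\tau^{-(m+1)}P(1)$ to $P(1)[1]$. As noted in Remark~\ref{rem:morphism-composition}, $\delta$ is in general only a \emph{path} in the quiver of $B$: the irreducible map into $P(1)[1]$ may originate from a summand of $T''$ different from the root $\tau^{-(m+1)}P(1)$, and the relation $\delta\alpha=0$ may then have length~$>2$. The paper deals with this by splitting into two cases according to whether $\End(T'')$ is hereditary or not, and in each case draws the local quiver around the two ``blue'' vertices explicitly (with relations $\gamma\pi=0$, $\lambda\theta=0$, respectively $\mu\omega=0$), from which (S1), (S2), (S3) are read off directly. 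Your verification at $\tau^{-(m+1)}P(1)$ and $P(1)[1]$ does not cover these configurations; in particular the vertex of $T''$ that actually carries the arrow to $P(1)[1]$ gains an extra outgoing arrow, and you have not checked (S1)/(S2) there. Filling this in amounts precisely to the paper's case analysis.
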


\begin{proof}
From the proof of Theorem \ref{thm:classification-of-strictly-shod-algebras}, we obtain that the quiver of each strictly shod algebra of type $\Lambda_{n}$ contains the following subquiver:
\begin{center}
$\begin{xy}
 (-10,-10)*+{\textcolor[rgb]{1.00,0.00,0.00}{\bullet}}="1",
(0,0)*+{\bullet}="2",
(10,5)*+{\textcolor[rgb]{0.00,0.07,1.00}{\bullet}}="3",
(20,0)*+{\textcolor[rgb]{0.00,0.07,1.00}{\bullet}}="4",
(8,4)*+{ }="5",
(-8,-8)*+{ }="6",
(18,2)*+{ }="7",
(2,2)*+{ }="8",
\ar@/_0.6pc/@{.}"6";"5",\ar@/_0.6pc/@{.}"8";"7",
\ar^{\beta}"1";"2", \ar^{\alpha}"2";"3", \ar^{\delta}"3";"4",
\end{xy}$
\end{center} with $\alpha\beta=0$ and $\delta\alpha=0$. Furthermore, all other components of the quiver of a strictly shod algebra of type $\Lambda_{n}$ are subquivers of tilted algebras of either type  $A_{m}$ or type $A_{n-m-2}$. Thus, we only need to analyze the connecting vertices. By the AR-quiver of $\mod A_{m}$, there are at most three arrows connecting to the red vertex: two starting from it and one ending at it. Additionally, the following relation holds:
 \begin{center}
$\begin{xy}
 (-10,-10)*+{\bullet}="1",
(0,0)*+{\textcolor[rgb]{1.00,0.00,0.00}{\bullet}}="2",
(10,-10)*+{\bullet}="3",
(10,10)*+{\bullet}="4",
(8,-8)*+{ }="5",
(-8,-8)*+{ }="6",
\ar@/^0.6pc/@{.}"5";"6",
\ar^{\gamma}"1";"2", \ar^{\eta}"2";"3", \ar^{\beta}"2";"4",
\end{xy}$
\end{center} with $\eta\gamma=0$. By Remark \ref{rem:morphism-composition}, tilted algebras of type $A_{n-m-2}$ fall into two cases:
\begin{itemize}
  \item Case (1) (Non-hereditary tilted algebras): The silted algebras of type $\Lambda_n$ contain the subquiver:
  \begin{center}
$\begin{xy}
 (0,0)*+{\bullet}="1",
(10,0)*+{\textcolor[rgb]{0.00,0.07,1.00}{\bullet}}="2",
(-10,0)*+{\textcolor[rgb]{0.00,0.07,1.00}{\bullet}}="3",
(-20,0)*+{\bullet}="4",
(-10,-10)*+{\bullet}="5",
(0,-10)*+{\bullet}="6",
(8,0)*+{ }="7",
(0,-8)*+{ }="8",
(-2,0)*+{ }="9",
(-10,-8)*+{ }="10",
\ar@/^0.6pc/@{.}"7";"8",\ar@/^0.6pc/@{.}"9";"10",
\ar^{\gamma}"1";"2", \ar^{\alpha}"4";"3", \ar^{\pi}"6";"1",\ar^{\lambda}"3";"1",\ar^{\theta}"5";"3",
\end{xy}$
\end{center} with $\gamma\pi=0$, $\lambda\theta=0$ and $\gamma\lambda=\delta$. 
  \item Case (2) (Hereditary tilted algebras): The silted algebras of type $\Lambda_n$ contain the subquiver:
  \begin{center}
$\begin{xy}
 (10,0)*+{\textcolor[rgb]{0.00,0.07,1.00}{\bullet}}="1",
(0,0)*+{\bullet}="2",
(-10,0)*+{\bullet}="3",
(-20,0)*+{\textcolor[rgb]{0.00,0.07,1.00}{\bullet}}="4",
(0,-10)*+{\bullet}="5",
(8,0)*+{ }="7",
(0,-8)*+{ }="8",
\ar@/^0.6pc/@{.}"7";"8",
\ar^{\mu}"2";"1", \ar^{\nu}"3";"2",\ar^{\xi}"4";"3",\ar^{\omega}"5";"2",
\end{xy}$
\end{center} with $\mu\omega=0$ and $\mu\nu\xi=\delta$. 
\end{itemize}
In all these cases, strictly shod algebras are shown to be string algebras.
\end{proof}

%\begin{remark}
%In \cite{BaurCoelho21}, Baur and Coelho Sim\~{o}es studied the geometric model for the module category of a  gentle algebra. Recently, they further give the geometric model for the module category of a string algebra. In terms of the geometric model for the module category of a  gentle algebra, Liu, Gao and Huang found that the global dimension of a gentle algebra can be obtained by the number of the triangulation. Based on their results, the author and Liu find that the global dimension of a string algebra can be obtained by the number of the triangulation and the label. As an application, we show that the  global dimension of silted algebras of $\mathbb{D}_{n}$ which induced by the 2-term silting complexes $P(1)[1]\oplus T_{1}$ is 3. This also shows that all these silted algebras are strictly shod algebras.
%\end{remark}

\subsubsection{The cardinalities of $\cb_{i}$, $1\leq i\leq 7$} 
In this subsection, we count the cardinalities $b_i$ of $\cb_{i}$ for all $1\leq i\leq 7$. Moreover, we also provide a formula for counting the number of these strictly shod algebras. 

\begin{proposition} \label{prop: the-number-of-the-silted-algebras-induced-by-2-term-silting}
$a_s(\Lambda_n)=b_1+b_2+b_3+b_4+b_5+b_6+b_7$, where
\begin{itemize}
  \item [(1)] $b_1=a_{t}(\Lambda_n)$, which is given in Proposition \ref{prop:tilted-algebras-of-type-D};
  \item [(2)] $b_2+b_4+b_7=
\begin{cases}
t(A_{3}) & \text{if } n=4,\\
t(A_{n-1})-1& \text{if } n>4.
\end{cases}$
  \item [(3)] $b_3=\sum\limits_{i=4}^{n-1}a_{t}(\Lambda_{i})\times a_{t}(A_{n-i})$;
  \item [(4)] $b_5=a_{t}(A_{n-2})$.
  \item [(5)] $b_6=\begin{cases}
3\times a_{t}(A_{n-3}) & \text{if } n\neq6,\\
3\times a_{t}(A_{n-3})-3& \text{if } n=6.
\end{cases}$
\end{itemize}
%\[a_{s}^{2}(\Lambda_{n})=\left\{\begin{array}{ll}
%M&\mathrm{if}\ n\neq 6,\\
%M-3&\mathrm{if}\ n=6,
%\end{array}\right.\]
%where $M=t(A_{n-1})-1++3\times a_{t}(A_{n-3})+$.
\end{proposition}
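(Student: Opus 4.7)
The plan is to compute each cardinality $b_i$ separately by combining the disjoint decomposition of Theorem~\ref{thm:classification-of-silted-algebras-of-D}, the set-theoretic identifications from the case analysis in Section~\ref{subsec:the-classification-of-silted-algebras-of-Lambda_{n}}, and the symmetric-product counting of Lemma~\ref{lem:symmetric-product}. Part~(1) is immediate from $\cb_1=\ca_t(\Lambda_n)$ together with the closed form supplied by Proposition~\ref{prop:tilted-algebras-of-type-D}. Parts~(3) and~(4) reduce to direct applications of Lemma~\ref{lem:symmetric-product}(b): in~(3), $\ca_t(\Lambda_m)$ and $\ca_t(A_{n-m})$ are disjoint by Dynkin type, so each summand in the disjoint union collapses to a Cartesian product and summation over $m$ yields the stated formula; in~(4), $a_t(A_1)=1$ and $\ca_t(A_{n-2})\cap\ca_t(A_1)=\emptyset$ for $n\ge 4$, so the iterated symmetric product collapses to $a_t(A_{n-2})$.

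For part~(2), the identification $\cb_2\sqcup\cb_4\sqcup\cb_7=\ca_s^{{\rm I}}(\Lambda_n)$ established in Section~\ref{subsec:the-classification-of-silted-algebras-of-Lambda_{n}} shows that this sum counts silted algebras $\End(P(1)[1]\oplus T)$ with $T$ a basic tilting module in the wing of $P(2)$, equivalent to $\mod A_{n-1}$. By Proposition~\ref{prop:classification-of-silting} together with the convention from Section~\ref{s:classification-of-2-term-silting} that 2-term silting complexes with isomorphic endomorphism algebras are identified, there are $t(A_{n-1})$ isomorphism classes of such silting complexes, giving $t(A_3)=5$ for $n=4$ directly. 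For $n>4$, exactly one of these endomorphism algebras already lies in $\cb_1$ as a tilted algebra of type $\Lambda_n$; I would locate this overlap by comparing the Gabriel quiver of $\End(P(1)[1]\oplus P(2)\oplus T')$ (for an appropriate $T'$) with that of $\End(\tilde T)$ for a specific tilting $\tilde T$ from Proposition~\ref{prop:the-forms-of-tilting-modules}, and confirm uniqueness of the coincidence, yielding the $-1$.

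For part~(5), $\cb_6=\ca_t(A_{n-3})\times_s\ca_t(A_3)$. When $n\ne 6$ the two sets are disjoint by rank, so Lemma~\ref{lem:symmetric-product}(b) gives a naive count $a_t(A_{n-3})\cdot a_t(A_3)$; the reduction to $3\cdot a_t(A_{n-3})$ reflects that only $3$ of the $a_t(A_3)=4$ isomorphism classes in $\ca_t(A_3)$ actually arise as $\End(T_1)$ for some 2-term tilting complex $T_1$ over the V-shape orientation $1\leftarrow 3\rightarrow 2$, which removes $a_t(A_{n-3})$ pairings. When $n=6$, Lemma~\ref{lem:symmetric-product}(c) yields $\tfrac{a_t(A_3)(a_t(A_3)+1)}{2}=10$, and an analogous exclusion of one self-pairing produces $9=3\cdot a_t(A_3)-3$. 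The main obstacle will be the precise identification of which tilted algebras of type $A_3$ are realized by 2-term tilting over that specific orientation, together with the analogous overlap calculation in part~(2); both require a careful comparison of Gabriel quivers with relations, guided by the AR-quiver analysis of Section~\ref{s:classification-of-2-term-silting} and Remark~\ref{rem:AR-quiver-parts}.
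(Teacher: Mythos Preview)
Your overall plan is sound, but the explanation for the $-1$ in part~(2) misidentifies the source of the correction. You claim that for $n>4$ exactly one of the endomorphism algebras $\End(P(1)[1]\oplus T)$ already lies in $\cb_1=\ca_t(\Lambda_n)$, and that this overlap accounts for the subtraction. This cannot be right: the three constituents $\ca_s^{\rm Ia}$, $\ca_s^{\rm Ib}$, $\ca_s^{\rm Ic}$ consist respectively of disconnected algebras, connected tilted algebras of type $\mathbb{D}_n$ \emph{not} of type $\Lambda_n$, and strictly shod algebras of global dimension~$3$; none of these can coincide with a tilted algebra of type $\Lambda_n$, so $\ca_s^{\rm I}\cap\cb_1=\emptyset$. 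The paper's argument is instead that the map $T\mapsto\End(P(1)[1]\oplus T)$ from the $t(A_{n-1})$ tilting modules in the wing of $P(2)$ to $\ca_s^{\rm I}$ fails to be injective: the two tilting modules
\[
P(2)\oplus P(3)\oplus\cdots\oplus P(n-1)\oplus\tau^{-1}P(n)
\quad\text{and}\quad
P(2)\oplus P(3)\oplus\bigoplus_{m=4}^{n}\tau^{3-m}P(m)
\]
are distinct for $n>4$ but have isomorphic endomorphism algebras (this is \cite[Lemma~4.8(b)]{XieYangZhang25}), and since $P(1)[1]$ is orthogonal to both, the resulting silted algebras coincide. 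This is an \emph{internal} coincidence within case~(Ia), not an overlap with $\cb_1$.

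Your treatment of part~(5) also diverges from the paper. You read the factor~$3$ as ``only $3$ of the $a_t(A_3)=4$ linear tilted algebras arise'', but the paper obtains it more directly: the objects $I(1),I(2),P(1)[1],P(2)[1],P(3)[1]$ form (up to shift) the module category of $k(1\leftarrow 3\to 2)$, and this orientation has exactly $3$ tilted algebras, so $\End(T_1)$ takes $3$ values. For $n=6$ the paper does not invoke Lemma~\ref{lem:symmetric-product}(c); rather it observes that the three tilted algebras arising from $T_2$ over the linear subquiver $4\leftarrow 5\leftarrow 6$ are isomorphic to the three from the $T_1$ side, producing $3$ coincidences in the unordered-pair count and hence the correction $-3$. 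Your ``analogous exclusion of one self-pairing'' does not capture this mechanism.
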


\begin{proof} According to the classification of the silted algebras in Section \ref{subsec:the-classification-of-silted-algebras-of-Lambda_{n}}, there are the following three cases:
\begin{itemize}
  \item [(1)] $S=P(1)[1]\oplus T$, where $T$ is a basic tilting module of the wing of $P(2)$. In this case, each tilting module $T$ gives rise to a silted algebra. However, by \cite[Lemmma 4.8(b)]{XieYangZhang25}, 
\begin{center}
$\End(P(2)\oplus P(3)\oplus\cdots\oplus P(n-1)\oplus\tau^{-1}P(n))\cong \End(P(2)\oplus P(3)\oplus \bigoplus\limits_{m=4}^{n}\tau^{3-m}P(m))$.
\end{center}
 If $n=4$, then
\begin{align*}
 P(2)\oplus P(3)\oplus\cdots\oplus P(n-1)\oplus\tau^{-1}P(n)&=P(2)\oplus P(3)\oplus\tau^{-1}P(4)\\
 &=P(2)\oplus P(3)\oplus \bigoplus\limits_{m=4}^{n}\tau^{3-m}P(m);
\end{align*}
  If $n>4$, then $P(2)\oplus P(3)\oplus\cdots\oplus P(n-1)\oplus\tau^{-1}P(n)$ and $P(2)\oplus P(3)\oplus \bigoplus\limits_{m=4}^{n}\tau^{3-m}P(m)$ are two different tilting modules. 
This implies that \\
$b_2+b_4+b_7=
\begin{cases}
t(A_{3}) & \text{if } n=4,\\
t(A_{n-1})-1& \text{if } n>4.
\end{cases}$
  \item [(2)] $S=P(1)[1]\oplus P(2)[1]\oplus T$, where $T$ is a basic tilting module of the wing of $P(3)$. In this case, $\Hom(T,P(1)[1])=0$ and $\Hom(T,P(2)[1])=0$, this shows that each tilted algebra of type $A_{n{-}2}$ gives rise to a silted algebra of type $\Lambda_n$. Thus, $b_5=a_{t}(A_{n-2})$.
  \item [(3)] $S=T_1\oplus T_2$. 
\begin{itemize}
  \item If $T_{1}$ is viewed as a basic 2-term tilting complex over the path algebra of the subquiver $\begin{xy}
(-10,0)*+{1}="2",
(0,0)*+{3}="3",
(10,0)*+{2}="4",
\ar"3";"2", \ar"3";"4",
\end{xy}$ and $T_{2}$ is a basic tilting module in the wing of $P(4)$, then the additive closure of indecomposable modules $I(1)$, $I(2)$, $P(1)[1]$, $P(2)[1]$ and $P(3)[1]$ can be viewed as the shift of the AR-quiver for the path algebra of the subquiver $\begin{xy}
(-10,0)*+{1}="2",
(0,0)*+{3}="3",
(10,0)*+{2}="4",
\ar"3";"2", \ar"3";"4".
\end{xy}$ within the AR-quiver of $\mod \Lambda_n$. Hence, the number of silted algebras of type $\Lambda_n$ is $3\times a_{t}(A_{n-3})$. Specifically, when $n=6$, the subquiver
$\begin{xy}
(-10,0)*+{4}="2",
(0,0)*+{5}="3",
(10,0)*+{6}="4",
\ar"3";"2", \ar"4";"3",
\end{xy}$ induces three tilted algebras that are isomorphic to the tilted algebras arising from the subquiver
$\begin{xy}
(-10,0)*+{1}="2",
(0,0)*+{3}="3",
(10,0)*+{2}="4",
\ar"3";"2", \ar"3";"4",
\end{xy}$. See \cite[Example 3.6]{Xing21}. As a result, we obtain $b_6$.
  \item If $T_1$ is viewed as a basic 2-term tilting complex over $\Lambda_{i}$ and $T_2$ is a basic tilting module in the wing of $P(i+1)$ for some $i\geq 4$, then by the AR-quiver of $K^{[-1,0]}(\proj \Lambda_{n})$, it is easy to see that the number of silted algebras of type $\Lambda_n$ is $\sum\limits_{i=4}^{n-1}a_{t}(\Lambda_{i})\times a_{t}(A_{n-i})$. So, we obtain $b_3$.
\end{itemize}
 
\end{itemize}
This completes the proof.
\end{proof}
%\begin{remark}
%Note that the tilted algebras of type $\mathbb{D}_{n}$ and $\mathbb{A}_{n}$ are not the same.
%\end{remark}
%Now by Theorem \ref{thm:the-number-of-tilted-algebras} and Proposition \ref{prop: the-number-of-the-silted-algebras}, we obtain the following formula for calculating the number of silted algebras of $D_{n}$.

\begin{example}\label{exm:silted-algebras-of-5-and-6}
  \begin{itemize}
    \item [(1)] If $n=4$, by Example \ref{exm:tilted-algebras-of-5-and-6}, $b_1=7$, $b_5=1$, $b_3=0$, $b_2+b_4+b_7=5$ and $b_6=3$. However, there are two isomorphic elements between $\cb_6$ and $\cb_4$, and there is one isomorphic element between $\cb_2$ and $\cb_1$. Thus, $a_{s}(\Lambda_{4})=13$. See \cite[Section 3.3.1]{Xing21}.
    \item [(2)] If $n=5$, then by Proposition $\ref{prop: the-number-of-the-silted-algebras-induced-by-2-term-silting}$ and Example \ref{exm:tilted-algebras-of-5-and-6}, $a_{s}(\Lambda_{5})=35+13+7+4+3=62$. See \cite[Section 3.3.2]{Xing21}.
    \item [(3)] If $n=6$, then by Proposition $\ref{prop: the-number-of-the-silted-algebras-induced-by-2-term-silting}$ and Example \ref{exm:tilted-algebras-of-5-and-6}, $a_{s}(\Lambda_{6})=126+41+7+35+10+12-3=228$.
  \end{itemize}
\end{example}

By Theorem \ref{thm:classification-of-strictly-shod-algebras}, we can compute $b_7=a_{ss}(\Lambda_{n})$: the number of strictly shod algebras of type $\Lambda_{n}$.

\begin{proposition} \label{prop:the-number-of-strictly-shod-algebras}
\begin{align*}
a_{ss}(\Lambda_{n}) &=a_{nht}(A_{n-1})-2a_{nht}(A_{n-2})  \\
& =\frac{1}{n}(\begin{smallmatrix}{2n-2}\\n-1\end{smallmatrix})-\frac{2}{n-1}(\begin{smallmatrix}{2n-4}\\n-2\end{smallmatrix}).
\end{align*}
\end{proposition}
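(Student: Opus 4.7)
The plan is to combine the structural classification of strictly shod algebras with the Catalan recurrence for $t(A_k)$. By Theorem~\ref{thm:classification-of-strictly-shod-algebras}, every strictly shod algebra of type $\Lambda_n$ is of the form $\End\bigl(P(1)[1]\oplus P(2)\oplus T'\oplus T''\bigr)$ for some $m\in\{1,\ldots,n-3\}$, a basic tilting module $T'$ over $A_m$, and a basic tilting module $T''$ over $A_{n-m-2}$. Thus the a priori upper bound is
\[
a_{ss}(\Lambda_n)\;\leq\;\sum_{m=1}^{n-3} t(A_m)\cdot t(A_{n-m-2}).
\]

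Next I would argue that this bound is attained, i.e.\ that distinct triples $(m,T',T'')$ produce pairwise non-isomorphic strictly shod algebras. The vertex $P(1)[1]$ is a distinguished sink of the quiver of $\End(S)$ sitting at one end of the unique effective intersection of two zero relations described in the proof of Theorem~\ref{thm:classification-of-strictly-shod-algebras}; this breaks the naive left--right symmetry of the backbone and lets one recover $m$ as the size of the wing adjacent to $P(1)[1]$. Within each wing the attaching vertex is distinguished (carrying $\tau^{-(m+1)}P(1)$ on the $T'$-side and $P(n+1-m)$ on the $T''$-side), so identifying the wings as \emph{rooted} quivers with relation and invoking the bijection $\varepsilon'\colon\ct(A_k)\to\cq(k)$ recalled before Corollary~\ref{cor:A_n-bijective} recovers $T'$ and $T''$ uniquely from the algebra. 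This upgrades the inequality above to an equality.

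The remainder is purely combinatorial. With the convention $t(A_0)=1$ from Corollary~\ref{cor:number-of-tilting-contains-P(n)}, the Catalan recurrence for $C_k=t(A_k)$ gives $t(A_{n-1})=\sum_{k=0}^{n-2}t(A_k)\,t(A_{n-2-k})$; peeling off the two boundary summands $k=0$ and $k=n-2$, each equal to $t(A_{n-2})$, yields
\[
a_{ss}(\Lambda_n)=\sum_{m=1}^{n-3}t(A_m)\,t(A_{n-m-2})=t(A_{n-1})-2\,t(A_{n-2}).
\]
Now $a_{nht}(A_k)=t(A_k)-2^{k-1}$ by (4.1), so the $2^{k-1}$ terms cancel in $a_{nht}(A_{n-1})-2\,a_{nht}(A_{n-2}) = t(A_{n-1})-2^{n-2}-2\,t(A_{n-2})+2^{n-2}=t(A_{n-1})-2\,t(A_{n-2})$, giving the first equality of the proposition. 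Substituting the explicit Catalan formula $t(A_k)=\frac{1}{k+1}\binom{2k}{k}$ then yields the closed form $\frac{1}{n}\binom{2n-2}{n-1}-\frac{2}{n-1}\binom{2n-4}{n-2}$.

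The principal obstacle is the injectivity of the parameter-to-algebra map in step two: one must rule out any accidental isomorphism that identifies two distinct triples $(m,T',T'')$, in particular any symmetry swapping the two wings. This is exactly where the $P(1)[1]$-sink provides the needed asymmetry and where the rooted-quiver bijection $\varepsilon'$ from \cite{XieYangZhang25} is essential; without these one would overcount.
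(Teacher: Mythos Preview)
Your proposal is correct and lands on the same count as the paper, but the route to the first displayed equality differs slightly. The paper argues that every $T_1$ of the form in Theorem~\ref{thm:classification-of-strictly-shod-algebras} has $\End(T_1)$ non-hereditary, so by Corollary~\ref{cor:A_n-bijective} distinct such $T_1$ give distinct algebras; it then partitions $\ca_{nht}(A_{n-1})$ into the form-(1) and form-(2) pieces of Proposition~\ref{prop:tilting-modules-of-A}, observes that the form-(1) contribution is $2a_{nht}(A_{n-2})$, and subtracts. You instead count the form-(2) tilting modules directly as $\sum_{m=1}^{n-3}t(A_m)\,t(A_{n-m-2})$, collapse this via the Catalan recurrence to $t(A_{n-1})-2t(A_{n-2})$, and only then translate into the $a_{nht}$ expression using~(4.1). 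Both are valid; the paper's route reaches the $a_{nht}$ formula in one step without invoking the Catalan recurrence, while yours makes the combinatorics explicit and gives an independent check that the two expressions agree. Your injectivity argument (locating $P(1)[1]$ as the distinguished sink and reading off the two rooted wings) is a more hands-on version of exactly what Corollary~\ref{cor:A_n-bijective} encodes, so the two arguments are really the same at that step.
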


\begin{proof}
By the proof of Theorem \ref{thm:classification-of-strictly-shod-algebras}, $\End(T_1)$ is a non-hereditary tilted algebra of type $A_{n-1}$.
Thus, Corollary \ref{cor:A_n-bijective} and Proposition \ref{prop:tilting-modules-of-A} implies that $a_{ss}(\Lambda_{n}) =a_{nht}(A_{n-1})-2a_{nht}(A_{n-2})$. This proposition then follows from $(4.1)$.
\end{proof}

The following table contains the first values of $t(A_n)$, $a_{nht}(A_n)$ and $a_{ss}(\Lambda_{n})$.

$$\begin{tabular}{|c||c|c|c|c|c|c|c|c|c| p{<20>}}
\hline
$n$ & $1$ & $2$ & $3$ & $4$ & $5$ & $6$ & $7$ & $8$ & $9$\\
\hline
$t(A_n)$ & 1 & 2 & 5 & 14 & 42 & 132 & 429 & 1430 & 4862 \\
\hline
$a_{nht}(A_n)$ & 0 & 0 & 1 & 6 & 26 & 100 & 365 & 1302 & 4606 \\
\hline
$a_{ss}(\Lambda_{n})$ &$\setminus$& $\setminus$ & $\setminus$ & 1 & 4 & 14 & 48 & 165 & 572 \\
\hline
\end{tabular} 
$$

\section{Silted algebras of type $\Gamma_{n}$}
\label{s:silted-algebra-mutated-orientation}
In this section, we classify up to isomorphism the basic silted algebras and strictly shod algebras of type $\Gamma_{n}$ ($n\geq 4$), the path algebra of the quiver
$$Q'=\begin{xy}
(-10,5)*+{1}="0",
(-10,-5)*+{2}="1",
(0,0)*+{3}="2",
(12,0)*+{\cdots}="3",
(28,0)*+{n-1}="4",
(42,0)*+{n}="5",
\ar"2";"0",\ar"2";"1", \ar"3";"2", \ar"4";"3", \ar"4";"5",
\end{xy}.$$
Then we give some recurrence formulas to compute the number of these silted algebras and strictly shod algebras. 

Since the classification of silted algebras of type $\Gamma_n$ is closely related to the classification of silted algebras of type $A_n$, the path algebra of the quiver:
$$\begin{xy}
(-12,0)*+{1}="1",
(0,0)*+{2}="2",
(12,0)*+{\cdots}="3",
(28,0)*+{n-1}="4",
(42,0)*+{n}="5",
\ar"2";"1", \ar"3";"2",\ar"4";"3",\ar"5";"4",
\end{xy}$$ and silted algebras of type $B_n$, the path algebra of the quiver 
$$\begin{xy}
(-12,0)*+{1}="1",
(0,0)*+{2}="2",
(12,0)*+{\cdots}="3",
(28,0)*+{n-1}="4",
(42,0)*+{n}="5",
\ar"2";"1", \ar"3";"2",\ar"4";"3",\ar"4";"5",
\end{xy}.$$
So, we recall some results in \cite[Section 5.3]{XieYangZhang25}. 
Let $T\in K^{[-1,0]}(\proj A_n)$ be a basic presilting complex that does not contain $P(n)$ as a direct summand. Define
$M=T\oplus X$ ($X=\tau^{-1}P(n)[1]$) and $M^*=T\oplus P(n)$, where $(-)^*$ is treated as an operator. Then, 
$M^*$ is a basic 2-term silting complex over $A_{n}$ if and only if $M$ is a basic 2-term silting complex over $B_n$.

In order to classify silted algebras of type $\Gamma_n$, we need the following result. See \cite[Lemma 4.16]{XieYangZhang25}.

\begin{lemma}\label{lem:T-to-X}
Let $T$ be a tilting module over $A_n$ that contains $P(n)$ as a direct summand. Then 
\begin{itemize}
  \item [(1)] If $T$ contains $P(n-1)$ as a direct summand, then $\Hom(T/P(n),X)=0$;
  \item [(2)] If $T$ does not contain $P(n-1)$ as a direct summand, then $\Hom(T/P(n),X)\neq0$;
\end{itemize}
\end{lemma}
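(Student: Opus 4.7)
The plan is to translate the statement into an $\Ext^{1}$ computation via Auslander--Reiten theory, to pin down the indecomposables that could obstruct vanishing, and then to conclude by induction on $n$ using the classification of tilting modules in Proposition~\ref{prop:tilting-modules-of-A}. Since $A_n$ is hereditary, for every $A_n$-module $Y$ we have $\Hom(Y,\tau^{-1}P(n)[1])=\Ext^{1}(Y,\tau^{-1}P(n))$, and a direct AR-quiver computation identifies $\tau^{-1}P(n)$ with the simple module $S(n-1)$. The lemma is therefore equivalent to the assertion that $\Ext^{1}(T/P(n),S(n-1))=0$ if and only if $P(n-1)$ is a direct summand of $T$. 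Using the AR formula $\Ext^{1}(M,S(n-1))\cong D\Hom(S(n-1),\tau M)$ together with the explicit action of $\tau$ and the hom formula for interval modules over $A_n$, one checks that the only indecomposables $M$ with $\Ext^{1}(M,S(n-1))\neq 0$ are precisely those whose support is an interval ending at vertex $n-2$.

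The easy direction ($\Leftarrow$) follows from the parallel AR computation, which gives $\Ext^{1}(M,P(n-1))\neq 0$ for every such candidate $M$: if $P(n-1)$ is a summand of the tilting module $T$, the rigidity of $T$ excludes every such $M$ from being a summand, and hence $\Ext^{1}(T/P(n),S(n-1))=0$. For the converse ($\Rightarrow$) I would argue by induction on $n$. The base case $n=2$ is immediate since $P(n-1)=P(1)$ is projective-injective and hence a summand of every basic tilting module. For the inductive step, Proposition~\ref{prop:tilting-modules-of-A} writes $T=P(1)\oplus T'$ or $T=P(1)\oplus T'\oplus T''$, with $T'$ a tilting module in the wing of $P(i)$ for some $i$ and, when present, $T''$ a tilting module in the disjoint wing of $I(i-2)$. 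Because $P(n)=S(n)$ lies in the wing of $P(i)$ for each admissible $i$, the summand $P(n)$ belongs to $T'$, and the induction hypothesis can be applied inside this wing, which is itself of type $A_{n-i+1}$.

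The contribution of $P(1)$ to $\Ext^{1}(T/P(n),S(n-1))$ vanishes since $P(1)$ is projective, and the contribution of $T''$ vanishes whenever $i\leq n-1$, because the summands of $T''$ are supported on $\{1,\dots,i-2\}$ and so cannot have support ending at $n-2$. In these configurations the whole $\Ext^{1}$ reduces to the analogous computation inside the wing of $P(i)$, and the induction hypothesis supplies the equivalence with the presence of $P(n-1)$ in $T'$. The delicate case is the edge $i=n$, in which the wing of $P(n)$ degenerates to $\{S(n)\}$, $T''$ becomes a full tilting module of the wing of $I(n-2)$, and $T''$ necessarily contains its own projective-injective $I(n-2)$; this summand contributes a nonzero $\Ext^{1}$ to $S(n-1)$, while simultaneously $P(n-1)$ cannot be a summand of $T$ (it is neither $P(1)$ nor $S(n)$, and it is not supported on $\{1,\dots,n-2\}$), so the claimed equivalence still holds. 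The main obstacle is precisely this two-wing bookkeeping, especially the edge case $i=n$, where the nonvanishing of $\Ext^{1}(T'',S(n-1))$ and the absence of $P(n-1)$ from $T$ must be reconciled with the statement.
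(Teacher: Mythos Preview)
Your argument is correct. The translation $\Hom(-,X)\cong\Ext^{1}(-,S(n-1))$ via the hereditary property and the identification $\tau^{-1}P(n)=S(n-1)$ are both right, and the key computation---that the indecomposables $M$ with $\Ext^{1}(M,S(n-1))\neq 0$ are exactly the interval modules with support ending at $n-2$---holds under the paper's conventions. Your direction (1) is clean: each such $M$ also satisfies $\Ext^{1}(M,P(n-1))\neq 0$, so rigidity of $T$ excludes them. Your inductive treatment of direction (2) via Proposition~\ref{prop:tilting-modules-of-A} is sound, including the edge case $i=n$: there $T''$ contains the projective--injective $I(n-2)$ of its wing, which has support ending at $n-2$ and hence contributes a nonzero $\Ext^{1}$, while $P(n-1)$ visibly cannot occur as a summand of $P(1)\oplus P(n)\oplus T''$.

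As for comparison with the paper: there is nothing to compare. The paper does not prove this lemma; it is quoted verbatim from \cite[Lemma~4.16]{XieYangZhang25} with only a cross-reference. Your write-up therefore supplies a complete, self-contained proof where the paper offers none. One minor point worth tightening in a final version: when you invoke the induction hypothesis inside the wing of $P(i)$, you are implicitly using that $\Ext^{1}$ computed in $\mod A_n$ agrees with $\Ext^{1}$ computed in the wing for the modules in question. This is true here (the relevant projectives of the wing are also projective in $A_n$), but it is worth a one-line remark.
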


Suppose $M$ is a basic 2-term silting complex over $B_n$, with $X$ as a direct summand of $M$, and that $M^*$ is a tilting module over $A_n$. Then, by Lemma \ref{lem:T-to-X}, $\End(M)$ falls into three classes:
\begin{itemize}
  \item [(1)] Tilted of type $A_{n-1}\times A_1$;
  \item [(2)] Tilted of type $A_n$;
  \item [(3)] Tilted of type $\mathbb{A}_n$ but not of type $A_n$.
\end{itemize}
Denote by $\ca_s^{\mu}(A_{n})$ the set of isoclasses of such silted algebras of type $A_n$ and by $\ca_s^{\mu}(\mathbb{A}_{n})$ the set of isoclasses of such silted algebras of type $\mathbb{A}_n$ but not of type $A_n$. In particular, if $M^*$ is a tilting module over $A_n$ that contains $P(2)$ as a direct summand, we denote by $\ca_s^{\mu,2}(\mathbb{A}_{n})$ the set of isoclasses of $\End(M)$ for such $M$'s.
 
\subsection{A classification of the silted algebras of type $\Gamma_{n}$}

Put
\begin{align*}
\ca_t^{\mu}(\mathbb{D}_n)&:=\{\text{$\End(S) \mid \End(S)$ is a basic tilted algebra of type $\mathbb{D}_n$, and $X$ is a direct summand of $S$}\}/\cong~,\\
\ca_s(\Gamma_n)&:=\{\text{basic silted algebras of type $\Gamma_n$}\}/\cong~,\\
\ca_{ss}(\Gamma_n)&:=\{\text{basic strictly shod algebras of type $\Gamma_n$}\}/\cong~.
\end{align*}
Let $a_s(\Gamma_n)$ and $a_{ss}(\Gamma_n)$ denote the cardinalities of $\ca_s(\Gamma_n)$ and $\ca_{ss}(\Gamma_n)$, respectively. Then we have a classification of the silted algebras of type $\Gamma_n$ as follows:

\begin{theorem}\label{thm:classification-of-silted-algebras-of-mutation-type-D}
$\ca_s(\Gamma_n)=\cc_1\sqcup\cc_2\sqcup\cc_3\sqcup\cc_4\sqcup\cc_5\sqcup\cc_6\sqcup\cc_7\sqcup\cc_8\sqcup\cc_9\sqcup\cc_{10}\sqcup\cc_{11}\sqcup\cc_{12}\sqcup\cc_{13}\sqcup\cc_{14}$, where
\begin{itemize}
  \item [(1)] $\cc_1=\ca_t(\Lambda_n)$;
  \item [(2)] $\cc_2=\ca_{t}(\mathbb{D}_n)\cup \ca_t^{\mu}(\mathbb{D}_n)$;
  \item [(3)] $\cc_3=\bigcup\limits_{m=2}^{n-4}(\ca_t^4(A_m) \times_s \ca_t(\Lambda_{n-m}))\cup (\ca_t^1(\Lambda_{n-1})\times_s \ca_t(A_1))$;
  \item [(4)] $\cc_4=\bigcup\limits_{m=2}^{n-4}(\ca_t(\mathbb{A}_m) \times_s \ca_t(\Lambda_{n-m}))$;
  \item [(5)] $\cc_5=((\ca_t^3(A_{n-1})\cup \ca_s^{\mu}(A_{n-1}))\cap \ca_t^4(A_{n-1}))\times_s \ca_t(A_{1})$;
  \item [(6)] $\cc_6=\ca_s^{\mu,2}(\mathbb{A}_{n-1})\times_s \ca_t(A_{1})$;
  \item [(7)] $\cc_7=\ca_t^4(A_{n-3})\times_s \ca_t(A_{3})$;
  \item [(8)] $\cc_8=\ca_s^{\mu}(\mathbb{A}_{n-3})\times_s \ca_t(B_{3})$;
  \item [(9)] $\cc_9=((\ca_t^3(A_{n-2})\cap \ca_t^1(A_{n-2}))\cup\ca_t^2(A_{n-2}))\times_s \ca_t(A_{1})\times_s \ca_t(A_{1})$;
  \item [(10)] $\cc_{10}=\ca_s^{\mu}(\mathbb{A}_{n-2})\times_s \ca_t(A_{1})\times_s \ca_t(A_{1})$;
  \item [(11)] $\cc_{11}=\ca_t^{1}(A_{n-3})\times_s \ca_t(A_{1})\times_s \ca_t(A_{1})\times_s \ca_t(A_{1})$;
  \item [(12)] $\cc_{12}=\ca_t^{1}(A_{n-4})\times_s \ca_t(A_{1})\times_s \ca_t(B_{3})$;
  \item [(13)] $\cc_{13}=\bigcup\limits_{i=4}^{n-1}\ca_t^{1}(A_{n-i-1})\times_s \ca_t(A_{1})\times_s \ca_t(\Lambda_{i})$;
  \item [(14)] $\cc_{14}=\ca_{ss}(\Gamma_n)$. 
\end{itemize}
\end{theorem}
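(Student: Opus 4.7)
My plan is to mirror the strategy used for Theorem~\ref{thm:classification-of-silted-algebras-of-D}, adapted to the fact that in $Q'$ the vertex $n$ is a sink rather than a source. First I will establish a classification of basic 2-term silting complexes over $\Gamma_n$ analogous to Proposition~\ref{prop:classification-of-silting}. Using Xing's algorithm and arguments parallel to Lemmas~\ref{A-D} and~\ref{classification}, any basic 2-term silting complex $S$ over $\Gamma_n$ decomposes, according to the idempotent supporting its shifted projective direct summands, into one of the following shapes: (i) a tilting module over $\Gamma_n$; (ii) $P(n)[1]\oplus T$ with $T$ a tilting module supported on the subquiver obtained by deleting $n$; (iii) $P(i)[1]\oplus T'$ for some $1\leq i\leq n-1$, where $T'$ decomposes along the wing of $P(i+1)$; or (iv) $T_1\oplus T_2$ with $T_1$ a 2-term silting complex over a proper subquiver and $T_2$ a tilting module in the wing of $P(i+1)$ for some $i\geq 3$.

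The second step is to compute $\End(S)$ case by case. For shape (ii) the module $X$ discussed before Lemma~\ref{lem:T-to-X} plays a mutation role: the APR-tilt at the sink $n$ identifies certain 2-term silting complexes over $\Gamma_n$ with tilting complexes that, viewed through the reflection functor, become tilting modules over $\Lambda_n$. This explains the appearance of $\ca_t(\Lambda_n)$ as $\cc_1$ and of the mutation class $\ca_t^\mu(\mathbb{D}_n)$ in $\cc_2$; it also explains why the $B_n$-type pieces $\ca_t(B_3)$ enter $\cc_8$ and $\cc_{12}$, and why the mutation sets $\ca_s^\mu(A_{\bullet})$, $\ca_s^\mu(\mathbb{A}_{\bullet})$, and $\ca_s^{\mu,2}(\mathbb{A}_{\bullet})$ recalled from \cite{XieYangZhang25} control the families $\cc_5,\cc_6,\cc_8,\cc_{10}$. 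The remaining families $\cc_3,\cc_4,\cc_7,\cc_9,\cc_{11},\cc_{12},\cc_{13}$ all arise from shape (iv): the two blocks $T_1$ and $T_2$ are supported on disjoint full subquivers, so $\End(S)\cong\End(T_1)\times\End(T_2)$ by the same argument used in Case (III) of Section~\ref{subsec:the-classification-of-silted-algebras-of-Lambda_{n}}, and the factorisations match the symmetric products listed. Finally, the strictly shod family $\cc_{14}$ is defined by exclusion, and can be characterised via the effective-intersection criterion (Theorem~\ref{thm:the-global-dimension-of-algebra-by-intersect-relations}) just as in Theorem~\ref{thm:classification-of-strictly-shod-algebras}.

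The main obstacle will be verifying that the union is \emph{disjoint}. Coincidences between families occur exactly when two different 2-term silting complexes have isomorphic endomorphism algebras, and these coincidences were what forced the superscripts $\ca_t^1,\ca_t^2,\ca_t^3,\ca_t^4$ in Remark~\ref{rem:some-subsets} and the use of $\ca_s^{\mu,2}$ in $\cc_6$. To rule out overlaps I will: (a) use Corollaries~\ref{cor:A_n-bijective} and~\ref{cor:bijective-of-tilting-and-tilted-algebra} together with Lemma~\ref{lem:tilting-modules-and-tilted-algebras-with-P(n)} to separate the hereditary from the non-hereditary pieces inside each family; (b) compare the numbers $|P|$ of indecomposable shifted projective summands of the underlying silting complex, which is an invariant of $\End(S)$ by \cite[Proposition 2.1]{HappelRingel81}, to separate $\cc_1,\cc_2$ from $\cc_5$--$\cc_{10}$ and from $\cc_{11}$--$\cc_{13}$; and (c) inside each fixed value of $|P|$, compare the types of the direct product decomposition of $\End(S)$ (the number of connected components, and the Dynkin type of each), which separates for example $\cc_5$ from $\cc_6$ and $\cc_9$ from $\cc_{10}$.

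Having established the disjoint union, the theorem follows. As a by-product this setup provides the basis for computing $a_s(\Gamma_n)$ by summing the cardinalities $|\cc_i|$, using Lemma~\ref{lem:symmetric-product} together with the formulas for $a_t(\Lambda_m)$, $a_t(\mathbb{D}_m)$, $a_t(A_m)$, $a_t(B_m)$ and the counts of $\ca_t^j$ and $\ca_s^\mu$ recalled above, exactly as is done for $\Lambda_n$ in Proposition~\ref{prop: the-number-of-the-silted-algebras-induced-by-2-term-silting}.
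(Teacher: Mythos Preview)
Your proposal outlines a different organisational strategy from the paper's. The paper does not classify 2-term silting complexes over $\Gamma_n$ from scratch. Instead, it uses the BGP reflection functor to embed $K^{[-1,0]}(\proj\Gamma_n)$ as a full subquiver of the AR-quiver of $K^b(\proj\Lambda_n)$ (Figure~4), singles out the distinguished object $X=\tau^{-1}P(n)[1]$, and introduces the involution $(-)^*$ exchanging $X$ and $P(n)$. Proposition~\ref{prop:equ-of-2-term-silting-complexes} then says $S$ is 2-term silting over $\Gamma_n$ iff $S^*$ is 2-term silting over $\Lambda_n$. The entire case analysis is then run in the $\Lambda_n$ world, organised by (I) $X\notin S$, (II) $X\in S$ and $S^*$ is a tilting module over $\Lambda_n$, (III) $X\in S$ and $S^*$ is not tilting. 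Each of I--III is broken up by feeding in the already-proved Propositions~\ref{prop:the-forms-of-tilting-modules} and~\ref{prop:classification-of-silting} for $\Lambda_n$; the resulting pieces $\ca_s^{\bullet}(\Gamma_n)$ are then regrouped into $\cc_1,\ldots,\cc_{14}$ in Section~\ref{sec:the-proof-of-theorem-5.1}. Your plan---classify silting over $\Gamma_n$ directly via shapes (i)--(iv) and compute endomorphism algebras---could in principle reach the same destination, but it forfeits the reduction to $\Lambda_n$ and would force you to redo the wing analysis of Section~\ref{s:classification-of-2-term-silting} for a quiver with a sink at $n$. The paper's $(-)^*$ trick is the key labour-saving device you are missing.

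One specific point to flag: your step (b) asserts that the number of shifted projective summands of $S$ is an invariant of $\End(S)$, citing \cite[Proposition~2.1]{HappelRingel81}. That proposition constructs tilting modules from idempotents and does not establish such an invariance; in fact different 2-term silting complexes with isomorphic endomorphism algebras can have different numbers of shifted projectives (this is exactly why the paper must identify coincidences such as $\ca_s^{\rm II2,2b}(\Gamma_n)\subset\ca_t(\Lambda_n)$). What is invariant is the isomorphism type of $\End(S)$ as an algebra, in particular its number of blocks and their quivers with relations, and that is what the paper (implicitly) and your step~(c) use to separate the $\cc_i$.
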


We present the proof of this Theorem in Section~\ref{sec:the-proof-of-theorem-5.1}. Silted algebras of type $\Gamma_{n}$ forming the following families: (1) elements in $\cc_1$ are tilted algebras of type $\Lambda_{n}$; (2) elements in $\cc_2$ are tilted algebras of type $\mathbb{D}_n$ but not of type $\Lambda_n$; (3) elements in $\cc_3$ are tilted algebras of type $\Lambda_{m}\times A_{n-m}$, where $4\leq m\leq n-1$; (4) elements in $\cc_4$ are tilted algebras of type $\Lambda_{m}\times \mathbb{A}_{n-m}$, where $4\leq m\leq n-1$; (5) elements in $\cc_5$ are tilted algebras of type $A_{n-1}\times A_{1}$; (6) elements in $\cc_6$ are tilted algebras of type $\mathbb{A}_{n-1}\times A_{1}$; (7) elements in $\cc_7$ are tilted algebras of type $A_{n-3}\times A_{3}$; (8) elements in $\cc_8$ are tilted algebras of type $\mathbb{A}_{n-3}\times A_{3}$; (9) elements in $\cc_{9}$ are tilted algebras of type $A_{n-2}\times A_{1}\times A_{1}$; (10) elements in $\cc_{10}$ are tilted algebras of type $\mathbb{A}_{n-2}\times A_{1}\times A_{1}$; 
(11) elements in $\cc_{11}$ are tilted algebras of type $A_{n-4}\times A_{1}\times A_{3}$;
(13) elements in $\cc_{13}$ are tilted algebras of type $A_{n-m-1}\times A_{1}\times A_{1}\times \Lambda_{m}$ for any $4\leq m\leq n-1$;
(14) elements in $\cc_{14}$ are strictly shod algebras.

\subsection{Silted algebras of type $\Gamma_{n}$}
In this subsection, we provide a classification of the silted algebras of type $\Gamma_{n}$ and prove Theorem \ref{thm:classification-of-silted-algebras-of-mutation-type-D}.
As a consequence, we obtain some formulas for counting the number of silted algebras of type $\Gamma_{n}$.

Note that $Q'$ is derived from $Q$ through inverting the arrow starting from the unique source $n$. Consequently, there is a BGP reflection functor $F\colon K^{b}(\proj \Gamma_{n})\xrightarrow{\simeq}K^{b}(\proj \Lambda_n)$, as referenced in \cite{BernsteinGelfandPonomarev73}. It follows that the AR-quiver of $K^{[-1,0]}(\proj \Gamma_{n})$ can be identified with the full subquiver drawn in Figure 4 of the AR-quiver of $K^b(\proj \Lambda_n)$. Furthermore, $K^{[-1,0]}(\proj \Gamma_{n})$ can be recognized as the additive closure within $K^{b}(\proj \Lambda_n)$ containing the indecomposable objects from this AR-quiver. We will use this AR-quiver to study the silted algebras of type $\Gamma_n$. For instance, when we refer to $S$ as a 2-term silting complex over $\Gamma_n$, we intend that $S$ constitutes a silting complex over $\Lambda_n$ whose direct summands lie within this subquiver. Note that there is a distinguished vertex $X=\tau^{-1}P(n)[1]$. 

%We also identify the AR-quiver of $\mod \Gamma_n$ with its full subquiver in the framed area. 

$$\begin{xy}
0;<3pt,0pt>:<0pt,3pt>::
(70,0)*+{P(1)[1]}="21",
(70,-10)*+{P(2)[1]}="22",
(60,-20)*+{P(3)[1]}="23",
(50,-30)*+{\cdots}="24",
(40,-40)*+{P(n)[1]}="25",
(20,-50)*+{\mathrm{Figure \ 4: The~Auslander–Reiten~quiver~of~K^{[-1,0]}(\proj \Gamma_n)}},
(-10,0)*+{P(1)}="1",
(10,0)*+{\cdots}="2",
(30,0)*+{\cdots}="3",
(50,0)*+{\cdots}="4",
(-10,-10)*+{P(2)}="5",
(10,-10)*+{\cdots}="6",
(30,-10)*+{\cdots}="7",
(50,-10)*+{\cdots}="8",
(-20,-20)*+{P(3)}="9",
(0,-20)*+{\cdots}="10",
(20,-20)*+{\cdots}="11",
(40,-20)*+{I(3)}="12",
(-30,-30)*+{\cdots}="13",
(-10,-30)*+{\cdots}="14",
(10,-30)*+{\cdots}="15",
(30,-30)*+{\cdots}="16",
(-20,-40)*+{\cdots}="18",
(0,-40)*+{\cdots}="19",
(20,-40)*+{I(n)}="20",
(60,-40)*+{X}="17",
\ar"1";"10", \ar"2";"11", \ar"3";"12", \ar"5";"10", \ar"6";"11", \ar"7";"12",
\ar"9";"1", \ar"9";"5", \ar"10";"2", \ar"10";"6", \ar"11";"3",
\ar"11";"7", \ar"12";"4", \ar"12";"8", \ar"13";"9",\ar"13";"18",\ar"14";"10",\ar"10";"15",\ar"15";"11",
\ar"15";"20",\ar"20";"16",\ar"16";"12",\ar"9";"14",
\ar"4";"23",\ar"8";"23",\ar"12";"24",\ar"16";"25",\ar"24";"17",
\ar"25";"24",\ar"24";"23",\ar"23";"21",\ar"23";"22",
\end{xy}$$

Consider $T\in K^{[-1,0]}(\proj \Lambda_n)$ as a basic presilting complex that excludes $P(n)$ as a direct summand. Put 
$S=T\oplus X$ and $S'=T\oplus P(n)$. We denote this relationship by $S=S'^*$ and $S'=S^*$, treating $(-)^*$ as an operator. Similar to the proof of \cite[Proposition 5.3]{XieYangZhang25}, we have the following result.

\begin{proposition}\label{prop:equ-of-2-term-silting-complexes}
$S$ is a basic 2-term silting complex over $\Gamma_{n}$ if and only if $S'$ is a basic 2-term silting complex over $\Lambda_n$.
\end{proposition}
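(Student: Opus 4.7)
The plan is to mimic the proof of \cite[Proposition 5.3]{XieYangZhang25}, which establishes the analogous statement for the quivers of types $A_n$ and $B_n$. The essential ingredient is the BGP reflection equivalence $F\colon K^{b}(\proj \Gamma_n) \xrightarrow{\;\sim\;} K^{b}(\proj \Lambda_n)$ introduced at the start of this section, which in particular preserves the silting property.

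First I would translate the statement using the $F$-identification. By the convention fixed just before the proposition, ``$S$ is a $2$-term silting complex over $\Gamma_n$'' means that $S$ is a silting complex in $K^{b}(\proj \Lambda_n)$ whose indecomposable summands all belong to the subquiver of Figure~4. For $S = T \oplus X$ this support condition holds automatically: the summands of $T$ lie in $K^{[-1,0]}(\proj \Lambda_n) \setminus \{P(n)\}$, which sits inside the Figure~4 subquiver, and $X$ is a vertex of the same subquiver by construction. Likewise $S' = T \oplus P(n) \in K^{[-1,0]}(\proj \Lambda_n)$, so its being a $2$-term silting complex over $\Lambda_n$ is simply the property of being silting in $K^{b}(\proj \Lambda_n)$. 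The proposition therefore reduces to: $T \oplus X$ is silting in $K^{b}(\proj \Lambda_n)$ if and only if $T \oplus P(n)$ is.

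To prove this equivalence I would use silting mutation. Given that $T \oplus P(n)$ is silting with $P(n)$ as an indecomposable summand, the left mutation at $P(n)$ is governed by an exchange triangle
$$
P(n)\xrightarrow{\,f\,} B \longrightarrow P(n)^{\ast} \longrightarrow P(n)[1]
$$
in $K^{b}(\proj \Lambda_n)$, where $f$ is a minimal left $\add(T)$-approximation of $P(n)$ and $B \in \add(T)$. By the silting mutation theorem of Aihara--Iyama \cite{AiharaIyama12}, $T \oplus P(n)^{\ast}$ is again silting. The key claim is then $P(n)^{\ast} \cong X$, from which the forward implication follows, while the reverse implication is obtained symmetrically by the right mutation at $X$ in the complex $T \oplus X$.

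The main technical obstacle is the identification $P(n)^{\ast} \cong X = \tau^{-1}P(n)[1]$. I would verify this using the explicit $\mathbb{Z}\overrightarrow{\mathbb{D}}_n$-structure of the AR-quiver of $K^{b}(\proj \Lambda_n)$ recalled earlier: the irreducible morphisms out of $P(n)$, together with the hypothesis that $T$ contains no $P(n)$-summand, pin down $B$ and $f$, and the cone of $f$ is then computed from the standard Auslander--Reiten formula $\tau^{-1}_{K^{b}(\proj\Lambda_n)} = \nu^{-1}[1]$ on the projective object $P(n)$, yielding exactly the distinguished vertex $X$ at position $\tau^{-1}(P(n)[1])$ in the AR-quiver.
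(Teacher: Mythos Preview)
Your reduction via the BGP functor in the first two paragraphs is fine, but the mutation argument breaks down. The left mutation of $P(n)$ in the $2$-term silting complex $T\oplus P(n)$ is computed from a minimal left $\add(T)$-approximation $P(n)\to B$, and this genuinely depends on $T$; there is no reason for the cone to be independent of which summands $T$ happens to contain. More decisively, by \cite{AdachiIyamaReiten14} (see also \cite{BrustleYang13}) silting mutation of a $2$-term silting complex over $\Lambda_n$ is again $2$-term over $\Lambda_n$, i.e.\ the new summand $P(n)^{*}$ always lies in $K^{[-1,0]}(\proj\Lambda_n)$. But $X=\tau^{-1}(P(n)[1])$ is precisely the vertex sitting \emph{outside} this window (one step to the right of $P(n)[1]$ in Figure~4); concretely, $X$ is a complex concentrated in degrees $-2,-1$. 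Hence $P(n)^{*}\not\cong X$ for any $T$. Already for $T=P(1)\oplus\cdots\oplus P(n-1)$ the approximation is $P(n)\to P(n-1)$ and the cone is the module $S(n-1)$ in degree~$0$, not $X$; for other $T$ one gets yet other $2$-term complexes.

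The argument the paper has in mind (imitating \cite[Proposition~5.3]{XieYangZhang25}) is not a mutation but a direct comparison of the presilting conditions. Since $|S|=|S'|=n$ and both are $2$-term over the respective algebras, it suffices to show that $T\oplus P(n)$ is presilting iff $T\oplus X$ is. For $T\in K^{[-1,0]}(\proj\Lambda_n)$ one checks from degree reasons that $\Hom(P(n),T[{>}0])=0$ and $\Hom(T,X[{>}0])=0$ automatically, so the two conditions reduce to $\Hom(T,P(n)[1])=0$ and $\Hom(X,T[{>}0])=0$ respectively. The bridge between them is the relation $P(n)[1]=\tau X$: Serre duality gives $\Hom(T,P(n)[1])\cong D\Hom(X,T[1])$, and the remaining vanishing $\Hom(X,T[2])=0$ follows from the constraint that $T$ has no summand $P(n)$. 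This is the step you should supply in place of the mutation claim.
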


Using $X$ and $(-)^*$ we divide basic 2-term silting complexes $S$ over $\Gamma_n$ into the following three families:
\begin{itemize}
\item[(I)] $S$ is a basic 2-term silting complex over $\Gamma_n$ such that $X$ is not a direct summand of $S$;
\item[(II)] $S$ is a basic 2-term silting complex over $\Gamma_n$ such that $X$ is a direct summand of $S$ and $S^*$ is a tilting module over $\Lambda_n$;
\item[(III)] $S$ is a basic 2-term silting complex over $\Gamma_n$ such that $X$ is a direct summand of $S$ and $S^*$ is not a tilting module over $\Lambda_n$.
\end{itemize}
For $k=\mathrm{I,II,III}$, put
\begin{align*}
\ca_s^k(\Gamma_n)&=\{\End(S)\mid \text{$S$ belongs to the family (k)}\}/\cong.
\end{align*}
It is clear that 
\[
\ca_s(\Gamma_n)=\ca_s^{\rm I}(\Gamma_n)\cup \ca_s^{\rm II}(\Gamma_n)\cup\ca_s^{\rm III}(\Gamma_n).
\]

\subsubsection{{\rm Case~I}} \label{sec:case I}
Let $S$ be a basic 2-term silting complex over $\Gamma_n$ such that $X$ is not a direct summand of $S$. Then $S$ is a basic 2-term silting complex over $\Lambda_n$. By Proposition \ref{prop:classification-of-silting}, $S$ belongs to one of the following six families:
\begin{itemize}
  \item[(I1)] $T$, where $T$ is a basic tilting module over $\Lambda_n$ which does not contain $P(n)$ as a direct summand;
  \item[(I2)] $\tau^{-1}T$, where $T$ is a basic tilting module over $\Lambda_n$ which contains at least one injective module as a direct summand;
  \item[(I3)] $S=P(1)[1]\oplus T_1$, where $T_1$ is a basic tilting module of the wing of $P(2)$ which does not contain $P(n)$ as a direct summand;
  \item[(I4)] $S=P(1)[1]\oplus P(2)[1]\oplus T_1$, where $T_1$ is a basic tilting module of the wing of $P(3)$ which does not contain $P(n)$ as a direct summand;
  \item[(I5)] $S=T_1\oplus T_2$, where $T_{1}$ can be viewed as a basic 2-term tilting complex over the path algebra of quiver $\begin{xy}
(-10,0)*+{1}="2",
(0,0)*+{3}="3",
(10,0)*+{2}="4",
\ar"3";"2", \ar"3";"4",
\end{xy}$ and $T_{2}$ is a basic tilting module of the wing of $P(4)$ which does not contain $P(n)$ as a direct summand. 
  \item[(I6)] $S=T_1\oplus T_2$, where $T_1$ can be viewed as a basic 2-term tilting complex over $\Lambda_{i}$ and $T_2$ is a basic tilting module of the wing of $P(i+1)$ with $i\geq 4$ which does not contain $P(n)$ as a direct summand.
\end{itemize}
For $1\leq l\leq 6$, put
\begin{align*}
\ca_s^{\rm{I}l}(\Gamma_n)&=\{\End(S)\mid \text{$S$ belongs to the family (Il)}\}/\cong.
\end{align*}
Then
\begin{align}
\label{eqn:ca-I}
\ca_s^{\rm I}(\Gamma_n)&=\ca_s^{\rm I1}(\Gamma_n)\sqcup \ca_s^{\rm I2}(\Gamma_n)\sqcup\ca_s^{\rm I3}(\Gamma_n)\sqcup\ca_s^{\rm I4}(\Gamma_n)\sqcup\ca_s^{\rm I5}(\Gamma_n)\sqcup\ca_s^{\rm I6}(\Gamma_n),\nonumber\\
\ca_s^{\rm I1}(\Gamma_n)&=\ca_t^{2}(\Lambda_n), \ca_s^{\rm I2}(\Gamma_n)=\ca_t(\Lambda_n), \ca_s^{\rm I4}(\Gamma_n)=\ca_t^{2}(A_{n-2}), \\
\ca_s^{\rm I5}(\Gamma_n)&=\ca_t(B_{3})\times_s\ca_t^{2}(A_{n-3}), \ca_s^{\rm I6}(\Gamma_n)=\bigcup\limits_{i=4}^{n-1}\ca_t^{2}(A_{n-i})\times_s \ca_t(\Lambda_{i}).\nonumber
\end{align}

In the case (I3), according to the proof of Theorem \ref{thm:classification-of-silted-algebras-of-D}, we have the following three subcases:

\emph{Subcase (I3.1):} $T_1=P(2)\oplus T'$, where $T'$ is a tilting module of the wing of $P(3)$ which does not contain $P(n)$ as a direct summand. This implies that 
$$\End(S)=\End(P(1)[1])\oplus \End(T_1),$$ where $\End(T_1)$ is a tilted algebra of type $A_{n-1}$.
Denote by $\ca_s^{\rm{I3},1}(\Gamma_n)$ the set of isoclasses of $\End(S)$ for such $S$'s. 
%So $\ca_s^{\rm{I3},1}(\Gamma_n)=\ca_t^{2}(A_{n-1})\times_s \ca_t(A_{1})$;

\emph{Subcase (I3.2):} $T_1=P(2)\oplus T'$, where $T'$ is a basic tilting module of the wing of $I(n{-}1)$ in the AR-quiver of $\mod A_{n{-}1}$. By Remark \ref{rem:AR-quiver-parts}, $\Hom(\tau^{-1}P(1),P(1)[1])\neq 0$. Moreover, $\Hom(T_1,P(1))=0$ and $\Hom(P(1)[2],T_1)=0$. So $S$ is a 2-term silting complex over $\Lambda_n$, and  hence a 2-term tilting complex over $\Gamma_n$. By Corollary \ref{cor:tilting complex to tilted algebra}, $\End(S)$ is a tilted algebra of type $\mathbb{D}_n$ but not of type $\Lambda_n$.
Denote by $\ca_s^{\rm{I3},2}(\Gamma_n)$ the set of isoclasses of $\End(S)$ for such $S$'s. 

\emph{Subcase (I3.3):} $T_1=P(2)\oplus T'\oplus T''$, where $T'$ is a basic tilting module of the wing of $P(i)$ for some $4\leq i\leq n-1$ which does not contain $P(n)$ as a direct summand and $T''$ is a basic tilting module of the wing of $I(i-2)$ within the AR-quiver of $\mod A_{n-1}$. By Theorem \ref{thm:classification-of-strictly-shod-algebras}, $\End(S)$ is a strictly shod algebra. 
Put
\begin{align*}
\ca_{ss}'(\Lambda_n)=\{\End(S)\mid S \text{ is of the form of subcase (I3.3)}\}.
\end{align*}
Denote by $\ca_{s}^{\rm{I3},3}(\Gamma_n)$ the set of isoclasses of $\End(S)$ for such $S$'s. So $\ca_{s}^{\rm{I3},2}(\Gamma_n)=\ca_{ss}'(\Lambda_n)$.

To summarise, we have 
\begin{align}
\ca_s^{\rm I3}(\Gamma_n)&=\ca_s^{\rm{I3},1}(\Gamma_n)\sqcup \ca_s^{\rm{I3},2}(\Gamma_n)\sqcup \ca_s^{\rm{I3},3}(\Gamma_n).
%(\ca_t^{2}(A_{n-1})\times_s \ca_t(A_{1}))\cup \ca_{ss}'(\Lambda_n).
\end{align}

As a corollary, we have

\begin{corollary}\label{cor:tilted-are-silted-of-B}
All tilted algebras of type $\Lambda_n$ are silted of type $\Gamma_{n}$.
\end{corollary}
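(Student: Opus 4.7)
The plan is to derive the statement directly from the identity $\ca_s^{\rm I2}(\Gamma_n)=\ca_t(\Lambda_n)$ recorded in display~(\ref{eqn:ca-I}) above. Since family~(I2) was introduced there as a family of 2-term silting complexes over $\Gamma_n$, we have the automatic containment $\ca_s^{\rm I2}(\Gamma_n)\subseteq\ca_s(\Gamma_n)$; combining this with the cited identity yields $\ca_t(\Lambda_n)\subseteq\ca_s(\Gamma_n)$, which is precisely the assertion of the corollary.

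To see why the underlying identity itself holds in the direction that is really needed for the corollary, I would fix a tilted algebra $B=\End_{\Lambda_n}(T)$ with $T$ a basic tilting $\Lambda_n$-module, and produce an explicit 2-term silting complex over $\Gamma_n$ with endomorphism algebra $B$ by working inside the $\tau$-orbit of $T$ in $K^b(\proj\Lambda_n)$. Because $\tau^{-1}$ is a derived auto-equivalence, every iterated translate $\tau^{-k}T$ still has endomorphism algebra isomorphic to $B$, and Proposition~\ref{prop:classification-of-silting}(2) guarantees that once some translate $T'=\tau^{-k}T$ acquires an injective summand, $\tau^{-1}T'$ is a 2-term silting complex in $K^b(\proj\Lambda_n)$ belonging to family~(I2) at the $\Lambda_n$ level.

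The one technical subtlety is to arrange that this representative lies inside the subcategory $K^{[-1,0]}(\proj\Gamma_n)$, i.e.\ in the subquiver depicted in Figure~4 — equivalently, that its summands avoid both $P_{\Lambda_n}(n)$ and $P_{\Lambda_n}(n)[1]$, with the distinguished vertex $X=\tau^{-1}P(n)[1]$ available in their place. This would be the main obstacle in an independent proof, but it is exactly the verification already carried out in the body of the section when establishing the equality $\ca_s^{\rm I2}(\Gamma_n)=\ca_t(\Lambda_n)$ of~(\ref{eqn:ca-I}). Consequently the corollary reduces to a one-line invocation of that display.
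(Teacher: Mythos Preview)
Your proposal is correct and follows the paper's approach: the paper gives no explicit proof, simply presenting the statement as an immediate corollary of the Case~I analysis, and your first paragraph captures exactly this deduction from the identity $\ca_s^{\rm I2}(\Gamma_n)=\ca_t(\Lambda_n)$ in display~(\ref{eqn:ca-I}). Your second and third paragraphs go further than the paper by sketching why that identity holds, which is fine but not something the paper itself spells out.
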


\subsubsection{{\rm Case~II}} \label{sec:case II}
Let $S$ be a basic 2-term silting complex over $\Gamma_n$ such that $X$ is a direct summand of $S$ and $S^*$ is a tilting module over $\Lambda_n$. By Proposition \ref{prop:the-forms-of-tilting-modules}, $S^*$ belongs to one of the following four families:
\begin{itemize}
  \item[(II1)] $S^*=P(1)\oplus P(2)\oplus T_{2}$, where $T_{2}$ is a basic tilting module of the wing of $P(3)$ which contains $P(n)$ as a direct summand;
\item[(II2)] $S^*=T_{1}\oplus T_{2}$, where $T_{1}=\tau^{-1}T'$ for some basic tilting module $T'$ over some subquiver of $Q$ and $T_{2}$ is a basic tilting module of the wing of $P(i)$ for some $4\leq i\leq n$ which contains $P(n)$ as a direct summand;
\item[(II3)] $S^*=P(1)\oplus T_{1}\oplus T_{2}$, where $T_{1}=\tau^{-1}T''$ for some basic tilting module $T''$ over some subquiver of $Q$ and $T_{2}$ is a basic tilting module of the wing of $P(i)$ for some $4\leq i\leq n$ which contains $P(n)$ as a direct summand.
\item[(II4)] $S^*=P(1)\oplus P(2)\oplus T_{1}\oplus T_{2}$, where $T_{1}=\tau^{-1}T''$ for some basic tilting module $T''$ over some subquiver of $Q$ and $T_{2}$ is a basic tilting module of the wing of $P(i)$ for some $4\leq i\leq n$ which contains $P(n)$ as a direct summand.
\end{itemize}
For $1\leq w\leq 4$, put
\begin{align*}
\ca_s^{\rm{II}w}(\Gamma_n)&=\{\End((S^*/P(n))\oplus X)\mid \text{$S^*$ belongs to the family (IIw)}\}/\cong.
\end{align*}
Then
\begin{align}
\label{eqn:ca-I}
\ca_s^{\rm II}(\Gamma_n)&=\ca_s^{\rm II1}(\Gamma_n)\sqcup \ca_s^{\rm II2}(\Gamma_n)\sqcup\ca_s^{\rm II3}(\Gamma_n)\sqcup\ca_s^{\rm II4}(\Gamma_n).
\end{align}

In the case (II1), $T=(P(1)\oplus P(2)\oplus T_2)/P(n)$, we have the following two subcases:

\emph{Subcase (II1.1):} $T_2$ contains $P(n{-}1)$ as a direct summand. By Lemma \ref{lem:T-to-X} and Remark \ref{rem:AR-quiver-parts}, $\Hom(T_2, X)=0$, and hence 
$$\End(S)=\End(T)\times \End(X),$$ 
where $\End(T)$ is a tilted algebra of type $\Lambda_{n-1}$. Denote by $\ca_s^{\rm{II1},1}(\Gamma_n)$ the set of isoclasses of $\End(S)$ for such $S$'s. 
%Then,
%\[
%\ca_s^{\rm{II1},1}(\Gamma_n)=\ca_t^1(\Lambda_{n-1})\times_s\ca_t(A_1) =\ca_t^1(\Lambda_{n-1})\times \ca_t(A_1).
%\]

\emph{Subcase (II1.2):} $T_2$ does not contain $P(n{-}1)$ as a direct summand. By Lemma \ref{lem:T-to-X} and Remark \ref{rem:AR-quiver-parts}, $\Hom(T_2, X)\neq0$. Moreover, 
$$\Hom(X,T[-1])=0=\Hom(T,X[-1]),$$ this shows that $S$ is a 2-term tilting complex over $\Gamma_n$. It follows that $\End(S)$ is tilted of type $\mathbb{D}_n$ but not tilted of type $\Lambda_n$. Denote by $\ca_s^{\rm{II1},2}(\Gamma_n)$ the set of isoclasses of $\End(S)$ for such $S$'s. Then,
\begin{align}
\ca_s^{\rm{II1}}(\Gamma_n)&=\ca_s^{\rm{II1},1}(\Gamma_n)\sqcup \ca_s^{\rm{II1},2}(\Gamma_n).
\end{align}

In the case (II2), $T=(T_1\oplus T_2)/P(n)$, we have the following two subcases:

\emph{Subcase (II2.1):} $i=4$ and $n\geq 5$. By Remark \ref{rem:I(1)-injective}, $T_1=I(1)\oplus I(2)\oplus I(n)$, it implies that $\Hom(T_1,X)=0$.
\begin{itemize}
  \item [(a)] $T_2$ contains $P(n-1)$ as a direct summand. $\Hom(T_2, X)=0$. Since $\Hom(T_2,T_1)\neq 0$, 
$$\End(S)=\End(T)\times \End(X),$$ 
where $\End(T)$ is a tilted algebra of type $\Lambda_{n-1}$. Denote by $\ca_s^{\rm{II2},1a}(\Gamma_n)$ the set of isoclasses of $\End(S)$ for such $S$'s. 
%Then,
%\[
%\ca_s^{\rm{II2},1a}(\Gamma_n)=\ca_t^1(\Lambda_{n-1})\times_s\ca_t(A_1) =\ca_t^1(\Lambda_{n-1})\times \ca_t(A_1).
%\]
\end{itemize}

\begin{remark}
If $n=4$, then $\Hom(T_1,X)\neq 0$. Thus $\End(S)$ is a tilted algebra of type $\Lambda_4$.
\end{remark}

\begin{itemize}
  \item [(b)] $T_2$ does not contain $P(n-1)$ as a direct summand. $\Hom(T_2, X)\neq0$. Moreover, $\Hom(T_2,T_1)\neq 0$ and $$\Hom(X,T[-1])=0=\Hom(T,X[-1]),$$ this show that $\End(S)$ is a tilted algebra of type $\mathbb{D}_{n}$. Denote by $\ca_s^{\rm{II2},1b}(\Gamma_n)$ the set of isoclasses of $\End(S)$ for such $S$'s. Then,
\[
\ca_s^{\rm{II2},1}(\Gamma_n)=\ca_s^{\rm{II2},1a}(\Gamma_n)\sqcup\ca_s^{\rm{II2},1b}(\Gamma_n).
\]
\end{itemize}

\emph{Subcase (II2.2):} $i\geq 5$. $T_1$ can be viewed as a basic tilting module over $\Lambda_{i{-}1}$ and $T_2$ is a basic tilting module of the wing of $P(i)$.
\begin{itemize}
  \item [(a)] $T_2$ contains $P(n{-}1)$ as a direct summand. $\Hom(T_2, X)=0$. Since $\Hom(T_2,T_1)\neq 0$, 
$$\End(S)=\End(T)\times \End(X),$$ 
where $\End(T)$ is a tilted algebra of type $\Lambda_{n-1}$. Denote by $\ca_s^{\rm{II2},2a}(\Gamma_n)$ the set of isoclasses of $\End(S)$ for such $S$'s. 
%Then,
%\[
%\ca_s^{\rm{II2},2a}(\Gamma_n)=\ca_t^1(\Lambda_{n-1})\times_s\ca_t(A_1) =\ca_t^1(\Lambda_{n-1})\times \ca_t(A_1).
%\]
\end{itemize}
\begin{itemize}
  \item [(b)] $T_2$ does not contain $P(n{-}1)$ as a direct summand. $\Hom(T_2, X)\neq0$. By Corollary \ref{cor:bijective-of-tilting-and-tilted-algebra}, $\End(S)$ is a tilted algebra of type $\mathbb{D}_n$. They form two groups: 
      \begin{itemize}
        \item If $i=n$, then $\tau^{m}S$ is a tilting module over $\Lambda_n$ for some positive integer $m$, thus, $\End(S)$ is a tilted algebra of type $\Lambda_{n}$.
        \item If $i<n$, then $S$ is a 2-term tilting complex over $\Gamma_n$, this shows that $\End(S)$ is a tilted algebra of type $\mathbb{D}_n$ but not of type $\Lambda_n$.
      \end{itemize}
      Denote by $\ca_s^{\rm{II2},2b}(\Gamma_n)$ the set of those tilted algebras of type $\Lambda_{n}$ and by $\ca_s^{\rm{II2},2b'}(\Gamma_n)$ the set of those tilted algebras of type $\mathbb{D}_n$ but not of type $\Lambda_n$.
\end{itemize}
Then, 
\begin{align}
\ca_s^{\rm{II2}}(\Gamma_n) &=\ca_s^{\rm{II2},1a}(\Gamma_n)\sqcup\ca_s^{\rm{II2},1b}(\Gamma_n)\sqcup\ca_s^{\rm{II2},2a}(\Gamma_n)\sqcup\ca_s^{\rm{II2},2b}(\Gamma_n)\sqcup\ca_s^{\rm{II2},2b'}(\Gamma_n).
\end{align}

In the case (II3), $T=(P(1)\oplus T_1\oplus T_2)/P(n)$, we have the following two subcases:
 
\emph{Subcase (II3.1):} $T_2$ contains $P(n-1)$ as a direct summand. $\Hom(T_2, X)=0$. Moreover, $\Hom(T_1, X)=0$, and hence 
$$\End(S)=\End(T)\times \End(X),$$ 
where $\End(T)$ is a tilted algebra of type $\Lambda_{n-1}$. Denote by $\ca_s^{\rm{II3},1}(\Gamma_n)$ the set of isoclasses of $\End(S)$ for such $S$'s. 
%Then,
%\[
%\ca_s^{\rm{II3},1}(\Gamma_n)=\ca_t^1(\Lambda_{n-1})\times_s\ca_t(A_1) =\ca_t^1(\Lambda_{n-1})\times \ca_t(A_1).
%\]

\emph{Subcase (II3.2):} $T_2$ does not contain $P(n-1)$ as a direct summand. $\Hom(T_2, X)\neq0$. Moreover, $\Hom(T_2, T_1)\neq0$, and 
$$\Hom(X,T[-1])=0=\Hom(T,X[-1]),$$ this shows that $S$ is a 2-term tilting complex over $\Gamma_n$. It follows that $\End(S)$ is tilted of type $\mathbb{D}_n$ but not tilted of type $\Lambda_n$. Denote by $\ca_s^{\rm{II3},2}(\Gamma_n)$ the set of isoclasses of $\End(S)$ for such $S$'s. Then,
\begin{align}
\ca_s^{\rm{II3}}(\Gamma_n)&=\ca_s^{\rm{II3},1}(\Gamma_n)\sqcup \ca_s^{\rm{II3},2}(\Gamma_n).
\end{align}

In the case (II4), $T=(P(1)\oplus P(2)\oplus T_1 \oplus T_2)/P(n)$, we have the following two subcases:
 
\emph{Subcase (II4.1):} $T_2$ contains $P(n-1)$ as a direct summand. $\Hom(T_2, X)=0$. Moreover, $\Hom(T_1, X)=0$, and hence 
$$\End(S)=\End(T)\times \End(X),$$ 
where $\End(T)$ is a tilted algebra of type $\Lambda_{n-1}$. Denote by $\ca_s^{\rm{II4},1}(\Gamma_n)$ the set of isoclasses of $\End(S)$ for such $S$'s. 
%Then,
%\[
%\ca_s^{\rm{II4},1}(\Gamma_n)=\ca_t^1(\Lambda_{n-1})\times_s\ca_t(A_1) =\ca_t^1(\Lambda_{n-1})\times \ca_t(A_1).
%\]

\emph{Subcase (II4.2):} $T_2$ does not contain $P(n-1)$ as a direct summand. $\Hom(T_2, X)\neq0$. Moreover, $\Hom(T_2, T_1)\neq0$, and 
$$\Hom(X,T[-1])=0=\Hom(T,X[-1]),$$ this shows that $S$ is a 2-term tilting complex over $\Gamma_n$. It follows that $\End(S)$ is tilted of type $\mathbb{D}_n$ but not tilted of type $\Lambda_n$. Denote by $\ca_s^{\rm{II4},2}(\Gamma_n)$ the set of isoclasses of $\End(S)$ for such $S$'s. Then,
\begin{align}
\ca_s^{\rm{II4}}(\Gamma_n)&=\ca_s^{\rm{II4},1}(\Gamma_n)\sqcup \ca_s^{\rm{II4},2}(\Gamma_n).
\end{align}

\subsubsection{{\rm Case~III}} \label{sec:case III}
$S$ is a basic 2-term silting complex over $\Gamma_n$ such that $X$ is a direct summand of $S$ and $S^*$ is not a tilting module over $\Lambda_n$. According to Proposition \ref{prop:classification-of-silting}, $S^*$ belongs to one of the following four families:
\begin{itemize}
  \item[(III1)] $S^*=P(1)[1]\oplus T_1$, where $T_1$ is a basic tilting module of the wing of $P(2)$ and which has $P(n)$ as a direct summand;
  \item[(III2)] $S^*=P(1)[1]\oplus P(2)[1]\oplus T_1$, where $T_1$ is a basic tilting module of the wing of $P(3)$ and which has $P(n)$ as a direct summand;
  \item[(III3)] $S^*=T_1\oplus T_2$, where $T_{1}$ can be viewed as a basic 2-term tilting complex over the path algebra of quiver $\begin{xy}
(-10,0)*+{1}="2",
(0,0)*+{3}="3",
(10,0)*+{2}="4",
\ar"3";"2", \ar"3";"4",
\end{xy}$ and $T_{2}$ is a basic tilting module of the wing of $P(4)$ and which has $P(n)$ as a direct summand. 
  \item[(III4)] $S^*=T_1\oplus T_2$, where $T_1$ can be viewed as a basic 2-term tilting complex over $\Lambda_{i}$ and $T_2$ is a basic tilting module of the wing of $P(i+1)$ (with $i\geq 4$) and which has $P(n)$ as a direct summand.
\end{itemize}
For $1\leq q\leq 4$, put
\begin{align*}
\ca_s^{\rm{III}q}(\Gamma_n)&=\{\End((S^{*}/P(n)))\oplus X)\mid \text{$S^{*}$ belongs to the family (IIIq)}\}/\cong.
\end{align*}

In the case (III1), we have the following two subcases:

\emph{Subcase (III1.1):} $T_1=P(2)\oplus T'$, where $T'$ is a tilting module of the wing of $P(3)$ which has $P(n)$ as a direct summand.  Moreover, $\Hom(T_1,P(1)[1])=0$.
\begin{itemize}
  \item [(a)] $T_1$ contains $P(n-1)$ as a direct summand. Thus, $\Hom(T_1,X)=0$. This implies that 
$$\End(S)=\End(P(1)[1])\oplus \End(T_1/P(n))\oplus \End(X),$$ where $\End(T_1/P(n))$ is a tilted algebra of type $A_{n-2}$. Denote by $\ca_s^{\rm{III1},1a}(\Gamma_n)$ the set of isoclasses of $\End(S)$ for such $S$'s. 
%So $\ca_s^{\rm{III1},1a}(\Gamma_n)=\ca_t^{1}(A_{n-2})\times_s \ca_t(A_{1})\times_s \ca_t(A_{1})$;

  \item [(b)] $T_1$ does not contain $P(n-1)$ as a direct summand. Thus, $\Hom(T_1,X)\neq0$. This implies that 
$$\End(S)=\End(P(1)[1])\oplus \End(T_1^*=T_1/P(n)\oplus X).$$ 
Moreover, there are triangle equivalences $\thick(T_1)=\thick(T_1^*)\simeq K^b(\proj A_{n-1})\simeq K^b(\proj B_{n-1})$, $T_1^*$ can be considered as a 2-term silting complex over $B_{n-1}$ which has $X$ as a direct summand. Thus, $\End(T_1/P(n)\oplus X)$ is a tilted algebra of type $\mathbb{A}_{n-1}$ and forms two groups: 
\begin{itemize}
  \item tilted algebra of type $A_{n-1}$;
  \item tilted algebra of type $\mathbb{A}_{n-1}$ but not of type $A_{n-1}$.
\end{itemize}
Denote by $\ca_s^{\rm{III1},1b}(\Gamma_n)$ the set of those tilted algebras of type $A_{n-1}\times A_{1}$ and by $\ca_s^{\rm{III1},1b'}(\Gamma_n)$ the set of those tilted algebras of type $\mathbb{A}_{n-1}\times A_{1}$. 
%$\ca_s^{\rm{III1},1b}(\Gamma_n)=\ca_t^{1}(A_{n-1})\times_s \ca_t(A_{1})$;
\end{itemize}

\emph{Subcase (III1.2):} $T_1=P(2)\oplus T'\oplus T''$, where $T'$ is a basic tilting module of the wing of $P(i)$ for some $4\leq i\leq n$ which has $P(n)$ as a direct summand and $T''$ is a basic tilting module of the wing of $I(i-2)$ within the AR-quiver of $\mod A_{n-1}$. Moreover, $\Hom(P(1)[1], T_1)\neq 0$.
By Theorem \ref{thm:classification-of-strictly-shod-algebras}, $\End(S^*)$ is a strictly shod algebra. 
\begin{itemize}
  \item [(a)] $T'$ contains $P(n-1)$ as a direct summand. Thus, $\Hom(T_1,X)=0$. This implies that 
$$\End(S)=\End(P(1)[1]\oplus T_1/P(n))\oplus \End(X).$$ 
By the proof of Theorem \ref{thm:classification-of-strictly-shod-algebras}, we obtain that $\End(P(1)[1]\oplus T_1/P(n))$ is a strictly shod algebra.
Denote by $\ca_s^{\rm{III1},2a}(\Gamma_n)$ the set of isoclasses of $\End(S)$ for such $S$'s. 
%$\ca_s^{\rm{III1},1a}(\Gamma_n)=\ca_t^{1}(A_{n-2})\times_s \ca_t(A_{1})\times_s \ca_t(A_{1})$;

  \item [(b)] $T'$ does not contain $P(n-1)$ as a direct summand. Thus, $\Hom(T_1,X)\neq0$. This implies that 
$$\End(S)=\End(P(1)[1]\oplus (T_1/P(n))\oplus X).$$ 
\begin{itemize}
  \item If $i=n$, then $\End(S)$ is a tilted algebra of type $\mathbb{D}_n$;
  \item If $i<n$, then $\End(S)$ is a strictly shod algebra.
\end{itemize}
Denote by $\ca_s^{\rm{III1},2b}(\Gamma_n)$ the set of those tilted algebras of type $\mathbb{D}_n$ and by $\ca_s^{\rm{III1},2b'}(\Gamma_n)$ the set of those strictly shod algebras. 
%$\ca_s^{\rm{III1},1b}(\Gamma_n)=\ca_t^{1}(A_{n-1})\times_s \ca_t(A_{1})$;
\end{itemize}

Then, 
\begin{align}
\ca_s^{\rm{III1}}(\Gamma_n) &=\ca_s^{\rm{III1},1a}(\Gamma_n)\sqcup\ca_s^{\rm{III1},1b}\sqcup\ca_s^{\rm{III1},1b'}(\Gamma_n)\sqcup\ca_s^{\rm{III1},2a}(\Gamma_n)\sqcup\ca_s^{\rm{III1},2b}(\Gamma_n)\sqcup\ca_s^{\rm{III1},2b'}(\Gamma_n).
\end{align}

In the case (III2), we have the following two subcases:

\emph{Subcase (III2,1):} $T_1$ contains $P(n-1)$ as a direct summand. Thus, $\Hom(T_1,X)=0$. This implies that 
$$\End(S)=\End(P(1)[1])\oplus \End(P(2)[1])\oplus \End(T_1/P(n))\oplus \End(X),$$ where $\End(T_1/P(n))$ is a tilted algebra of type $A_{n-3}$. Denote by $\ca_s^{\rm{III2,1}}(\Gamma_n)$ the set of isoclasses of $\End(S)$ for such $S$'s. 
So $\ca_s^{\rm{III2,1}}(\Gamma_n)=\ca_t^{1}(A_{n-3})\times_s \ca_t(A_{1})\times_s \ca_t(A_{1})\times_s \ca_t(A_{1})$;

\emph{Subcase (III2,2):} $T_1$ does not contain $P(n-1)$ as a direct summand. Thus, $\Hom(T_1,X)\neq0$. This implies that 
$$\End(S)=\End(P(1)[1])\oplus \End(P(2)[1])\oplus\End(T_1/P(n)\oplus X).$$ Thus, $\End(T_1/P(n)\oplus X)$ is a tilted algebra of type $\mathbb{A}_{n-2}$ and forms two groups: 
\begin{itemize}
  \item [--] tilted algebra of type $A_{n-2}$;
  \item [--] tilted algebra of type $\mathbb{A}_{n-2}$ but not of type $A_{n-2}$.
\end{itemize}
Denote by $\ca_s^{\rm{III2},2a}(\Gamma_n)$ the set of those tilted algebras of type $A_{n-2}\times A_{1}\times A_{1}$ and by $\ca_s^{\rm{III2},2b}(\Gamma_n)$ the set of those tilted algebras of type $\mathbb{A}_{n-2}\times A_{1}\times A_{1}$. 
%$\ca_s^{\rm{III1},1b}(\Gamma_n)=\ca_t^{1}(A_{n-1})\times_s \ca_t(A_{1})$;
Then, 
\begin{align}
 \ca_s^{\rm{III2}}(\Gamma_n)&=\ca_s^{\rm{III2,1}}(\Gamma_n)\sqcup \ca_s^{\rm{III2},2a}(\Gamma_n) \sqcup \ca_s^{\rm{III2},2b}(\Gamma_n).
\end{align}

In the case (III3), we have the following two subcases:

\emph{Subcase (III3,1):} $T_2$ contains $P(n-1)$ as a direct summand. Thus, $\Hom(T_2,X)=0$. This implies that 
$$\End(S)=\End(T_1)\oplus \End(T_2/P(n))\oplus \End(X),$$ where $\End(T_1)$ is a tilted algebra of type $A_{3}$ and $\End(T_2/P(n))$ is a tilted algebra of type $A_{n-4}$. Denote by $\ca_s^{\rm{III3,1}}(\Gamma_n)$ the set of isoclasses of $\End(S)$ for such $S$'s. 
So $\ca_s^{\rm{III3,1}}(\Gamma_n)=\ca_t^{1}(A_{n-4})\times_s \ca_t(A_{1})\times_s \ca_t(A_{3})$;

\emph{Subcase (III3,2):} $T_2$ does not contain $P(n-1)$ as a direct summand. Thus, $\Hom(T_2,X)\neq0$. This implies that 
$$\End(S)=\End(T_1)\oplus \End(T_2/P(n)\oplus X).$$ 
Thus, $\End(T_2/P(n)\oplus X)$ is tilted of type $\mathbb{A}_{n-3}$ and forms the following two classes:
\begin{itemize}
  \item [(a)] tilted algebra of type $A_{n-3}$;
  \item [(b)] tilted algebra of type $\mathbb{A}_{n-3}$ but not of type $A_{n-3}$.
\end{itemize}
Denote by $\ca_s^{\rm{III3},2a}(\Gamma_n)$ the set of those tilted algebras of type $A_{n-3}\times A_{3}$ and by $\ca_s^{\rm{III3},2b}(\Gamma_n)$ the set of those tilted algebras of type $\mathbb{A}_{n-3}\times A_{3}$. 
%$\ca_s^{\rm{III1},1b}(\Gamma_n)=\ca_t^{1}(A_{n-1})\times_s \ca_t(A_{1})$;
Then, 
\begin{align}
 \ca_s^{\rm{III3}}(\Gamma_n)&=\ca_s^{\rm{III3,1}}(\Gamma_n)\sqcup \ca_s^{\rm{III3},2a}(\Gamma_n) \sqcup \ca_s^{\rm{III3},2b}(\Gamma_n).
\end{align}

In the case (III4), $T_2$ is a tilting module of the wing of $P(i+1)$ with $4\leq i\leq n-1$ and has $P(n)$ as a direct summand. Consider $T_2^*=T_2/P(n)\oplus X$. We have the following two subcases:

\begin{itemize}
  \item [(1)] If $i=n-1$, in this case, $\tau^{m}(S)$ is a tilting module over $\Lambda_{n}$ for some positive integer $m\geq 2$. Denote by $\ca_s^{\rm{III4,1}}(\Gamma_n)$ the set of isoclasses of $\End(S)$ for such $S$'s. 
  \item [(2)] If $i\neq n-1$, then $\End(S)=\End(T_1)\oplus \End(T_2^*)$. Moreover, there are triangle equivalences $\thick(T_2)=\thick(T_2^*)\simeq K^b(\proj A_{n-i})\simeq K^b(\proj B_{n-i})$, $T_2^*$ can be considered as a 2-term silting complex over $B_{n-i}$ which has $X$ as a direct summand. Thus, $\End(T_2^*)$ falls into the following three classes: 
\begin{itemize}
  \item tilted algebra of type $A_{n-i-1}\times A_{1}$;
  \item tilted algebra of type $A_{n-i}$;
  \item tilted algebra of type $\mathbb{A}_{n-i}$ but not of type $A_{n-i}$.
\end{itemize}
Denote by $\ca_s^{\rm{III4,2a}}(\Gamma_n)$ the set of those tilted algebras of type $A_{n-i-1}\times A_{1}\times \Lambda_{i}$. So $\ca_s^{\rm{III4,2a}}(\Gamma_n)=\bigcup\limits_{i=4}^{n-1}\ca_t^{1}(A_{n-i-1})\times_s \ca_t(A_{1})\times_s \ca_t(\Lambda_{i})$;
Denote by $\ca_s^{\rm{III4},2b}(\Gamma_n)$ the set of those tilted algebras of type $A_{n-i}\times \Lambda_{i}$ and by $\ca_s^{\rm{III4},2c}(\Gamma_n)$ the set of those tilted algebras of type $\mathbb{A}_{n-i}\times \Lambda_{i}$ for any $4\leq i\leq n-1$. 
\end{itemize}
%$\ca_s^{\rm{III1},1b}(\Gamma_n)=\ca_t^{1}(A_{n-1})\times_s \ca_t(A_{1})$;
Then, 
\begin{align}
 \ca_s^{\rm{III4}}(\Gamma_n)&=\ca_s^{\rm{III4,1}}(\Gamma_n)\sqcup \ca_s^{\rm{III4},2a}(\Gamma_n) \sqcup \ca_s^{\rm{III4},2b}(\Gamma_n) \sqcup \ca_s^{\rm{III4},2c}(\Gamma_n).
\end{align}

\subsubsection{The proof of Theorem \ref{thm:classification-of-silted-algebras-of-mutation-type-D}}
\label{sec:the-proof-of-theorem-5.1}
We combine results in Subsections~\ref{sec:case I}, \ref{sec:case II} and \ref{sec:case III} to prove Theorem~\ref{thm:classification-of-silted-algebras-of-mutation-type-D}.
Put 
\begin{align*}
\cc_1&=\ca_s^{\rm{I}1}(\Gamma_n)\cup \ca_s^{\rm{I}2}(\Gamma_n)\cup \ca_s^{\rm{II2},2b}(\Gamma_n)\cup \ca_s^{\rm{III4},1}(\Gamma_n),\\
\cc_2&=\ca_s^{\rm{I3},2}(\Gamma_n)\cup \ca_s^{\rm{II1},2}(\Gamma_n)\cup \ca_s^{\rm{II2},1b}(\Gamma_n)\cup \ca_s^{\rm{II2},2b'}(\Gamma_n)\cup \ca_s^{\rm{II3},2}(\Gamma_n)\cup \ca_s^{\rm{II4},2}(\Gamma_n)\cup \ca_s^{\rm{III1},2b}(\Gamma_n),\\
\cc_3&=\ca_s^{\rm{I6}}(\Gamma_n)\cup\ca_s^{\rm{II1},1}(\Gamma_n)\cup\ca_s^{\rm{II2},1a}(\Gamma_{n})\cup\ca_s^{\rm{II2},2a}(\Gamma_{n})\cup\ca_s^{\rm{II3},1}(\Gamma_{n})
\cup\ca_s^{\rm{II4},1}(\Gamma_{n})\cup\ca_s^{\rm{III4},2b}(\Gamma_{n}),\\
\cc_4&=\ca_s^{\rm{III4},2c}(\Gamma_{n}),\\
\cc_5&=\ca_s^{\rm{I3},1}(\Gamma_{n})\cup\ca_s^{\rm{III1},1b}(\Gamma_{n}),\\
\cc_6&=\ca_s^{\rm{III1},1b'}(\Gamma_{n}),\\
\cc_7&=\ca_s^{\rm{I5}}(\Gamma_{n})\cup\ca_s^{\rm{III3},2a}(\Gamma_{n}),\\
\cc_8&=\ca_s^{\rm{III3},2b}(\Gamma_{n}),\\
\cc_{9}&=\ca_s^{\rm{III1},1a}(\Gamma_{n})\cup\ca_s^{\rm{III2},2a}(\Gamma_{n})\cup\ca_s^{\rm{I4}}(\Gamma_{n}),\\
\cc_{10}&=\ca_s^{\rm{III2},2b}(\Gamma_{n}),\\
\cc_{11}&=\ca_s^{\rm{III2},1}(\Gamma_{n}),\\
\cc_{12}&=\ca_s^{\rm{III3},1}(\Gamma_{n}),\\
\cc_{13}&=\ca_s^{\rm{III4},2a}(\Gamma_{n}),\\
\cc_{14}&=\ca_s^{\rm{I3},3}(\Gamma_{n})\cup\ca_s^{\rm{III1},2b'}(\Gamma_{n})\cup\ca_s^{\rm{III1},2a}(\Gamma_{n}).\\
\end{align*}
According to the equalities $(5.1)-(5.11)$, we have $\ca_s(\Gamma_n)=\cc_1\sqcup\cc_2\sqcup\cc_3\sqcup\cc_4\sqcup\cc_5\sqcup\cc_6\sqcup\cc_7\sqcup\cc_8\sqcup\cc_9\sqcup\cc_{10}\sqcup\cc_{11}\sqcup\cc_{12}\sqcup\cc_{13}\sqcup\cc_{14}$.
\begin{itemize}
  \item [(1)] $\cc_1=\ca_t(\Lambda_n)$, because $\ca_s^{\rm{I}1}(\Gamma_n)$, $\ca_s^{\rm{II2},2b}(\Gamma_n)$ and $\ca_s^{\rm{III4},1}(\Gamma_n)$ are subsets of $ \ca_s^{\rm{I}2}(\Gamma_n)=\ca_t(\Lambda_n)$;
  \item [(2)] 
  \begin{align*}
   \cc_2&=\ca_s^{\rm{I3},2}(\Gamma_n)\cup \ca_s^{\rm{II1},2}(\Gamma_n)\cup \ca_s^{\rm{II2},1b}(\Gamma_n)\cup \ca_s^{\rm{II2},2b'}(\Gamma_n)\cup \ca_s^{\rm{II3},2}(\Gamma_n)\cup \ca_s^{\rm{II4},2}(\Gamma_n)\cup \ca_s^{\rm{III1},2b}(\Gamma_n),\\
   &=\ca_{t}(\mathbb{D}_n)\cup \ca_{t}^{\mu}(\mathbb{D}_n),\\
  \end{align*}
  \item [(3)] By the proof in \cite[Section 5.3.4]{XieYangZhang25}, $\ca_t^2(A_n)\cup \ca_s^{\mu}(A_{n})=\ca_t^4(A_n)$.
  \begin{align*}
  \cc_3&=\ca_s^{\rm{I6}}(\Gamma_n)\cup\ca_s^{\rm{II1},1}(\Gamma_n)\cup\ca_s^{\rm{II2},1a}(\Gamma_{n})\cup\ca_s^{\rm{II2},2a}(\Gamma_{n})\cup\ca_s^{\rm{II3},1}(\Gamma_{n})
\cup\ca_s^{\rm{II4},1}(\Gamma_{n})\cup\ca_s^{\rm{III4},2b}(\Gamma_{n}),\\ 
&=\bigcup\limits_{m=2}^{n-4}(\ca_t^2(A_m)\times_s \ca_t(\Lambda_{n-m}))\cup (\ca_t^1(\Lambda_{n-1})\times_s \ca_t(A_1))\cup\bigcup\limits_{m=2}^{n-4}(\ca_s^{\mu}(A_{m})\times_s \ca_t(\Lambda_{n-m})),\\
&=\bigcup\limits_{m=2}^{n-4}((\ca_t^2(A_m)\cup \ca_s^{\mu}(A_{m})) \times_s \ca_t(\Lambda_{n-m}))\cup (\ca_t^1(\Lambda_{n-1})\times_s \ca_t(A_1)),\\
&=\bigcup\limits_{m=2}^{n-4}(\ca_t^4(A_m) \times_s \ca_t(\Lambda_{n-m}))\cup (\ca_t^1(\Lambda_{n-1})\times_s \ca_t(A_1)),\\
  \end{align*}

  \item [(4)] $\cc_4=\ca_s^{\rm{III4},2c}(\Gamma_{n})= \bigcup\limits_{m=2}^{n-4}(\ca_s^{\mu}(\mathbb{A}_m) \times_s \ca_t(\Lambda_{n-m}))$,
  \item [(5)] 
   \begin{align*} 
  \cc_5&=\ca_s^{\rm{I3},1}(\Gamma_{n})\cup\ca_s^{\rm{III1},1b}(\Gamma_{n}),\\
  &=(\ca_t^3(A_{n-1})\cap \ca_t^2(A_{n-1}))\times_s\ca_t(A_{1})\cup (\ca_s^{\mu}(A_{n-1})\times_s \ca_t(A_{1})),\\
  &=((\ca_t^3(A_{n-1})\cup \ca_s^{\mu}(A_{n-1}))\cap (\ca_t^2(A_{n-1})\cup \ca_s^{\mu}(A_{n-1})))\times_s \ca_t(A_{1}),\\
  &=((\ca_t^3(A_{n-1})\cup \ca_s^{\mu}(A_{n-1}))\cap \ca_t^4(A_{n-1}))\times_s \ca_t(A_{1}),\\
  \end{align*}
  \item [(6)] $\cc_6=\ca_s^{\rm{III1},1b'}(\Gamma_{n})=\ca_s^{\mu,2}(\mathbb{A}_{n-1})\times_s \ca_t(A_{1})$,
  \item [(7)] 
  \begin{align*} 
  \cc_7&=\ca_s^{\rm{I5}}(\Gamma_{n})\cup\ca_s^{\rm{III3},2a}(\Gamma_{n}),\\
  &=(\ca_t(B_{3})\times_s \ca_t^2(A_{n-3}))\cup (\ca_s^{\mu}(A_{n-3})\times_s \ca_t(B_{3})),\\
  &=(\ca_t^2(A_{n-3})\cup \ca_s^{\mu}(A_{n-3})))\times_s \ca_t(B_{3}),\\
  &=\ca_t^4(A_{n-3})\times_s \ca_t(B_{3}),\\
  \end{align*}
  \item [(8)] $\cc_8=\ca_s^{\rm{III3},2b}(\Gamma_{n})=\ca_s^{\mu}(\mathbb{A}_{n-3})\times_s \ca_t(B_{3})$,
  \item [(9)] 
  \begin{align*} 
  \cc_{9}&=\ca_s^{\rm{III1},1a}(\Gamma_{n})\cup\ca_s^{\rm{III2},2a}(\Gamma_{n})\cup \ca_s^{\rm{I4}}(\Gamma_{n}),\\
  &=((\ca_t^3(A_{n-2})\cap \ca_t^1(A_{n-2}))\times_s \ca_t(A_{1})\times_s \ca_t(A_{1}))\cup ((\ca_t^2(A_{n-2})\cup \ca_s^{\mu}(A_{n-2}))\times_s \ca_t(A_{1})\times_s \ca_t(A_{1})),\\
  &=((\ca_t^3(A_{n-2})\cap \ca_t^1(A_{n-2}))\cup\ca_t^4(A_{n-2}))\times_s \ca_t(A_{1})\times_s \ca_t(A_{1}),\\
  \end{align*}
  \item [(10)] $\cc_{10}=\ca_s^{\rm{III2},2b}(\Gamma_{n})=\ca_s^{\mu}(\mathbb{A}_{n-2})\times_s \ca_t(A_{1})\times_s \ca_t(A_{1})$,
  \item [(11)] $\cc_{11}=\ca_s^{\rm{III2},1}(\Gamma_{n})=\ca_t^{1}(A_{n-3})\times_s \ca_t(A_{1})\times_s \ca_t(A_{1})\times_s \ca_t(A_{1})$,
  \item [(12)] $\cc_{12}=\ca_s^{\rm{III3},1}(\Gamma_{n})=\ca_t^{1}(A_{n-4})\times_s \ca_t(A_{1})\times_s \ca_t(B_{3})$,
  \item [(13)] $\cc_{13}=\ca_s^{\rm{III4},2a}(\Gamma_{n})=\bigcup\limits_{m=4}^{n-1}\ca_t^{1}(A_{n-m-1})\times_s \ca_t(A_{1})\times_s \ca_t(\Lambda_{m})$,
  \item [(14)] $\cc_{14}=\ca_s^{\rm{I3},3}(\Gamma_{n})\cup\ca_s^{\rm{III1},2b'}(\Gamma_{n})\cup\ca_s^{\rm{III1},2a}(\Gamma_{n})=\ca_{ss}(\Gamma_{n})$.
\end{itemize}
This completes the proof.

\subsubsection{The number of silted algebras of type $\Gamma_{n}$} In this subsection, we count the number of silted algebras of type $\Gamma_{n}$. Let $a_{s}^{\mu}(\mathbb{A}_{n})$ be the cardinality of $\ca_s^{\mu}(\mathbb{A}_{n})$. Then,  by \cite[Theorem 5.2(2)]{XieYangZhang25}, we have 
\begin{align}
a_{s}^{\mu}(\mathbb{A}_{n})=\begin{cases} 0 & \text{ if } n\leq 4,\\
2 & \text{ if } n=5,\\
t(A_{n-1})-t(A_{n-2})-2^{n-2}-2^{n-4}+\frac{n-2}{2} & \text{ if $n\geq 6$ is even},\\
t(A_{n-1})-t(A_{n-2})-2^{n-2}-2^{n-4}+2^{\frac{n-3}{2}-1}+\frac{n-3}{2} & \text{ if $n\geq 7$ is odd}.
\end{cases}
\end{align}
Let $c_{i}$ be the cardinalities of $\cc_i$ for any $1\leq i\leq 14$. By Theorem \ref{thm:classification-of-silted-algebras-of-mutation-type-D}, we have 

\begin{proposition}\label{prop:number-of-silted-algebras}
$a_s(\Gamma_n)=\sum\limits_{i=1}^{14}c_{i}$, where
\begin{itemize}
  \item [(1)] $c_{1}=a_t(\Lambda_n)$, which is given in Proposition \ref{prop:tilted-algebras-of-type-D};
  \item [(2)] 
 $c_{2}=\begin{cases} 1 & \text{ if } n= 4,\\
\sum\limits_{i=4}^{n-3}(t(A_{n{-}i{-}1})-t(A_{n{-}i{-}2}))\times (t^{1}(\Lambda_{i})+t(A_{i{-}2}))\\ + 2t(A_{n{-}2})+2t(A_{n{-}3})+2t(A_{n{-}4})-3t(A_{n{-}5})& \text{ if } n\geq 5,\\
\end{cases}$
  \item [(3)] $c_3=\sum\limits_{m=2}^{n-4}c_{3,m}+a_{t}^{1}(\Lambda_{n-1})$, where
  \begin{center}
    $c_{3,m}=\begin{cases} a_t(\Lambda_{n{-}2}) & \text{ if } m= 2,\\
(a_{t}(A_m)-t(A_{m{-}1})+2)\times a_t(\Lambda_{n{-}m})& \text{ if } m\geq 3,\\
\end{cases}$
  \end{center}
  \item [(4)] $c_4=\sum\limits_{m=2}^{n-4}a_{s}^{\mu}(\mathbb{A}_{m}) \times a_t(\Lambda_{n{-}m})$, where $a_{s}^{\mu}(\mathbb{A}_{m})$ can be obtained by (5.12);
  \item [(5)]
  $c_{5}=\begin{cases} 1 & \text{ if } n=4\\
t(A_{n-2})-t(A_{n-4})-n+4 & \text{ if } n\geq5,\\
\end{cases}$
  \item [(6)] $c_6=a_{s}^{\mu}(\mathbb{A}_{n-2})$, where $a_{s}^{\mu}(\mathbb{A}_{n-2})$ can be obtained by (5.12);
  \item [(7)] 
  $c_{7}=\begin{cases} 0 & \text{ if } n=4\\
  3 & \text{ if } n=5,\\
9& \text{ if } n=6,\\
3a_t^{4}(A_{n-3}) & \text{ if } n\geq 7,\\
\end{cases}$
  \item [(8)] $c_8=3a_{s}^{\mu}(\mathbb{A}_{n-3})$, where $a_{t}(B_3)=3$ and $a_{s}^{\mu}(\mathbb{A}_{n-3})$ can be obtained by (5.12);
    
  \item [(9)] 
  $c_{9}=\begin{cases} 1 & \text{ if } n=4,\\
4& \text{ if } n=5,\\
a_t(A_{n-2})-t(A_{n-3})+t(A_{n-4}) & \text{ if } n\geq 6,\\
\end{cases}$
  \item [(10)] $c_{10}=a_{s}^{\mu}(\mathbb{A}_{n-2})$, where $a_{s}^{\mu}(\mathbb{A}_{n-2})$ can be obtained by (5.12);
    
  \item [(11)] $c_{11}=t(A_{n-4})$;
  \item [(12)] $c_{12}=3t(A_{n-5})$;
  \item [(13)] $c_{13}=\sum\limits_{m=4}^{n-1}t(A_{n-m-2}) \times a_t(\Lambda_{m})$
  \item [(14)] $c_{14}=a_{nht}(A_{n-1})-2a_{nht}(A_{n-2})-t(A_{n-3})$.
\end{itemize}
\end{proposition}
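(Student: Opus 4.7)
The plan is to build directly on the disjoint decomposition
\[
\ca_s(\Gamma_n) = \cc_1 \sqcup \cc_2 \sqcup \cdots \sqcup \cc_{14}
\]
provided by Theorem~\ref{thm:classification-of-silted-algebras-of-mutation-type-D}: once pairwise disjointness is in hand (which the classification already asserts), the identity $a_s(\Gamma_n) = \sum_{i=1}^{14} c_i$ is automatic, and the task reduces to evaluating each $c_i$ from its set-theoretic description. I would treat the items in three tiers of increasing difficulty.

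The first tier consists of $c_1$ (which is immediate from Proposition~\ref{prop:tilted-algebras-of-type-D}) together with $c_4$, $c_6$, $c_8$, $c_{10}$, $c_{11}$, $c_{12}$ and $c_{13}$. Each of these sets is a symmetric product of pieces whose underlying Dynkin types are distinct, so Lemma~\ref{lem:symmetric-product}(b) converts it into an ordinary product; the individual factors are then counted via formula (5.12) for $a_s^{\mu}(\mathbb{A}_m)$, via Corollary~\ref{cor:number-of-tilting-contains-P(n)} (giving $|\ca_t^1(A_k)| = t(A_{k-1})$), via Proposition~\ref{prop:tilted-algebras-of-type-D}, and via the easily-checked $|\ca_t(B_3)| = 3$. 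The second tier is $c_3$, $c_5$, $c_7$, $c_9$, where the same tactic works but one additionally invokes the identity $\ca_t^2(A_m) \cup \ca_s^{\mu}(A_m) = \ca_t^4(A_m)$ established in \cite[Section~5.3.4]{XieYangZhang25} together with $a_t^4(A_m) = a_t(A_m) - t(A_{m-1}) + 2$ from Remark~\ref{rem:some-subsets}; the exceptional small-$n$ values (for example $m=2$ inside $c_3$, or $n = 4, 5, 6$ inside $c_7$ and $c_9$) will be verified by direct inspection of the AR-quiver of $\mod \Lambda_n$ and the tables in \cite[Section~3]{Xing21}.

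The main obstacle is $c_2$. Here one must collect the contributions of the seven subcases (I3.2), (II1.2), (II2.1b), (II2.2b'), (II3.2), (II4.2) and (III1.2b) that together build $\ca_t(\mathbb{D}_n) \cup \ca_t^{\mu}(\mathbb{D}_n)$. I plan to go through the subcases in turn, each time identifying the underlying tilting module (over $\Lambda_{n-1}$ or over a suitable subquiver) and testing whether $P(n{-}1)$, $I(1)$ or $I(2)$ appears as a direct summand; the count in each subcase then follows from Corollary~\ref{cor:number-of-tilting-contains-I(1)}, Corollary~\ref{cor:number-of-tilting-contains-P(n)} and Proposition~\ref{prop:the-tilting-modules-of-A-not-contain-injective-modules}. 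The combinatorial sum displayed in the statement collects these seven contributions, with the compensating $-3 t(A_{n-5})$ term encoding an overlap among the last few subcases which will require the most careful bookkeeping; the tiny case $n = 4$ is handled by hand and yields $c_2 = 1$.

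Finally, for $c_{14} = a_{ss}(\Gamma_n)$ I would transport Theorem~\ref{thm:classification-of-strictly-shod-algebras} along the BGP reflection $F \colon K^b(\proj \Gamma_n) \xrightarrow{\simeq} K^b(\proj \Lambda_n)$ and then repeat the argument of Proposition~\ref{prop:the-number-of-strictly-shod-algebras}: this produces $a_{nht}(A_{n-1}) - 2 a_{nht}(A_{n-2})$, from which $t(A_{n-3})$ must be subtracted to account for the strictly shod silted algebras appearing twice — once inside (I3.3) and once inside (III1.2a)/(III1.2b') — after the operator $(-)^*$ is applied. The genuine difficulty of the whole proposition therefore concentrates in the $c_2$ analysis and in tracking the $(-)^*$-induced overlaps for $c_{14}$; the rest is organisation plus Lemma~\ref{lem:symmetric-product}.
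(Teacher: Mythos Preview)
Your overall strategy matches the paper's: the paper also relies on the disjointness from Theorem~\ref{thm:classification-of-silted-algebras-of-mutation-type-D} and then explicitly verifies only the harder items (2), (5), (7), (9), declaring the rest ``easily obtained''. Your tiered organisation of the easy cases via Lemma~\ref{lem:symmetric-product} and the cited identities is fine, and your plan for $c_2$ (running through the seven subcases and counting with Corollaries~\ref{cor:number-of-tilting-contains-I(1)} and~\ref{cor:number-of-tilting-contains-P(n)}) is exactly what the paper does.

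There are, however, two genuine gaps. For $c_5$ you treat the $-n+4$ term as if it were a small-$n$ anomaly to be checked by hand, but it is not: it appears in the general formula for every $n\geq 5$. The paper obtains it by exhibiting, for each $n$, an explicit family of pairs of tilting modules over $A_{n-1}$ (drawn as two concrete quivers with relations) whose endomorphism algebras coincide---one pair inside $\ca_s^{\rm I3,1}(\Gamma_n)$ and $n-5$ further pairs witnessing overlaps between $\ca_s^{\rm I3,1}(\Gamma_n)$ and $\ca_s^{\rm III1,1b}(\Gamma_n)$. Your appeal to the identity $\ca_t^2(A_m)\cup\ca_s^{\mu}(A_m)=\ca_t^4(A_m)$ does not produce this, because $\cc_5$ involves $\ca_t^3(A_{n-1})$ rather than $\ca_t^2(A_{n-1})$, and the intersection $(\ca_t^3\cup\ca_s^{\mu})\cap\ca_t^4$ cannot be read off from that identity alone. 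A similar explicit overlap analysis is needed for $c_9$.

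For $c_{14}$ your explanation of the subtraction $-t(A_{n-3})$ is incorrect. It is \emph{not} a double-count between (I3.3) and (III1.2a)/(III1.2b$'$): those families are parametrised by disjoint sets of $T_1$'s (one without $P(n)$, the other with). The actual reason is that in Subcase~(III1.2b) the boundary case $i=n$ forces $T'=P(n)$, and after replacing $P(n)$ by $X$ the resulting $\End(S)$ becomes a \emph{tilted} algebra of type $\mathbb{D}_n$ (landing in $\cc_2$) rather than a strictly shod algebra. These $t(A_{n-3})$ configurations---one for each tilting $T''$ in the wing of $I(n-2)$---are therefore removed from the $\Lambda_n$ count $a_{ss}(\Lambda_n)=a_{nht}(A_{n-1})-2a_{nht}(A_{n-2})$, not subtracted to correct an overlap.
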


\begin{proof} We only prove (2), (5), (7) and (9), and the others can be easily obtained.

(2) According to the Subcase (I3.2), $\mid \ca_s^{\rm{I3},2}(\Gamma_n)\mid=t(A_{n-2})$. Moreover, by Corollary \ref{cor:number-of-tilting-contains-P(n)}, we have 
\begin{itemize}
  \item [--] $\mid\ca_s^{\rm{II1},2}(\Gamma_n)\mid=t(A_{n-3})-t(A_{n-4})$,
  \item [--]   
 $\mid\ca_s^{\rm{II2},1b}(\Gamma_n) \mid=\begin{cases} 0 & \text{ if } n=4,5,\\
t(A_{n-4})-t(A_{n-5}) & \text{ if } n\geq6,\\
\end{cases}$
  \item [--] 
 $\mid\ca_s^{\rm{II2},2b'}(\Gamma_n)\mid=\begin{cases} 0 & \text{ if } n=4,5,6,\\
\sum\limits_{i=4}^{n-3}(t(A_{n-i-1})-t(A_{n-i-2}))\times t^{1}(\Lambda_i) & \text{ if } n\geq7,\\
\end{cases}$

\end{itemize}
In the Subcase (II3.2), by Corollaries \ref{cor:number-of-tilting-contains-I(1)} and \ref{cor:number-of-tilting-contains-P(n)}, we have
\begin{align*}
 \mid \ca_s^{\rm{II3},2}(\Gamma_n)\mid=\begin{cases} 3 & \text{ if } n=5,\\
\sum\limits_{i=4}^{n-2}(t(A_{n-i})-t(A_{n-i-1}))\times (t(A_{i-2})-t(A_{i-3}))+t(A_{n-2})-t(A_{n-3}) & \text{ if } n\geq6,\\
\end{cases}
\end{align*}
where $t(A_{i-2})-t(A_{i-3})$ is the number of tilting modules over $A_{i-2}$ which do not contain $I(1)$ as a direct summand. At last, it is easy to see that 
 \begin{align*}
 \mid\ca_s^{\rm{II4},2}(\Gamma_n)\mid=\begin{cases} 2 & \text{ if } n=5,\\
\sum\limits_{i=4}^{n-2}(t(A_{n-i})-t(A_{n-i-1}))\times t(A_{i-3})+t(A_{n-3}) & \text{ if } n\geq6,\\
\end{cases}
\end{align*}
and 
$\mid\ca_s^{\rm{III1},2b}(\Gamma_n)\mid=t(A_{n-3})$. Then, as a consequence, we obtain $c_2$.

(5) According to \cite[Example 3.11]{Xing21}, $c_{5}=1$ for $n=4$. By Corollary \ref{cor:number-of-tilting-contains-P(n)}, the number of tilting modules over $A_{n-1}$ that contain $P(3)$ as a direct summand but do not contain $P(n)$ as a direct summand is $t(A_{n-2})-t(A_{n-3})$. Among these tilting modules, the endomorphism algebras with only the following two elements are isomorphic.
$$\begin{xy}
0;<3pt,0pt>:<0pt,3pt>::
(16,16)*+{\circ}="1",
(8,8)*+{\cdots}="2",
(0,0)*+{\circ}="3",
(-8,-8)*+{\circ}="4",
%(8,-8)*+{\circ}="5",
(0,-16)*+{\circ}="6",
%(8,-24)*+{}="7",
%(-5,-5)*+{ }="9",
%(5,-5)*+{ }="10",
%\ar@/_0.6pc/@{.}"9";"10",
\ar"4";"3", \ar"3";"2", \ar"2";"1", \ar"4";"6", %\ar"6";"7",
\end{xy}
\hspace{2cm}
\begin{xy}
0;<3pt,0pt>:<0pt,3pt>::
%(8,16)*+{}="1",
(8,16)*+{\circ}="2",
(0,8)*+{\circ}="3",
%(8,0)*+{\circ}="4",
(8,0)*+{\circ}="5",
(16,-8)*+{\cdots}="6",
(24,-16)*+{\circ}="7",
\ar"3";"2", \ar"3";"5", \ar"5";"6", \ar"6";"7",
\end{xy}$$
Thus, $\mid \ca_{s}^{{\rm I3},1(\Gamma_n)}\mid=t(A_{n-2})-t(A_n-3)-1$. On the other hand, the number of tilting modules over $A_{n-1}$ that contain $P(3)$ and $P(n)$ as direct summands but do not contain $P(n-1)$ as a direct summand is $t(A_{n-3})-t(A_{n-4})$. In this case, the following elements have isomorphic endomorphism algebras.

$$\begin{xy}
0;<3pt,0pt>:<0pt,3pt>::
(8,16)*+{\circ}="1",
(0,8)*+{\cdots}="2",
(-8,0)*+{\circ}="3",
(8,0)*+{X}="4",
(0,-8)*+{\circ}="5",
(8,-16)*+{\cdots}="6",
(16,-24)*+{\circ}="7",
(-5,-5)*+{ }="9",
(5,-5)*+{ }="10",
\ar@/^0.6pc/@{.}"9";"10",
\ar"5";"4", \ar"3";"5", \ar"3";"2", \ar"2";"1", \ar"5";"6", \ar"6";"7",
\end{xy}
\hspace{2cm}
\begin{xy}
0;<3pt,0pt>:<0pt,3pt>::
(16,16)*+{\circ}="1",
(8,8)*+{\cdots}="2",
(0,0)*+{\circ}="3",
(-8,-8)*+{\circ}="4",
(8,-8)*+{\circ}="5",
(0,-16)*+{\cdots}="6",
(8,-24)*+{\circ}="7",
(-5,-5)*+{ }="9",
(5,-5)*+{ }="10",
\ar@/_0.6pc/@{.}"9";"10",
\ar"4";"3", \ar"3";"5", \ar"3";"2", \ar"2";"1", \ar"4";"6", \ar"6";"7",
\end{xy}$$
The tilting modules of the right form  do not contain $P(n)$ as a direct summand. 
Hence, we obtain that there are $n-5$ isomorphic endomorphism algebras. So, $c_5=t(A_{n-2})-t(A_n-3)-1+(t(A_{n-3})-t(A_{n-4})-n+5)=t(A_{n-2})-t(A_{n-4})-n+4$.

(7) Since $\ca_t(B_3)$ is a subset of $\ca_t^{4}(A_3)$, for $n=6$, by Lemma \ref{lem:symmetric-product}, we have 
$$c_{7}=a_t^{4}(A_3)a_t(B_3)-\frac{a_t(B_3)\times(a_t(B_3)-1)}{2}=9.$$ Thus, 
$$c_{7}=\begin{cases} 0 & \text{ if } n=4\\
  3 & \text{ if } n=5,\\
a_t^{4}(A_3)a_t(B_3)-\frac{a_t(B_3)\times(a_t(B_3)-1)}{2}& \text{ if } n=6,\\
a_t^{4}(A_{n-3})a_t(B_3) & \text{ if } n\geq 7.\\
\end{cases}$$

(9) Recall that $\ca_s^{\rm{III2},2a}(\Gamma_{n})\cup \ca_s^{\rm{I4}}(\Gamma_{n})=\ca_s^{\rm{I4}}(\Gamma_{n})\cup \End(P(3)\oplus \cdots \oplus P(n-2)\oplus P(n)\oplus \tau^{-2}P(n))$. We have the following three cases:
\begin{itemize}
  \item [--] For $n=4$, $\mid \ca_s^{\rm{III2},2a}(\Gamma_{n})\cup \ca_s^{\rm{I4}}(\Gamma_{n})\mid=0$ and $\mid \ca_s^{\rm{III1},1a}(\Gamma_{n})\mid=1$;

  \item [--] For $n=5$, since the endomorphism algebras of all projective modules and all injective modules over $A_{n}$ are isomorphic, 
  $$\mid \ca_s^{\rm{III2},2a}(\Gamma_{n})\cup \ca_s^{\rm{I4}}(\Gamma_{n})\mid=a_t(A_{3})-t(A_{2})+1+1=4.$$ 
  Moreover, $$\ca_s^{\rm{III1},1a}(\Gamma_{n})=\End(P(2)\oplus P(3)\oplus P(4))\oplus \End(X)\oplus \End(P(1)[1])).$$ This shows that $\ca_s^{\rm{III1},1a}(\Gamma_{n})$ is a subset of $\mid \ca_s^{\rm{III2},2a}(\Gamma_{n})\cup \ca_s^{\rm{I4}}(\Gamma_{n})$.
  \item [--] For $n\geq 6$, we obtain that $\mid \ca_s^{\rm{III2},2a}(\Gamma_{n})\cup \ca_s^{\rm{I4}}(\Gamma_{n})\mid=a_t(A_{n-2})-t(A_{n-3})+2$ and  $\mid\ca_s^{\rm{III1},1a}(\Gamma_{n})\mid=t(A_{n-4})$. Moreover, 
      $$\End(P(3) \oplus \cdots \oplus P(n))\oplus \End(P(1)[1])\oplus \End(P(2)[1])$$ and 
      $$\End(P(3)\oplus \cdots \oplus P(n-2)\oplus P(n)\oplus \tau^{-2}P(n)) \oplus \End(P(1)[1])\oplus \End(P(2)[1])$$ belong to $\ca_s^{\rm{III1},1a}(\Gamma_{n})$. So $c_{9}=a_t(A_{n-2})-t(A_{n-3})+t(A_{n-4})$.
\end{itemize}

\end{proof}

\begin{example}
\begin{itemize}
  \item [(1)] For $n=4$, $c_1=7$, $c_2=1$,$c_5=1$,$c_9=1$, $c_{11}=1$ and the other $c_{i}=0$. Thus, $a_s(\Gamma_4)=11$; See \cite[Example 3.11]{Xing21}
  \item [(2)] For $n=5$, $c_1=35$, $c_2=13$,$c_3=4$,$c_5=3$, $c_7=3$, $c_9=4$, $c_{11}=1$, $c_{12}=3$,$c_{14}=2$ and the other $c_{i}=0$. It should be noted that, $\cc_{12}$ is a subset of $\cc_{9}$. Thus, $a_s(\Gamma_5)=65$;
  \item [(3)] For $n=6$, $c_1=126$, $c_2=39$,$c_3=22$,$c_5=10$, $c_7=9$, $c_9=7$, $c_{11}=2$, $c_{12}=3$,$c_{13}=7$,$c_{14}=9$ and the other $c_{i}=0$.Thus, $a_s(\Gamma_6)=234$.
\end{itemize}
\end{example}

\begin{remark}
By Proposition \ref{prop:number-of-silted-algebras}, we conclude that the number of strictly shod algebras of type $\Gamma_n$ is given by $a_{nht}(A_{n-1})-2a_{nht}(A_{n-2})-t(A_{n-3})$. Moreover, we have  that strictly shod algebras of type $\Gamma_n$ fall into two classes:
\begin{itemize}
  \item [(a)] type $\Lambda_n$;  
  \item [(b)] type $\Lambda_{n-1}\times \Lambda_{1}$.
\end{itemize}
The number of such algebras in class $(a)$ is $\sum\limits_{m=2}^{n-3}t(A_m)\times t(A_{n-m-2})$, while the number in class $(b)$ is $\sum\limits_{m=1}^{n-5}t(A_m)\times t(A_{n-m-4})$.
\end{remark}

\begin{remark}
Let $n=7$. Then $c_{10}=2$. The two elements are 
\begin{align*}
 \begin{xy}
(50,0) *+{\circ}="52",
(40,0) *+{\circ}="53",
(10,0) *+{\circ}="31",
(29,2) *+{\circ}="33",
(20,10) *+{\circ}="32",
(29,-2) *+{\circ}="36",
(16,6)*+{ }="34",
(24,6)*+{ }="35",
(20,-10)*+{\circ}="41",
(16,-6)*+{ }="44",
(24,-6)*+{ }="45",
\ar"31";"32", \ar"32";"33", \ar@/^0.6pc/@{.}"35";"34",
\ar"31";"41", \ar@/_0.6pc/@{.}"45";"44",
\ar"41";"36",
\end{xy}\hspace{15pt},\hspace{15pt}
\begin{xy}
(40,10) *+{\circ}="31",
(30,0) *+{\circ}="33",
(20,10) *+{\circ}="32",
(50,0) *+{\circ}="52",
(60,0) *+{\circ}="53",
(26,4)*+{ }="34",
(34,4)*+{ }="35",
(20,-10)*+{\circ}="41",
(26,-4)*+{ }="44",
(34,-4)*+{ }="45",
(40,-10)*+{\circ}="61",
\ar"33";"31", \ar"32";"33", \ar@/_0.6pc/@{.}"35";"34",
\ar@/^0.6pc/@{.}"45";"44",
\ar"33";"61", \ar"41";"33",
\end{xy}
\end{align*}
Note that these two non-hereditary connected subquivers are tilted algebras of $\mathbb{A}_{n-2}$ but not tilted algebras of type $A_{n-2}$.
\end{remark}

By Theorem \ref{thm: strictly-shod-algebras-are-string-algebras}, we have the following corollary.

\begin{corollary}
The strictly shod algebras of type $\Gamma_{n}$ are string algebras.
\end{corollary}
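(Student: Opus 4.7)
The plan is to use the decomposition
\[
\cc_{14}=\ca_s^{\rm{I3},3}(\Gamma_n)\cup\ca_s^{\rm{III1},2a}(\Gamma_n)\cup\ca_s^{\rm{III1},2b'}(\Gamma_n)
\]
of $\ca_{ss}(\Gamma_n)$ already recorded in the proof of Theorem~\ref{thm:classification-of-silted-algebras-of-mutation-type-D}, and to treat each piece in turn, reducing as much as possible to Theorem~\ref{thm: strictly-shod-algebras-are-string-algebras}.

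The first piece is immediate: by construction $\ca_s^{\rm{I3},3}(\Gamma_n)=\ca_{ss}'(\Lambda_n)\subseteq\ca_{ss}(\Lambda_n)$ consists of strictly shod algebras of type $\Lambda_n$, so its members are string algebras by Theorem~\ref{thm: strictly-shod-algebras-are-string-algebras}. The second piece is almost as easy: for $B\in\ca_s^{\rm{III1},2a}(\Gamma_n)$ the Subcase~(III1.2a) analysis already gives $B\cong\End(P(1)[1]\oplus T_1/P(n))\times\End(X)$. The first factor sits in $\ca_{ss}(\Lambda_{n-1})$ and has precisely the shape of Theorem~\ref{thm:classification-of-strictly-shod-algebras} (the hypothesis that $T'$ contains $P(n-1)$ confines the configuration to the subquiver on vertices $1,\ldots,n{-}1$), while the second factor is $k$. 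Since the quiver with relations of a product algebra is the disjoint union of the factor quivers with relations, the axioms (S1)--(S3) descend componentwise, so Theorem~\ref{thm: strictly-shod-algebras-are-string-algebras} applies.

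The main obstacle is the remaining set $\ca_s^{\rm{III1},2b'}(\Gamma_n)$, where $\End(S)=\End(P(1)[1]\oplus T_1/P(n)\oplus X)$ is genuinely connected through $X$: $T'$ does \emph{not} contain $P(n-1)$, so $\Hom(T_1,X)\neq 0$, and $i<n$ forces the global dimension to equal $3$. Here I would adapt the arguments of Theorems~\ref{thm:classification-of-strictly-shod-algebras} and~\ref{thm: strictly-shod-algebras-are-string-algebras}, with $X$ playing the role previously played by $\tau^{-(m+1)}P(1)$ (or $\tau^{-(m+1)}P(2)$) in the $\Lambda_n$ analysis. Under the identification of $K^{[-1,0]}(\proj\Gamma_n)$ as a full subcategory of $K^b(\proj\Lambda_n)$ recorded in Figure~4, these choices are compatible, so the unique effective intersection subquiver of four vertices and three arrows $\beta,\alpha,\delta$ with $\alpha\beta=\delta\alpha=0$ reappears, now anchored at $X$. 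The components attached at the two endpoints of this subquiver are tilted algebras of smaller type $A$ exhibiting the same non-hereditary vs.\ hereditary dichotomy as in the proof of Theorem~\ref{thm: strictly-shod-algebras-are-string-algebras}. The only genuinely new verification of (S1)--(S3) occurs at $X$ and its immediate neighbours; reading off Figure~4, $X$ has at most two incoming and at most two outgoing arrows in the endomorphism quiver, the vanishing and non-vanishing of the relevant $\Hom$-spaces produce exactly one non-zero continuation on each side as required by (S2), and all relations are zero-relations of length $\geq 2$, as required by (S3). This closes the final piece and hence the corollary.
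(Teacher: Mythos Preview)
Your decomposition of $\cc_{14}$ and your handling of the first two pieces match the paper's implicit reasoning exactly. The difference lies in the third piece $\ca_s^{\rm{III1},2b'}(\Gamma_n)$. The paper does not re-verify (S1)--(S3) at $X$; instead, in the Remark immediately preceding the corollary it records that every strictly shod algebra of type $\Gamma_n$ is already isomorphic either to a strictly shod algebra of type $\Lambda_n$ or to one of type $\Lambda_{n-1}\times A_1$. Concretely, when $T'$ does not contain $P(n-1)$ and $i<n$, the object $X$ plays in $\End(S)$ precisely the structural role that some indecomposable in the wing of $P(2)$ plays in a $\Lambda_n$-picture, so $\End(P(1)[1]\oplus(T_1/P(n))\oplus X)$ lands in $\ca_{ss}(\Lambda_n)$ as an abstract algebra. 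Once that identification is made, Theorem~\ref{thm: strictly-shod-algebras-are-string-algebras} applies verbatim and no local analysis near $X$ is needed.

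Your route---redoing the string-axiom check directly at $X$---is a legitimate alternative and has the virtue of being self-contained, but the sketch you give for (S1)--(S2) at $X$ (``reading off Figure~4, $X$ has at most two incoming and at most two outgoing arrows\ldots'') would need to be made precise: you should actually name the irreducible maps in $\End(S)$ adjacent to $X$ and check the continuation conditions, since Figure~4 depicts the AR-quiver of $K^{[-1,0]}(\proj\Gamma_n)$, not the Gabriel quiver of $\End(S)$. The paper's approach sidesteps this by absorbing the case into the already-proved $\Lambda_n$ classification.
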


Since the number of strictly shod algebras decreases from $\Lambda_n$ to $\Gamma_n$, we therefore propose the following question.

\begin{question}
Are all strictly shod algebras of Dynkin quivers of type $\mathbb{D}_n$ with arbitrary orientations string algebras?
\end{question}

\section{The realization functor is not an equivalence}
\label{s:the-realization-functor-is-not-an-equivalence}
In this section, based on the classification of the silted algebras of type $\Lambda_{n}$ and $\Gamma_{n}$, we examine the realization functor induced by the t-structure. We begin by recalling the definition of a t-structure.

Let $\mathcal{T}$ be a triangulated category. A $\emph t$-$\emph structure$ on $\mathcal{T}$ is a pair $(\mathcal{T}^{\leq 0},\mathcal{T}^{\geq 0})$ of strict (that is, closed under isomorphisms) and full subcategories of $\mathcal{T}$, satisfying the following conditions for $\mathcal{T}^{\geq i}:=\mathcal{T}^{\geq 0}[-i]$, $\mathcal{T}^{\leq i}:=\mathcal{T}^{\leq 0}[-i]$
\begin{itemize}
\item[(1)] $\mathcal{T}^{\leq 0}\subseteq \mathcal{T}^{\leq 1}$ and $\mathcal{T}^{\geq 1}\subseteq \mathcal{T}^{\geq 0}$;
\item[(2)] $\Hom_{\mathcal{T}}(X,Y)=0$ for $X\in \mathcal{T}^{\leq 0}$ and $Y\in \mathcal{T}^{\geq 1}$;
\item[(3)] For any object $Z$ of $\ct$, there is a triangle $X\to Z\to Y\to X[1]$ with $X\in\mathcal{T}^{\leq 0}$ and $Y\in\mathcal{T}^{\geq 1}$.
\end{itemize}
A $t$-structure $(\mathcal{T}^{\leq 0},\mathcal{T}^{\geq 0})$ is said to be \emph{bounded} if
\[
\bigcup_{i\in \mathbb{Z}}\mathcal{T}^{\leq i}=\mathcal{T}=\bigcup_{i\in \mathbb{Z}}\mathcal{T}^{\geq i}.
\]
The category $\ca=\mathcal{T}^{\leq 0}\cap \mathcal{T}^{\geq 0}$ is referred to as the \emph{heart} of the $t$-structure $(\mathcal{T}^{\leq 0},\mathcal{T}^{\geq 0})$ and is an abelian category due to \cite[Th\'eor\`eme 1.3.6]{BeilinsonBernsteinDeligne82}.

Let $A$ and $B$ be finite-dimensional algebras such that the category $\mod B$ of finite-dimensional
$B$-modules forms the heart of a bounded t-structure on the bounded derived category $\mathcal{D}^{b}(\mod A)$ of $\mod A$. Then the embedding functor $\mod B\hookrightarrow \mathcal{D}^{b}(\mod A)$ can be extended to a realization functor $\mathcal{D}^{b}(\mod B)\to\mathcal{D}^{b}(\mod A)$. Recently, Martin Kalck observed an interesting phenomenon: there exists examples where $A$ and $B=\End(M)$ are derived equivalent (with $M$ a silting object), but the embedding $\mod B \hookrightarrow\mathcal{D}^{b}(\mod A)$ induced by the t-structure does not extend to a derived equivalence. This phenomenon was further studied by Yang in \cite{Yang20}, who provided concrete instances of such behavior.

The equivalence of the realization functor has been widely studied. Examples include: for the module category of finite dimensional modules over a finite-dimensional hereditary algebra, Stanley and van Roosmalen \cite{StanleyRoosmalen16} proved that the realization functor is an equivalence if and only if the t-structure is bounded and the aisle of the t-structure is closed under the Serre functor; Psaroudakis and Vit\'{o}ria \cite{PsaroudakisVitoria18} developed a non-compact tilting theory, in which non-compact objects have endomorphism rings that are not derived equivalent to the original ring. Thus, they consider the hearts of the naturally associated t-structures instead of endomorphism rings, in which case the corresponding realization functors yield derived equivalences; Moreover, Chen, Han and Zhou \cite{ChenHanZhou19} proved that the realization functor with respect to the HRS tilt is an equivalence if and only if the corresponding class in the third Yoneda extension group vanishes.

Next, we investigate this phenomenon for Dynkin quivers of type $\mathbb{D}_{n}~(n\geq 5)$ and develop a method for constructing examples exhibiting this behavior.

Let $Q$ be a finite quiver with vertex set $Q_{0}$ and arrow set $Q_{1}$. Following \cite[Section 1.1]{BautistaLiu17}, $Q$ is said to be \emph{gradable} if every closed walk has virtual degree $0$. If $Q$ is gradable, then $Q$ has no oriented cycles. The path algebra $kQ$ of $Q$ is the $k$-algebra with a basis consisting of all paths in $Q$ (including trivial paths), where multiplication is defined by path concatenation. If $Q$ is graded, then $kQ$ is naturally a graded $k$-algebra.

Let $Q$ be a gradable finite quiver and  $A = kQ/I$ with $I$ consisting of relations of length at least 3. For $i \in Q_{0}$, put $P_{i} = e_{i}A$, where $e_{i}$ is the trivial path at $i$. Let

$\cd^{\leq 0}=$ the smallest full subcategory of $\mathcal{D}^{b}(A)$ containing $P_{i}[t_{i}]$ and  closed under extensions, shift $[1]$ and direct summands;

$\cd^{\geq 0}=$ the smallest full subcategory of $\mathcal{D}^{b}(A)$ containing $P_{i}[t_{i}]$ and  closed under extensions, negative shift $[-1]$ and direct summands;

\begin{theorem}\rm \cite[Theorem 4.4]{Yang20}.
\label{thm:realization-functor-is-not-an-equivalence}
The pair $(\cd^{\leq 0}, \cd^{\geq 0})$ is a bounded t-structure on $\mathcal{D}^{b}(A)$, whose heart $\mathcal{B}$ is derived equivalent to $kQ$. Moreover, the embedding $\mathcal{B}\hookrightarrow \mathcal{D}^{b}(A)$
extends to a derived equivalence $\mathcal{D}^{b}(\mathcal{B})\rightarrow\mathcal{D}^{b}(A)$ unless $I = 0$.
\end{theorem}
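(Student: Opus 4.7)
The plan is to verify $(\cd^{\leq 0},\cd^{\geq 0})$ is a bounded $t$-structure with heart $\mathcal{B}\simeq\mod kQ$ (hence derived equivalent to $kQ$), and then to obstruct the realization functor when $I\neq 0$ by exhibiting higher $\mathcal{D}^b(A)$-Hom classes detected by the relations in $I$ that the hereditary $\mathcal{B}$ cannot capture.

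\emph{The $t$-structure and its heart.} Orthogonality $\Hom(\cd^{\leq 0},\cd^{\geq 1})=0$ reduces to checking on generators:
\[
\Hom(P_i[t_i+a],P_j[t_j-1-b])=\Hom_{\mathcal{D}^b(A)}(P_i,P_j[t_j-t_i-1-a-b])
\]
for $a,b\geq 0$. By projectivity of the $P_i$ this is nonzero only when the shift is zero, while by gradability of $Q$ any nonzero $\Hom_A(P_i,P_j)$ forces the shift to be strictly negative, so the Hom vanishes. Truncation triangles are built by filtering an arbitrary object along the $P_i[t_i]$, which generate $\mathcal{D}^b(A)$ as a triangulated category; this yields the bounded $t$-structure. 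The $P_i[t_i]$ lie in $\mathcal{B}$ and are its simples, and
\[
\Ext^1_{\mathcal{B}}(P_i[t_i],P_j[t_j])=\Hom_{\mathcal{D}^b(A)}(P_i,P_j[t_j-t_i+1])
\]
contributes a basis vector per arrow of $Q$ between $i$ and $j$, so the Gabriel quiver of $\mathcal{B}$ is $Q$. The analogous computation gives $\Hom_{\mathcal{D}^b(A)}(P_i,P_j[t_j-t_i+k])=0$ for $k\geq 2$, forcing $\Ext^{\geq 2}_{\mathcal{B}}$ to vanish on simples, so $\mathcal{B}$ is hereditary and equivalent to $\mod kQ$.

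\emph{The realization functor.} If $I=0$ then $A=kQ$ and the $t$-structure is (after a shift encoded by $t$) the standard one on $\mathcal{D}^b(kQ)$, so the realization functor is tautologically a triangle equivalence. Assume $I\neq 0$, fix a generating relation $r=\alpha_\ell\cdots\alpha_1\in I$ of length $\ell\geq 3$, and suppose towards a contradiction that the realization functor extends to an equivalence. Then $\Ext^n_{\mathcal{B}}(X,Y)\xrightarrow{\sim}\Hom_{\mathcal{D}^b(A)}(X,Y[n])$ for all $X,Y\in\mathcal{B}$ and $n\geq 0$, but the left side is zero for $n\geq 2$ by the heredity of $\mathcal{B}$. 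I would construct witnesses $X,Y\in\mathcal{B}$ as iterated extensions of the simples $P_i[t_i]$ along the support of $r$, so that they model (after a shift) the simple $A$-modules $S_{s(r)}$ and $S_{t(r)}$. The length-$\ell$ relation $r$ then appears as a nontrivial differential in the minimal projective $A$-resolution of $X$, producing a nonzero class in $\Hom_{\mathcal{D}^b(A)}(X,Y[n])$ for some $n\geq 2$ and contradicting the required isomorphism.

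The main technical difficulty is producing the witnesses $X,Y$: the simples $P_i[t_i]$ themselves are $A$-projective, so their own higher Hom's vanish and cannot detect the relation; one must exhibit explicit nontrivial $\mathcal{B}$-extensions, verify that they genuinely lie in the heart rather than in its shifts, and identify the precise cohomological degree $n\geq 2$ at which the minimal $A$-resolution of $X$ picks up the relation $r$.
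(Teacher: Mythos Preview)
The paper does not prove this theorem; it is quoted verbatim from \cite{Yang20} and used as a black box. So there is no ``paper's own proof'' to compare against, and your proposal stands or falls on its own.

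Your argument has a genuine gap in the heredity step. You correctly note that $\Ext^1_{\mathcal{B}}(P_i[t_i],P_j[t_j])\cong\Hom_{\mathcal{D}^b(A)}(P_i[t_i],P_j[t_j][1])$, since for any $t$-structure the realization map is an isomorphism in degrees $0$ and $1$. But you then write that the vanishing of $\Hom_{\mathcal{D}^b(A)}(P_i[t_i],P_j[t_j][k])$ for $k\geq 2$ ``forces $\Ext^{\geq 2}_{\mathcal{B}}$ to vanish on simples''. This inference is exactly what fails in general: the realization map $\Ext^n_{\mathcal{B}}(X,Y)\to\Hom_{\mathcal{D}^b(A)}(X,Y[n])$ need be neither injective nor surjective for $n\geq 2$, and indeed the entire point of the theorem is that it is \emph{not} an isomorphism when $I\neq 0$. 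So you cannot read off $\Ext^2_{\mathcal{B}}=0$ from the ambient Hom, and without that you have not shown $\mathcal{B}$ is hereditary, only that its Ext-quiver is $Q$. What is missing is an independent identification of the heart, for instance by recognising $\{P_i[t_i]\}$ as a simple-minded collection and passing through the Koenig--Yang correspondence to a silting object whose endomorphism algebra one computes directly, or by exhibiting the projective objects of $\mathcal{B}$ explicitly.

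Your obstruction argument is also incomplete, as you acknowledge: you need objects $X,Y\in\mathcal{B}$ with $\Hom_{\mathcal{D}^b(A)}(X,Y[n])\neq 0$ for some $n\geq 2$, and the shifted projectives themselves do not work. Your plan to build iterated extensions modelling simple $A$-modules is reasonable, but you have not carried it out, nor verified that the resulting objects lie in the heart, nor identified the degree $n$. Note also a circularity risk: once you actually prove $\mathcal{B}\simeq\mod kQ$ by some other route, the non-equivalence of the realization functor for $I\neq 0$ follows more cheaply by comparing global dimensions or by observing that $A=kQ/I$ and $kQ$ fail to be derived equivalent along this particular functor, rather than by hand-building witnesses.
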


In fact, Theorem \ref{thm:realization-functor-is-not-an-equivalence} implies that this phenomenon arises for all Dynkin types $\mathbb{D}_{n}~(n\geq 5)$ and $\mathbb{E}_{6,7,8}$, as shown by Yang. 
Motivated by this result, we focus on classifying silted algebras of hereditary algebras of type $\mathbb{D}_{n}~(n\geq 5)$ and provide additional examples of this phenomenon.

\begin{theorem} \label{thm:more-exmaples}
Let $A = kQ$ be the path algebra of the following quiver of type $\mathbb{D}_{n}$ (with  $n\geq 5$)
$$\begin{xy}
(-10,8)*+{1}="0",
(-10,-8)*+{2}="1",
(0,0)*+{3}="2",
(12,0)*+{\cdots}="3",
(28,0)*+{n-1}="4",
(44,0)*+{n}="5",
\ar"2";"0",\ar"2";"1", \ar"3";"2", \ar"4";"3", \ar"5";"4",
\end{xy}.$$
Let $P=\tau^{-1}P(1)\oplus\sum\limits_{i=5}^{n}\tau^{-1}P(i)\oplus I(2)\oplus I(n-1)\oplus I(n)[1]$, then $P$ is a 2-term silting complex of $K^{b}(\proj A)$, and its endomorphism algebra is derived equivalent to $A$. The heart $\mathcal{B}$ of the corresponding t-structure is derived equivalent to $\mod A$, but the embedding $\mathcal{B} \hookrightarrow \mathcal{D}^{b}(\mod A)$  does not extend to a derived equivalence.
\end{theorem}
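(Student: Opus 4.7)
The plan is to establish the four claims in sequence: (i) $P$ is a $2$-term silting complex with $|P|=n$; (ii) $P$ is in fact a tilting object of $\cd^b(\mod A)$, so $\End(P)$ is derived equivalent to $A$; (iii) the heart $\cb$ of the $t$-structure associated to $P$ is derived equivalent to $\mod A$; and (iv) the embedding $\cb\hookrightarrow\cd^b(\mod A)$ does not extend to a derived equivalence. Claim (iv) is obtained by identifying $\End(P)$ with a bound quiver algebra satisfying the hypotheses of Theorem~\ref{thm:realization-functor-is-not-an-equivalence}.

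For (i) and (ii), I would first count: $|P| = 1 + (n-4) + 3 = n$. Since $A$ is hereditary of type $\mathbb{D}_n$, the AR-quiver of $\cd^b(\mod A)$ is $\Z\overrightarrow{\mathbb{D}}_n$, and each summand of $P$---$\tau^{-1}P(1)$, $\tau^{-1}P(i)$ for $5\le i\le n$, the injectives $I(2)$ and $I(n{-}1)$, and the shifted injective $I(n)[1]$---has an explicit location in it. Using $\Ext^1_A(M,N)\cong D\Hom_A(N,\tau M)$ and the vanishing $\Hom_{\cd^b}(M,N[i])=0$ for $|i|\ge 2$ (valid because $A$ is hereditary), one verifies pairwise that $\Hom_{\cd^b}(P,P[i])=0$ for every $i\neq 0$. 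This establishes silting and tilting simultaneously, and Rickard's derived Morita theorem then yields (ii). For (iii), the silting/$t$-structure correspondence (Koenig--Yang, Aihara--Iyama) identifies $\cb$ with $\mod\End(P)$, which is derived equivalent to $\mod A$ by (ii).

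The main work is step (iv), which demands an explicit bound quiver presentation $\End(P)\cong kQ'/I'$. The vertices of $Q'$ are the $n$ summands of $P$ and the arrows are irreducible morphisms in $\add P$, readable from the meshes of the derived AR-quiver. The crucial structural claim is that $Q'$ is gradable (in particular has no oriented cycles, consistent with $\End(P)$ having finite global dimension) and that $I'$ is a nonzero admissible ideal generated by relations of length at least $3$. The inclusion of $I(n)[1]$---the unique summand lying outside $\mod A$---produces detour paths in the AR-quiver that encode long zero-composition relations in $\End(P)$, whereas the omission of $\tau^{-1}P(i)$ for $i=2,3,4$ prevents any short length-$2$ relation from appearing. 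Once $(Q',I')$ is determined with these properties, Theorem~\ref{thm:realization-functor-is-not-an-equivalence} applies directly (with $kQ'/I'$ in place of $A$ therein) to yield (iv).

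The main obstacle is step (iv): the combinatorial bookkeeping needed to pin down $(Q',I')$ and, most importantly, to verify that every generating relation of $I'$ has length at least $3$. This is where the specific choice of $P$---the decision to shift $I(n)$ and to apply $\tau^{-1}$ selectively to the outer projectives---is essential. A base-case analysis (say $n=5$) followed by a uniform argument across $n\ge 5$, using the techniques of effective intersection from Section~\ref{secsec:strictly-shod-algebras} to rule out length-$2$ relations, should close the argument.
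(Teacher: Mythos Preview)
Your overall strategy is viable, but it takes a significantly longer route than the paper. The paper's key observation, which you miss, is that $P=\tau^{-1}M$ where $M=P(1)\oplus\bigoplus_{i=5}^{n}P(i)\oplus\tau I(2)\oplus\tau I(n{-}1)\oplus I(n)$ is a genuine tilting \emph{module} over $A$ (obtained via the Happel--Ringel construction from a tilting module over the $A_3$-subquiver on vertices $2,3,4$). Since $\tau^{-1}$ is an autoequivalence of $\cd^b(\mod A)$, this single identification simultaneously gives (i), (ii), and an isomorphism $\End(P)\cong\End(M)$. Because $M$ is an honest tilting module, its endomorphism algebra is a tilted algebra whose quiver and relations are read off directly from the positions of the summands in the AR-quiver of $\mod A$: the underlying quiver is $\mathbb{D}_n$ with linear orientation and there is exactly one relation $\alpha\beta\gamma=0$ of length~$3$. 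Theorem~\ref{thm:realization-functor-is-not-an-equivalence} then applies immediately. Your plan, by contrast, proposes to check silting/tilting by brute pairwise $\Hom$-computations and then to reconstruct $(Q',I')$ from scratch; this works in principle but is considerably heavier, and you never exploit the fact that $\End(P)$ is a \emph{tilted} algebra.

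One concrete misstep: your proposed use of the effective-intersection technique from Section~\ref{secsec:strictly-shod-algebras} to ``rule out length-$2$ relations'' is the wrong tool. Effective intersection computes the global dimension of a bound path algebra \emph{given} its relations; it does not help you determine what the generating relations are in the first place. To find $I'$ you must either compute $\End(P)$ directly or, as the paper does, transport the problem to the tilting module $M$.
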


\begin{proof} By the proof of \cite[Proposition 2.1]{HappelRingel81}, tilting modules induced by the idempotents $e_{1}$, $e_{5},\ldots,e_{n}$ over $A$ have the form $P(1)\oplus\sum\limits_{i=5}^{n}P(i)\oplus\tau^{-1}T$, where $T$ is a tilting module over $A'=kQ'$ with $Q'$ being the subquiver: $\begin{xy}
(-10,0)*+{2}="0",
(0,0)*+{3}="1",
(10,0)*+{4}="2",
\ar"1";"0",\ar"2";"1",
\end{xy}$
In particular, take the tilting module $e_{2}A^{\prime}\oplus e_{3}A^{\prime}\oplus \tau^{-1}e_{4}A^{\prime}$ over $A^{\prime}$. Using the AR-quiver of $\mod \Lambda_{n}$, this tilting module corresponds to $\tau^{2} I(2)\oplus \tau^{2} I(n-1)\oplus \tau I(n)$ in $\Lambda_{n}$. Thus, the corresponding tilting module over $A$ is $P(1)\oplus\sum\limits_{i=5}^{n}P(i)\oplus \tau I(2)\oplus \tau I(n-1)\oplus I(n)$. It follows that $P=\tau^{-1}P(1)\oplus\sum\limits_{i=5}^{n}\tau^{-1}P(i)\oplus I(2)\oplus I(n-1)\oplus I(n)[1]$ is a 2-term silting complex in $K^{b}(\proj A)$, with endomorphism algebra:
$$\begin{xy}
(-10,8)*+{1}="0",
(-10,-8)*+{2}="1",
(0,0)*+{3}="2",
(12,0)*+{4}="3",
(24,0)*+{5}="4",
(36,0)*+{\cdots}="5",
(48,0)*+{n}="6",
(-8,7)*+{ }="7",
(20,0)*+{ }="8",
 \ar@/_1.2pc/@{.}"8";"7",
\ar^{\gamma}"2";"0",\ar"2";"1", \ar^{\beta}"3";"2", \ar^{\alpha}"4";"3", \ar"5";"4",\ar"6";"5",
\end{xy}$$ satisfying $\alpha\beta\gamma=0$. Then by Theorem \ref{thm:realization-functor-is-not-an-equivalence}, the endomorphism algebra is derived equivalent to $A$. The heart $\mathcal{B}$ of the corresponding t-structure is derived equivalent to $\mod A$ but the embedding $\mathcal{B} \hookrightarrow \mathcal{D}^{b}(\mod A)$ does not extend to a derived equivalence.
\end{proof}

\begin{theorem} \label{thm:more-examples-of-quiver-mutation}

Let $A = kQ$ be the path algebra of the following quiver of type $\mathbb{D}_{n}$ (with  $n\geq 5$)
$$\begin{xy}
(-10,8)*+{1}="0",
(-10,-8)*+{2}="1",
(0,0)*+{3}="2",
(12,0)*+{\cdots}="3",
(28,0)*+{n-1}="4",
(44,0)*+{n}="5",
\ar"2";"0",\ar"2";"1", \ar"3";"2", \ar"4";"3", \ar"4";"5",
\end{xy}.$$
Let $P=\tau^{-2}P(1)\oplus\sum\limits_{i=5}^{n}\tau^{-2}P(i)\oplus I(2)[1]\oplus I(n-1)[1]\oplus I(n)[1]$, then $P$ is a 2-term silting complex of $K^{b}(\proj A)$, and its endomorphism algebra is derived equivalent to $A$. The heart $\mathcal{B}$ of the corresponding t-structure is derived equivalent to $\mod A$, but the embedding $\mathcal{B} \hookrightarrow \mathcal{D}^{b}(\mod A)$  does not extend to a derived equivalence.
\end{theorem}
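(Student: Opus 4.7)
The plan is to adapt the strategy of Theorem~\ref{thm:more-exmaples} (the analogous result for $\Lambda_n$) to the mutated quiver $Q'$ defining $\Gamma_n$. As in that proof, the silting complex $P$ should arise from a tilting module over $A = \Gamma_n$ by applying the inverse Auslander--Reiten translate; the exponent $-2$ (in place of $-1$) and the extra shifts $[1]$ on the three injective summands reflect the reflection of the orientation at vertex $n$ in passing from $Q$ to $Q'$.

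First, using the argument behind \cite[Proposition~2.1]{HappelRingel81}, I would identify the tilting modules over $A$ induced by the idempotents $e_1, e_5, \ldots, e_n$: they take the form $M = P(1) \oplus \bigoplus_{i=5}^{n}P(i) \oplus \tau^{-2}_A T$, where $T$ is a tilting module over the path algebra $A'$ of the subquiver $4 \to 3 \to 2$ (the same subquiver as in the $\Lambda_n$ case, since deleting the same set of vertices from $Q'$ yields the same quiver). A specific choice of $T$, analogous to the one used in Theorem~\ref{thm:more-exmaples}, is tracked through the AR-quiver of $\mod \Gamma_n$; the image $\tau^{-2}_A M$, interpreted in the derived category via the standard identities relating $\tau^{-1}$ and $[1]$ on non-projective modules, should reproduce $P$ as stated. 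Since $A$ is hereditary, $\tau^{-1}$ is an auto-equivalence of $K^b(\proj A)$, so $\tau^{-2}M$ is a tilting complex; in particular, $P$ is a silting complex and $\End(P)$ is derived equivalent to $A$ by Rickard's theorem. Inspection of the AR-quiver of $K^{[-1,0]}(\proj \Gamma_n)$ (an analogue of Figure~4 adapted via the BGP reflection at vertex $n$) confirms that $P$ lies in $K^{[-1,0]}(\proj A)$, hence is 2-term.

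Second, I would compute the quiver $\widetilde{Q}$ with relations of $B = \End(P)$ by analysing morphisms between summands of $P$. The underlying quiver is of Dynkin type $\mathbb{D}_n$ (by the derived equivalence), and the key point is to exhibit a non-trivial zero relation $\alpha_1 \alpha_2 \cdots \alpha_k = 0$ with $k \geq 3$ arising from a chain of irreducible morphisms passing through the shifted injective summands $I(2)[1], I(n-1)[1], I(n)[1]$. The extra factor of $\tau^{-1}$ in the definition of $P$ (compared with Theorem~\ref{thm:more-exmaples}, where the relation has length exactly $3$) should produce a relation that is at least one step longer. Finally, Theorem~\ref{thm:realization-functor-is-not-an-equivalence} applied to $B = k\widetilde{Q}/\widetilde{I}$ with non-trivial $\widetilde{I}$ consisting of relations of length $\geq 3$ yields a bounded t-structure on $\cd^b(B) \simeq \cd^b(A)$ whose heart $\mathcal{B}$ is derived equivalent to $\mod A$ but whose embedding into $\cd^b(A)$ does not extend to a derived equivalence, as required.

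The main obstacle is the second step---explicitly computing $\End(P)$ and verifying that the resulting zero relation has length at least $3$. Careful tracking of morphisms in the AR-quiver of $K^{[-1,0]}(\proj \Gamma_n)$ is required, particularly the compositions involving the mixed summands $\tau^{-2}P(i)$ and $I(j)[1]$; one must confirm that enough consecutive compositions fail to be zero so that the final zero relation is genuine (and non-trivial in the sense required by Theorem~\ref{thm:realization-functor-is-not-an-equivalence}). A secondary subtlety is making precise the interpretation of $\tau^{-2}_A T$ in the first step, since the projective-injective module in $\mod A'$ has trivial inverse AR-translate within $A'$ but lifts to a non-trivial operation when viewed inside $\mod A$ via the embedding of the subquiver.
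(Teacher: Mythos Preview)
Your proposal is correct and follows the paper's approach: the paper's proof of Theorem~\ref{thm:more-examples-of-quiver-mutation} consists of the single sentence ``The proof is similar to that of Theorem~\ref{thm:more-exmaples},'' and your plan is precisely to carry out that adaptation in detail.

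One minor caution: your expectation that the zero relation in $\End(P)$ should be ``at least one step longer'' than the length-$3$ relation $\alpha\beta\gamma=0$ from Theorem~\ref{thm:more-exmaples} is speculative and need not hold. The extra factor of $\tau^{-1}$ shifts the position of $P$ inside the AR-quiver but does not by itself change the isomorphism type of the endomorphism algebra (since $\tau$ is an autoequivalence of $K^b(\proj A)$); what matters is only that the resulting relation has length $\geq 3$ so that Theorem~\ref{thm:realization-functor-is-not-an-equivalence} applies. You have correctly identified this explicit computation as the main remaining work.
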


\begin{proof}
The proof is similar to that of Theorem \ref{thm:more-exmaples}.
\end{proof}

\begin{remark}
By Remark \ref{rem:AR-quiver-parts}, tilting modules induced by the idempotents $e_{1},e_{5},\ldots,e_{n}$
and those induced by $e_{2},e_{5},\ldots,e_{n}$ have isomorphic endomorphism algebras. Consequently, 
$$P=\tau^{-1}P(2)\oplus \sum\limits_{i=5}^{n}\tau^{-1}P(i)\oplus I(1)\oplus I(n-1)\oplus I(n)[1]$$
 is also a 2-term silting complex exhibiting the phenomenon above. Similarly, 
 $$P=\tau^{-2}P(2)\oplus \sum\limits_{i=5}^{n}\tau^{-2}P(i)\oplus I(1)[1]\oplus I(n-1)[1]\oplus I(n)[1]$$ is also a 2-term silting complex with the same property.
\end{remark}

\begin{remark}
In \cite[Example 5.7]{Yang20}, Yang studied this phenomenon for the quiver algebra $A=kQ$, where $Q$ is the quiver of type $\mathbb{D}_{n}~(n\geq 5)$ and proved that there exists a silting object in $K^{b}(\proj A)$ whose endomorphism algebra is derived equivalent to $A$. Moreover, the heart $\mathcal{B}$ of the corresponding t-structure is derived equivalent to $\mod A$ but the embedding $\mathcal{B} \hookrightarrow \mathcal{D}^{b}(\mod A)$  does not extend to a derived equivalence. Theorem \ref{thm:more-exmaples} explicitly describes the form of a class of such silting objects.
\end{remark}

\def\cprime{$'$}
\providecommand{\bysame}{\leavevmode\hbox to3em{\hrulefill}\thinspace}
\providecommand{\MR}{\relax\ifhmode\unskip\space\fi MR }
% \MRhref is called by the amsart/book/proc definition of \MR.
\providecommand{\MRhref}[2]{%
  \href{http://www.ams.org/mathscinet-getitem?mr=#1}{#2}
}
\providecommand{\href}[2]{#2}

%\bibliographystyle{amsplain}
%\bibliography{stanYang}

\end{document}